\newtheorem{theorem}{Theorem}[section]
\newtheorem{claim}[theorem]{Claim}
\newtheorem{lemma}[theorem]{Lemma}
\newtheorem{corollary}[theorem]{Corollary}
\theoremstyle{definition}\newtheorem{definition}[theorem]{Definition}
\newtheorem{example}[theorem]{Example}
\newtheorem{proposition}[theorem]{Proposition}
\theoremstyle{definition}
\theoremstyle{definition}
\theoremstyle{definition}\newtheorem{remark}[theorem]{Remark}
\theoremstyle{definition}\newtheorem{convention}[theorem]{Convention}
\theoremstyle{definition}
\newcommand{\al}{\alpha}
\newcommand{\ka}{\kappa}
\newcommand{\sig}{\sigma}
\newcommand{\Sig}{\Sigma}
\newcommand{\bZ}{\mathbb{Z}}
\newcommand{\bF}{\mathbb{F}}
\newcommand{\bN}{\mathbb{N}}
\newcommand{\PGL}{\operatorname{PGL}}
\newcommand\wh[1]{\widehat{#1}}
\newcommand\set[1]{\left\{#1\right\}}
\newcommand\pa[1]{\left(#1\right)}
\newcommand\idist[1]{\left\langle#1\right\rangle}
\newcommand\av[1]{|#1|}
\newcommand\on[1]{\operatorname{#1}}
\newcommand\mb[1]{\mathbf{#1}}
\newcommand\mat[1]{\pa{\begin{matrix}#1\end{matrix}}}
\newcommand\br[1]{\left[#1\right]}
\newcommand\smallmat[1]{\pa{\begin{smallmatrix}#1\end{smallmatrix}}}
\newcommand{\Mat}{\operatorname{Mat}}
\newcommand{\lra}{\longrightarrow}
\newcommand{\onto}{\xymatrix{\ar@{>>}[r]&}}
\newcommand{\usnote}[1]{\marginpar{\color{cyan}\tiny [US] #1}}
\newcommand{\red}[1]{\textcolor{red}{#1}}
\newcommand {\ignore}[1]  {}
\newcommand{\tilo}{\mb{o}}
\newcommand{\tilec}[1]{\mb{c}_{#1}}
\newcommand{\tilea}{\mb{a}}
\newcommand{\tileb}{\mb{b}}
\newcommand{\tiled}{\mb{d}}
\newcommand{\tm}{\alpha}
\newcommand{\splus}[1]{\tilea_{#1}}
\newcommand{\sminus}[1]{\tileb_{#1}}
\newcommand{\tiler}[1]{\tiled_{\mb{#1}}}
\def\F{\mathbb{F}}
\def\deg{\mathrm{deg}}
\def\sep{\,:\,}
\def\sub{\subseteq}
\def\defeq{:=}
\def\eqdef{=:}
\def\es{e}
\def\esc{E}
\def\height{h}
\def\bbf{\F}
\def\fq{\F_q}
\def\fqt{\fq[t]}
\def\fqtt{\fq(t)}
\def\fqttt{\fq\left(\left(t^{-1}\right)\right)}
\def\ft{\F_2}
\def\fttt{\ft(t)}
\def\ftttt{\ft\left(\left(t^{-1}\right)\right)}
\def\bbn{\mathbb{N}}
\def\bbz{\mathbb{Z}}
\def\frakh{\mathfrak{H}}
\def\frakt{\mathfrak{T}}
\def\frakf{\mathfrak{F}}
\tikzset{
	splus/.pic = {
		\foreach \i in {0,4}{
			\draw (0,\i) -- (4,\i);
			\draw (\i,0)-- (\i,4);
		}
		\draw (4,0)-- (0,4);
		\fill[fill=black] (0,0)--(0,2)--(1,3)--(3,1)--(2,0);
	}
}
\tikzset{
	sminus/.pic = {
		\foreach \i in {0,4}{
			\draw (0,\i) -- (4,\i);
			\draw (\i,0)-- (\i,4);
		}
		\draw (4,0)-- (0,4);
		\fill[fill=black] (0,2)--(1,3)--(0,4);
		\fill[fill=black] (2,0)--(3,1)--(4,0);
	}
}
\tikzset{
	tilera/.pic = {
		
		\foreach \i in {0,4}{
			\draw (0,\i) -- (4,\i);
			\draw (\i,0)-- (\i,4);
		}
		\draw (2,0)-- (0,2);
		\draw (2,0)-- (4,2);
		\draw (2,4)-- (0,2);
		\draw (2,4)-- (4,2);
		\fill[fill=black] (0,0)--(1,1)--(0,2);
		\fill[fill=black] (2,4)--(1,3)--(0,4);
		\fill[fill=black] (2,0)--(3,1)--(4,0);
		\fill[fill=black] (4,2)--(3,3)--(4,4);
	}
}
\tikzset{
	tilerb/.pic = {
		\foreach \i in {0,4}{
			\draw (0,\i) -- (4,\i);
			\draw (\i,0)-- (\i,4);
		}
		\draw (2,0)-- (0,2);
		\draw (2,0)-- (4,2);
		\draw (2,4)-- (0,2);
		\draw (2,4)-- (4,2);
		\fill[fill=black] (0,0)--(1,1)--(2,0);
		\fill[fill=black] (0,2)--(1,3)--(0,4);
		\fill[fill=black] (4,2)--(3,1)--(4,0);
		\fill[fill=black] (2,4)--(3,3)--(4,4);
	}
}
\tikzset{
	tilec/.pic = {
		\foreach \i in {0,4}{
			\draw (0,\i) -- (4,\i);
			\draw (\i,0)-- (\i,4);
		}
		\fill (0,0) circle (0.3);
	}
}
\begin{document}
\title{Escape of mass of the Thue-Morse sequence}

\author{Erez Nesharim}
\email{nesharim@technion.ac.il}
\address{Department of Mathematics, Technion - Israel Institute of Technology, Haifa, Israel}

\author{Uri Shapira}
\email{ushapira@technion.ac.il}
\address{Department of Mathematics, Technion - Israel Institute of Technology, Haifa, Israel}

\author{Noy Soffer Aranov}
\email{noysofferaranov@math.utah.edu}
\address{Department of Mathematics, University of Utah, Salt Lake City, Utah, USA}

\maketitle
\begin{abstract}
	Every Laurent series in $\fqttt$ has a continued fraction expansion whose partial quotients are polynomials. De Mathan and Teuli\'e proved that the degrees of the partial quotients of the left-shifts of every quadratic Laurent series are unbounded. Shapira and Paulin and Kemarsky improved this by showing that certain sequences of probability measures on the space of lattices in the plane $\fqttt^2$ exhibit positive escape of mass and conjectured that this escape is full -- that is, that these probability measures converge to zero. We disprove this conjecture by analysing in detail the case of the Laurent series over $\ft$ whose sequence of coefficients is the Thue-Morse sequence. The proof relies on 
	the discovery of explicit symmetries in its number wall.
\end{abstract}
	
\paragraph{}
\begin{center}
	\emph{Dedicated to Gozal Nesharim on his \nth{75} birthday}
\end{center}



\section{Introduction}
In this paper we study in detail a certain tiling of $\bbn^2$ by $\bF_2$ that is obtained by simple geometric means from 
the \emph{number wall} of the Thue-Morse sequence and establish its automaticity in an explicit manner. 
This tiling is tightly related to questions in Diophantine approximation 
of quadratic Laurent series in the local field $\ftttt$ and to the properties of an orbit of the 
full diagonal group in $\PGL_2\left(\ftttt\times \bF_2((t))\right)$ acting on the homogeneous space 
\begin{equation}\label{eq:homogeneous}
	\PGL_2\left(\ftttt\times \bF_2((t))\right)/\PGL_2\left(\bF_2\br{t,t^{-1}}\right).
\end{equation}
Our analysis of the tiling allows us to disprove certain statements in the Diophantine side that were widely believed to hold and, in that respect, are surprising. Consequently, this paper may be of interest to three groups of mathematicians with different mathematical backgrounds: automatic tilings, Diophantine approximation, and homogeneous dynamics. 

We split the introduction into two parts. We begin by explaining briefly the implication of the analysis carried out to Diophantine approximation and homogeneous dynamics. We then move to defining the necessary terminology in the theory of tilings and stating our main result.

\subsection{Partial escape of mass}

Let $q$ be a prime power. Every Laurent series $\theta\in\fqttt$ has a unique continued fraction expansion 
$$
\theta=a_0+\cfrac{1}{a_1+\cfrac{1}{a_2+\cdots\vphantom{\cfrac{1}{1}}}}\eqdef[a_0;a_1,a_2,\dots]\,,
$$
where $a_k=a_k(\theta)\in\fqt$ are polynomials for every $k \geq 0$, and
\begin{equation}\label{eq:dk}
	d_k = d_k(\theta) \defeq \deg\left(a_k\right)
\end{equation} 
for every $k \geq 1$ satisfies $d_k \geq 1$. It also has a unique base-$t$ expansion
$$
\theta = \sum_{n=-\infty}^\infty b_n t^{-n}\,,
$$
where $b_n=b_n(\theta)\in\fq$ for every $n\in\bbz$, and only finitely many of the coefficients of positive powers of $t$ are nonzero. Multiplication by $t$ changes the base-$t$ expansion by a left-shift. Therefore, for every $j,n\in\bbz$ the base-$t$ expansion of $t^j\theta$ satisfies \begin{equation}\label{eq:base-t-expansion}
	b_n\left(t^j\theta\right) = b_{n+j}\left(\theta\right).
\end{equation} 
It is natural to ask how $a_k\left(t^j\theta\right)$ vary as $k,j\rightarrow \infty$.

A very well studied scenario is when $\theta$ is quadratic over $\fqtt$.
De Mathan and Teuli\'e proved \cite[Theorem 4.5]{dMT} that
\begin{equation*}
    \limsup_{k,j\rightarrow\infty}d_k\left(t^j\theta\right)=\infty\,.
\end{equation*}
Paulin and Shapira \cite{PS}, relying on an earlier work with Kemarsky \cite{KPS},
improved this result by showing that, on average, a positive proportion of the degrees are unbounded, in the following sense:

\begin{definition}\label{def:escape}
	For any $\theta\in\fqttt$, $d>0$ and $k\geq1$, the \emph{mass of $\theta$ which lies above $d$ up to the $k$\textsuperscript{th} partial quotient} is 
	\begin{equation}\label{eq:escape}
		\es_\theta(d,k) \defeq \frac{\sum_{k'=1}^{k}\max\left\{d_{k'}-d,0\right\}}
		{\sum_{k'=1}^{k}d_{k'}}\,.
	\end{equation}
\end{definition}
\noindent
Their result \cite[Theorem 1.1]{PS} implies that if $q$ is not a power of $2$ and $\theta$ is quadratic over $\fqt$, then 
\begin{equation}\label{eq:uniform_escape}
\lim_{d\to\infty}\liminf_{j\rightarrow\infty}\lim_{k\to\infty}\es_{t^j\theta}(d,k) > 0\,.
\end{equation}

From the homogeneous dynamics point of view which is described in \cite{KPS} (and the analogous discussion in \cite{AkaShapira} in the real setup), every quadratic $\al\in \bF_q((t^{-1}))$ corresponds to a probability measure $\nu_\al$ on the homogeneous space of $\bF_q[t]$-\emph{lattices} $X=\PGL_2(\bF_q((t^{-1})))/\PGL_2(\bF_q[t])$. The meaning of the limit in~\eqref{eq:uniform_escape} being equal to $c$ is that any weak* accumulation point $\nu$ of the sequence of measures $\left\{\nu_{t^j\theta}\right\}_{j=1}^\infty$ satisfies $\nu(X)<1-c$. From this point of view, this paper is focused on the understanding of the measures $\left\{\nu_{t^j\theta}\right\}_{j=1}^\infty$. 

Our analysis will allow the computation of the limit in \eqref{eq:uniform_escape} for a particular example of $\theta$ which is of independent interest. Let $\tilea$ and $\tileb$ be two symbols and define a substitution $\mu:\{\tilea,\tileb\}\to\{\tilea,\tileb\}^2$ by \begin{align}\label{eq:tm-substitution}
	\mu(\tilea)&=\begin{array}{cc} \tilea&\tileb\end{array},\quad 
	&\mu(\tileb)&= \begin{array}{cc} \tileb&\tilea\end{array}.
\end{align} 
Consider the sequence of finite strings of $\tilea$'s and $\tileb$'s obtained by starting with the string $\tilea$ and applying $\mu$ iteratively to it:
$$\tilea\overset{\mu}{\lra}\tilea\tileb\overset{\mu}{\lra}\tilea\tileb\tileb\tilea\overset{\mu}{\lra}\tilea\tileb\tileb\tilea\tileb\tilea\tilea\tileb\overset{\mu}{\lra}\tilea\tileb\tileb\tilea\tileb\tilea\tilea\tileb\tileb\tilea\tilea\tileb\tilea\tileb\tileb\tilea\dots$$
The reader can easily convince themself by induction that for any $l\ge l'\ge 0$, the first $2^{l'}$ symbols in the sequence $\mu^l(\tilea)$ do not depend on $l$. This means that there is a well defined limit of this process in $\set{\tilea,\tileb}^\bN$.
This limit, which we denote by $\mu^\infty(\tilea)$,
is 
the \emph{Thue-Morse sequence}. 
Define a coding $\tau:\{\tilea,\tileb\}\to\{0,1\}$ by 
\begin{align}\label{eq:tm-coding}
	\tau(\tilea)&=0\,,\quad 
	&\tau(\tileb)&= 1\,.
\end{align}
The map $\tau$ extends naturally to strings of $\tilea$'s and $\tileb$'s. Denote 
\begin{equation}\label{eq: Thue-Morse}
\tm = \{\tm_n\}_{n=1}^\infty \defeq \tau \left(\mu^{\infty}(\tilea)\right)\,,
\end{equation}
and use the same letter for the corresponding Laurent series in $\ftttt$
$$
\tm = \sum_{n=1}^\infty\tm_nt^{-n}\,.
$$
It is 
well-known that
$\tm$ is quadratic over $\fttt$ (see \cite{AllSha}).
Our main application is
the following result, which disproves \cite[Conjecture 6]{KPS}, where it is predicted that the limit in~\eqref{eq:main} is $1$:
\begin{theorem}\label{thm:main}
	\begin{equation}\label{eq:main}
	\lim_{d\to\infty}\liminf_{j\rightarrow\infty}\lim_{k\to\infty}\es_{t^j\tm}(d,k) = \sfrac{2}{3}\,.
	\end{equation}
\end{theorem}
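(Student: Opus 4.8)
The plan is to translate the statement about continued fractions of $t^j\tm$ into a combinatorial statement about the \emph{number wall} of the Thue-Morse sequence, and then to exploit the explicit self-similar structure of that wall. First I would recall the classical dictionary (due to De Mathan--Teuli\'e and used in \cite{KPS,PS}) relating the degrees $d_k(t^j\theta)$ of partial quotients to the sizes and positions of the zero-blocks (``windows'') in the number wall of the coefficient sequence $\{b_n(\theta)\}$: the number wall is the bi-infinite array of Hankel-type Toeplitz determinants formed from the coefficients, and the theory of ``frames'' shows that its zero set is a disjoint union of square windows whose side lengths, read along the appropriate diagonal/anti-diagonal, encode exactly the degrees $d_k$ and the ``gaps'' between consecutive best approximations. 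Under the left-shift $\theta\mapsto t^j\theta$, by \eqref{eq:base-t-expansion} the relevant strip of the wall simply slides, so the quantity $\es_{t^j\tm}(d,k)$ becomes, up to normalization, a ratio of the form $\sum \max(w-d,0)/\sum w$ where $w$ ranges over the window-sizes encountered along a horizontal line of height $\sim j$ and length $\sim k$ in the Thue-Morse number wall.

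Next I would invoke the structural heart of the paper: the explicit description of the Thue-Morse number wall as an automatic tiling of $\bbn^2$ (the tiles being $\splus{}$, $\sminus{}$, $\tilera$, $\tilerb$, etc., as set up in the TikZ macros) together with the ``explicit symmetries'' advertised in the abstract. Concretely, I expect a lemma earlier in the paper that identifies, for each row $n$ (equivalently each shift $j$), the multiset of window sizes along that row and shows it is governed by the $2$-adic valuation structure of $n$ — roughly, windows of size $2^m$ occur with a frequency dictated by a renewal/substitution rule, so that the \emph{size-biased} distribution of window sizes along a typical row converges, as $k\to\infty$, to an explicit probability distribution $\pi$ on $\{2^m : m\ge 0\}$ (or on a slightly richer set of sizes) with $\sum_m \pi(2^m) = 1$. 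Then $\lim_{k\to\infty}\es_{t^j\tm}(d,k)$ is, for $j$ in a suitable set of full lower density, the expectation $\mathbb{E}_\pi[\max(W-d,0)]/\mathbb{E}_\pi[W]$; taking $\liminf_{j\to\infty}$ removes the atypical shifts, and then $\lim_{d\to\infty}$ of $\mathbb{E}_\pi[\max(W-d,0)]/\mathbb{E}_\pi[W]$ is computed from the tail of $\pi$.

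The arithmetic that makes $\sfrac{2}{3}$ appear is then a direct computation: if windows of size $2^m$ contribute to the denominator with total weight proportional to $2^m \cdot (\text{density of such windows})$, and the densities decay geometrically so that, say, the size-$2^m$ windows occupy a fraction $\propto 2^{-m}$ of the row while each contributes length $2^m$, one gets that arbitrarily large windows carry a fixed positive proportion of the total mass. I would organize this as: (i) show $\mathbb{E}_\pi[W]$ is finite and equals an explicit constant; (ii) show $\mathbb{E}_\pi[\max(W-d,0)] = \mathbb{E}_\pi[W] - \sum_{w\le d} w\,\pi(w) - d\,\mathbb{P}_\pi(W>d)$ and let $d\to\infty$ along powers of $2$; (iii) verify the ratio tends to $\sfrac{2}{3}$. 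The liminf over $j$ must be shown to be attained (not strictly smaller) by proving a uniform lower bound: every shift $j$, even the ``worst'' ones, still sees windows of unbounded size with the required density — this is where the symmetries of the wall, rather than just ergodic/generic arguments, are essential.

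The main obstacle I anticipate is step (ii)–(iii) done \emph{uniformly in $j$}, i.e. controlling the $\liminf_{j\to\infty}$: the per-row window statistics genuinely depend on the $2$-adic structure of $n\sim j$, and one must show that the fluctuation between ``good'' rows and ``bad'' rows does not depress the liminf below $\sfrac{2}{3}$. Establishing this requires the precise combinatorial symmetry lemmas for the Thue-Morse wall (pinning down where large windows sit in \emph{every} row, via the substitution/automaticity), and then a careful but elementary counting argument; the continued-fraction/number-wall dictionary and the final geometric-series computation are, by contrast, routine once the tiling is in hand. A secondary technical point is the interchange of the three limits in \eqref{eq:main} — one must check that the inner $\lim_{k\to\infty}$ exists for each fixed $j$ (it does, by the automaticity giving convergent Cesàro averages of window sizes) before the $\liminf_j$ and $\lim_d$ are taken.
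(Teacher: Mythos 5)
Your opening move --- converting $\es_{t^j\tm}(d,k)$ into a statement about runs of zeros along the $j$\textsuperscript{th} column of the number wall of $\tm$ --- is exactly what the paper does (Proposition~\ref{prop:escape} and \eqref{eq:follows}), and your remark that the inner limit in $k$ exists by automaticity is correct: each column of $\frakf(\tm)$ is in fact periodic. But the central mechanism you propose for producing $\tfrac{2}{3}$ does not match the actual structure and, as stated, would not even yield a positive answer. You posit a single $j$-independent size-biased distribution $\pi$ on window sizes, valid for typical $j$, with $\liminf_{j}$ ``removing the atypical shifts'', and you suggest the constant arises from a geometric tail in which windows of size $2^m$ occupy a fraction $\propto 2^{-m}$ of the column. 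Two problems. First, if each dyadic scale occupied a geometrically decaying fraction of the column, the mass lying above height $d$ would be $\approx\sum_{2^m>d}2^{-m}=O(1/d)\to0$, giving a limit of $0$, not $\tfrac{2}{3}$; for the answer to be positive one needs that, for fixed $d$ and $j\to\infty$, a \emph{non-vanishing} proportion of the column is covered by windows of size $>d$, which is a statement about a single dominant scale growing with $j$, not about a stationary distribution over scales. Second, the window statistics of column $j$ genuinely depend on $j$ and do not converge to one $\pi$: the $\liminf$ is attained along the special shifts $j=2^{l+2}$, while other shifts (e.g.\ the $j_l$ of Corollary~\ref{cor}) give limits as large as $1$. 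So the $\liminf$ is not a matter of discarding exceptional $j$; it is an infimum over genuinely different limit values, and identifying where it is attained is the whole point.

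What actually produces $\tfrac{2}{3}$ is an exact three-block decomposition rather than a series summation: the $j$\textsuperscript{th} column of $\frakf(\tm)$ is periodic, and its period decomposes (up to a cyclic shift) into one third that is a column of $\ka_1\left(\sig^l\left(\tiler{a}\right)\right)$ and two thirds that form a column of $\ka_1\left(\sig^{l+1}\left(\tiler{a}\right)\right)$, giving $\lim_{k\to\infty}\es_{t^j\tm}(d,k)=\tfrac13\esc_{l,j}(d)+\tfrac23\esc_{l+1,j}(d)$ (Theorem~\ref{thm:escape}). For $j=2^{l+2}$ the first term vanishes (that column grazes the boundary of the scale-$l$ diamond of zeros, so it contains no long zero runs) and the second tends to $1$ (that column passes through the centre of the scale-$(l+1)$ diamond, which is essentially one window filling the whole block), whence $\tfrac13\cdot0+\tfrac23\cdot1=\tfrac{2}{3}$. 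Your proposal is missing both this decomposition and the uniform-in-$j$ lower bound showing that no other sequence of shifts does worse; you correctly flag the latter as the main obstacle, but the probabilistic framework you set up does not supply the tools (the periodicity lemmas for the columns, the explicit diamond structure of $\ka_1\left(\sig^l\left(\tiler{a}\right)\right)$, and the gluing lemma for escape of mass) needed to resolve it.
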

\noindent

\subsection{Tilings and diagonally-aligned number walls}
We now introduce some terminology in order to state our main Theorem~\ref{thm:main2}, which 
implies Theorem~\ref{thm:main}. We refer the reader to \cite[Section 4]{ALN} for a more thorough exposition of this terminology. 

Let $\Sig$ be a finite set and refer to its elements as \emph{tiles}. A 
tiling of a subset $S\sub \bZ^2$ by $\Sig$
is simply a map $T:S\to \Sig$. We think of the tiling as a planar array of tiles where the tile $T(m,n)$ is located at position $(m,n)$. Note that the way we arrange $\bZ^2$ in the plane is \textbf{not} the standard way but the one consistent with matrix notation. That is, as the row index $m$ increases the position advances \emph{downwards}, and as the column index $n$ grows the position advances \emph{rightwards}.

\ignore{
The study of the Thue-Morse sequence has a long history. Thue \cite{Thue1912} showed that for every $j\geq0$, $l>0$ and $u\in\{0,1\}^l$ there exists $1\leq n\leq 2l+1$ such that $\tm_{j+n}\neq u_{n \mod l}$ (see \cite{Berstel} for an exposition of Thue's work). In other words, the sequence $\tm$ has the property that all its left-shifts are poorly approximable by periodic sequences. Hence, it is natural to study approximations of $\left\{t^j\tm\right\}_{j=0}^\infty$ by rational functions in $\fttt$.
}

Let  
\begin{equation}\label{eq:no-polynomial-part}
	\theta = \sum_{n=1}^\infty b_n t^{-n}\in \fqttt.
\end{equation} 
It is well known 
that many of the Diophantine properties of the Laurent series $\theta$ are encoded in the tiling
\begin{equation}\label{eq:tm_hankel_array}
	\frakh(\theta) \defeq\left\{\frakh_{m,n}(\theta)\right\}_{m=1,n=1}^\infty,
\end{equation}
where for any $m,n\geq1$,
\begin{equation}\label{eq:tm_hankel}
	\frakh_{m,n}(\theta) \defeq \det
	\begin{pmatrix}
		b_{n}&b_{n+1}&\cdots &b_{n+m-1}\\
		b_{n+1}&\iddots&\iddots&b_{n+m}\\
		\vdots&\iddots&\iddots&\vdots\\
		b_{n+m-1}&b_{n+m}&\cdots&b_{n+2m-2}
	\end{pmatrix}
\end{equation}
is a determinant of a Hankel matrix which corresponds to the sequence $\{b_n\}_{n=1}^\infty$, computed in $\bF_q$.
More explicitly, for any $k\geq1$, denote 
\begin{equation}\label{eq:diophantine}
i_k=i_k(\theta)\defeq\sum_{k'=1}^kd_{k'}\,,
\end{equation}
where $\left\{d_k\right\}_{k=1}^\infty$ are the degrees of the partial quotients of $\theta$ defined in \eqref{eq:dk}.
The integer $i_k$ is the degree of the denominator of the $k$\textsuperscript{th} convergent of $\theta$. A standard fact in the theory of Diophantine approximation over function fields \cite[Setion 9]{dT04} is that
\begin{equation}\label{eq:last}
	\begin{aligned}
		\frakh_{i,1}(\theta)\neq0\ 
		&\iff
		\min_{M,N\in\fqt,\;N\neq0,\;\deg N<i} | N\theta - M |=q^{-i}  \\
		&\iff
		i=i_k(\theta) \text{ for some }k\geq1\,,
	\end{aligned}
\end{equation}
where the first equivalence follows by rewriting $| N\theta - M |=q^{-i}$ as a system of linear equations. By \eqref{eq:base-t-expansion} and \eqref{eq:tm_hankel}, the same holds more generally for every $j\geq0$ with $\frakh_{i,j+1}(\theta)$ and $t^{j}\theta$ instead of $\frakh_{i,1}(\theta)$ and $\theta$. It is evident from this and from Definition~\ref{def:escape} that in order to understand the approximations of $\left\{t^j\theta\right\}_{j=0}^\infty$ by rational functions in $\bF_q(t)$ in general and questions about escape of mass in particular, one can study the patterns of zeros in $\frakh(\theta)$.

\begin{figure}[ht]
	\centering
	\includegraphics[width=1\linewidth]{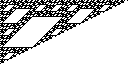}
	\caption{Hankel determinants of the Thue-Morse sequence $\frakh_{m,n}(\tm)$ for every $m,n\geq1$ such that $2m+n-2\leq128$, where the $0$'s are white and the $1$'s are black.}
	\label{fig:thm_hankel}
\end{figure}

For the Thue-Morse sequence over $\ft$, it is visually evident that $\frakh(\tm)$ has some self-similarity (see Figure \ref{fig:thm_hankel}). In their celebrated paper \cite{APWW}, Allouche, Peyri\`ere, Wen and Wen formalise this by proving that $\frakh(\tm)$ is \emph{automatic}, which loosely speaking means that it is obtained as a limit of a substitution and coding process similar to the one used above in the definition of $\tm$ (see~\cite[Section 14]{AllSha} for a treatment of $2$-dimensional automatic tilings).
However, in \cite{APWW} no explicit substitution and coding that generate $\frakh(\tm)$ are provided. This paper fills this gap. Apart from its importance to the subject of automatic tilings, this explicit structure 
is precisely what allows us to deduce Theorem~\ref{thm:main} in \S\ref{sec:uniform-escape-of-mass}.

\ignore{
It turns out that shearing and rotating the array \eqref{eq:tm_hankel_array} makes its structure easier to notice and describe (see Figures \ref{fig:thm_wal} and \ref{fig:thm_flat} \usnote{fix ref}). 
For every $m\geq0$ and $n\geq m+1$ denote
\begin{equation*}\label{eq:tm_toeplitz}
	\frakt_{m,n}(\theta) \defeq \det
	\begin{pmatrix}
		\theta_{n}&\theta_{n+1}&\cdots &\theta_{n+m}\\
		\theta_{n-1}&\ddots&\ddots&\vdots\\
		\vdots&\ddots&\ddots&\theta_{n+1}\\
		\theta_{n-m}&\cdots&\theta_{n-1}&\theta_n
	\end{pmatrix}.
\end{equation*}
This is the determinant of a Toeplitz matrix whose $j$\textsuperscript{th}th row is the first $m+1$ coefficients of negative powers of $t$ in $t^{n-j+1}\theta$. The array 
\begin{equation}\label{eq:tm_toeplitz_array}
	\frakt(\theta) \defeq\left\{\frakt_{m,n}(\theta)\right\}_{m=0,n=m+1}^\infty\,
\end{equation}
is achieved from \eqref{eq:tm_hankel_array} by shearing to the right: 
$$\frakt_{m,n}=\frakh_{m,n-m-1}(\theta)\,,$$ 
for every $m\geq0$ and $n\geq m+1$. In the context of number walls (see Section \ref{sec:frame-relations} for a definition) it is common to extend the definition of $\frakt$ to negative rows by $\frakt_{-1,n}=1$ and $\frakt_{m,n}=0$ for every $m<-1$ and every $n\geq0$. 
\begin{figure}[ht]
	\centering
	\includegraphics[width=0.9\linewidth]{thm_wal_128.png}
	\caption{Toeplitz determinants of the Thue-Morse sequence of length $128$ computed $\mod 2$, where the $1$'s are black and the $0$'s are white. This is the number wall over $\ft$ of $\mu^7(0)$.}
	\label{fig:thm_wal}
\end{figure}
}
Our main discovery is that the symmetries and structure of the tiling $\frakh(\tm)$, although still quite complex, become reasonably approachable 
after applying a certain linear 
map to the plane.
In order to do that, for any $\theta$ as in \eqref{eq:no-polynomial-part} we first extend the tiling $\frakh(\theta)$ to nonpositive $m$'s by setting $$
\frakh_{m,n}(\theta) = 
\begin{cases}
	1 & \textrm{for $m=0$ and all $n\geq1$}\\
	0 & \textrm{for all $m<0$ and $n\geq2|m|+1$}
\end{cases}.
$$
We then define 
\begin{equation}\label{eq: frakf tile}
\frakf_{m,n}(\theta)\defeq\begin{cases}
	 (-1)^{\tfrac{(m-n-1)(m-n+1)}{2}}\frakh_{\frac{m-n+1}{2},n}(\theta)&m\not\equiv n(2)\\
	0&m\equiv n(2)
\end{cases}
\end{equation}
for every $m\geq0$ and $n\geq1$. 
\ignore{Both the introduction of the new $0$'s in coordinates with different parities and 
the vertical shearing are natural from various perspectives which we do not address at the moment (\red{see Remark \ref{}}).}
The array
\begin{equation}\label{eq:tm_flat_array}
	\frakf(\theta)\defeq\left\{\frakf_{m,n}(\theta)\right\}_{m=0,n=1}^\infty\,
\end{equation}
is an example of a tiling we call \emph{diagonally-aligned number wall} (see Section \ref{sec:frame-relations} for a definition).
The main discovery in this paper is that $\frakf(\alpha)$ has a remarkable structure which turns out to be simple enough in order to be explicitly described (see Figure \ref{fig:thm_flat}).

\begin{figure}[ht]
	\centering
	\includegraphics[width=1\linewidth]{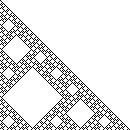}
	\caption{The (diagonally-aligned) number wall of the Thue-Morse sequence $\frakf_{m,n}(\tm)$ for $0\leq m\leq129$ and $1\leq n\leq130$, where the $0$'s are white and the $1$'s are black.}
	\label{fig:thm_flat}
\end{figure}

Our main result Theorem~\ref{thm:main2early} below identifies the tiling $\frakf(\tm)$ as an explicit coding of a substitution tiling. The definition of the substitution and coding requires some preparations which we now turn our attention to.

\subsection{The main theorem}
Our goal now is to define the necessary objects needed to state our main result. Unfortunately, we need to introduce quite a bit of structure in order for things to be understandable. Our plan is as follows:
\begin{itemize}
\item We define a set $\Sig$ of $15$ \emph{basic tiles} and a group $G$ of $16$ elements that acts on $\Sig$. 
\item We then extend the action of $G$ to an action on square matrices $\Mat_{2^l}(\Sig)$ for any $l\ge 0$, that involves both the $G$-action on the entries as well as a geometric action as isometries of the matrix thought of as a square.
\item We then use this $G$-action to define a $2$-substitution $\sig$ on $\Sig$ that \emph{commutes} with the $G$-actions defined. By choosing an appropriate tile $\splus{0}\in\Sig$, we obtain a $2$-dimensional substitution tiling $\sig^\infty(\splus{0})$ 
with remarkable symmetries which are captured by the $G$-actions. 
\item We then define a $4$-coding $\ka_1: \Sig\lra \Mat_4(\bF_2)$ and obtain the $2$-dimensional tiling $\ka_1\left(\sig^\infty(\splus{0})\right)$ of $\bN^2$ by $\ft$.
\end{itemize} 
The following theorem is the main theorem of this paper. It will be restated again in the end of the current subsection after we carry out the plan laid above.
\begin{theorem}\label{thm:main2early}\label{thm:main2}
Let $\tm$ be the Thue-Morse sequence as in~\eqref{eq: Thue-Morse}. 
The substitution $\sigma$ and coding $\kappa_1$ generate $\frakf(\tm)$. More precisely,
$$
\left\{\frakf_{m,n}(\tm)\right\}_{m=1,n=1}^\infty=\kappa_1\left(\sigma^\infty(\splus{0})\right).
$$
\end{theorem}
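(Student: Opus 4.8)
The strategy is to prove the identity $\left\{\frakf_{m,n}(\tm)\right\}=\kappa_1\left(\sigma^\infty(\splus{0})\right)$ by exploiting the self-similarity on \emph{both} sides and matching them scale by scale. On the dynamical/combinatorial side, the tiling $\sigma^\infty(\splus{0})$ is by construction a $2$-substitution fixed point, so for every $l\ge 0$ its restriction to the block $\{1,\dots,2^l\}^2$ (or an appropriate shifted copy) is $\sigma^l$ applied to a single tile, and applying $\kappa_1$ yields an explicit $2^{l+2}\times 2^{l+2}$ block of $\ft$-entries. On the Thue-Morse side, one must establish an analogous recursive description of $\frakf(\tm)$: a rule expressing a $2^{l+1}$-scale block of $\frakf(\tm)$ in terms of $2^l$-scale blocks, indexed by some finite amount of ``boundary data.'' The heart of the proof is to show these two recursions are intertwined by $\kappa_1$ and the $G$-action.

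\textbf{Step 1: frame relations and the recursion for $\frakf(\tm)$.} First I would invoke the theory of number walls (the ``frame relations'' promised in Section~\ref{sec:frame-relations}): the Hankel/Toeplitz determinants $\frakh_{m,n}$ of \emph{any} sequence over a field satisfy local algebraic identities — the windowed ``frame constraint'' governing the shapes of zero-blocks (windows), together with the cross/ladder relations on their borders. Transported through the linear change of coordinates \eqref{eq: frakf tile}, these become local relations on $\frakf(\tm)$ that constrain how the tiling can continue; this is exactly what makes $\frakf(\tm)$ a \emph{diagonally-aligned number wall}. Next, I would feed into these relations the specific recursive structure of the Thue-Morse sequence itself, namely $\tm_{2n}=\tm_n$ and $\tm_{2n+1}=1-\tm_n$ (equivalently the substitution $\mu$). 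The Hankel determinants of a sequence with such a $2$-block structure decompose recursively — this is the Allouche--Peyri\`ere--Wen--Wen phenomenon made explicit — and the claim is that one obtains a clean ``doubling rule'': the value of $\frakf$ on a dyadic block of side $2^{l+1}$ is determined by the values on the four sub-blocks of side $2^l$ via a rule that only sees a bounded amount of interface data. The technical engine here is: compute how the $2\times 2$ decimation of the coefficient sequence acts on Hankel matrices, track the determinants (with careful bookkeeping of the signs $(-1)^{(m-n-1)(m-n+1)/2}$ and the inserted zeros on equal-parity coordinates), and verify the output has precisely the form encoded by a single application of $\sigma$ after the coding $\kappa_1$.

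\textbf{Step 2: set up the induction and match the two substitution systems.} I would then prove by induction on $l$ the statement: \emph{for every $l\ge 0$, the $\ft$-valued $2^{l+2}$-block of $\frakf(\tm)$ in the appropriate position equals $\kappa_1\left(\sigma^l(\splus{0})\right)$, and moreover the relevant boundary data of this block equals the $G$-type (or ``decorated tile'') recorded by the single $\Sig$-tile $\sigma^{\,?}$ being expanded.} The base case $l=0$ is a finite check: compute the top-left $4\times4$ of $\frakf(\tm)$ directly from the first few Thue-Morse coefficients and compare with $\kappa_1(\splus{0})$. For the inductive step, one uses the fact — part of the construction in the plan laid out before the theorem — that $\sigma$ \emph{commutes} with the $G$-action, i.e. $\sigma(g\cdot s)=g\cdot\sigma(s)$ in the sense of the geometric-plus-entrywise action on $\Mat_2(\Sig)$. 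The point is that the doubling rule from Step 1 for $\frakf(\tm)$ must be shown to be \emph{exactly} the rule ``replace each tile $s$ by the $2\times2$ matrix $\sigma(s)$'' once one reads the tiling through the dictionary that associates to each dyadic block its decorated $\Sig$-tile. Because $\sigma$ has only $15$ tiles and $G$ has order $16$, this amounts to a finite verification: check that for each of the $15$ tiles $s\in\Sig$, the Thue-Morse doubling rule applied to a block of decoration $s$ produces the four blocks with decorations matching the four entries of the matrix $\sigma(s)$ (up to the prescribed $G$-twists). The self-similarity of $\mu$ guarantees that no new decorations appear, so the induction closes.

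\textbf{Main obstacle.} The genuine difficulty — and where essentially all the work lies — is Step~1: extracting from the frame relations plus the $2$-adic recursion of Thue-Morse a doubling rule that is \emph{finite-state}, i.e. closes on a finite alphabet of decorated blocks, and then identifying that finite-state rule with the specific $\sigma$ and $\kappa_1$ of the construction. The windows (zero-blocks) of the number wall can be large and their geometry interacts with the dyadic scaling in a subtle way, so one must be careful that the ``bounded interface data'' really is bounded and that the sign cocycle in \eqref{eq: frakf tile} is absorbed consistently into the $G$-action rather than leaking across scales; getting the $G$-twists to match the geometric isometries in the definition of the $G$-action on $\Mat_{2^l}(\Sig)$ is the delicate bookkeeping. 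Once the doubling rule is pinned down, both the induction and the base case are finite, mechanical checks.
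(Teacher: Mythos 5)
Your plan inverts the logical direction of the paper's argument, and the step on which everything rests in your version is exactly the one the paper is designed to avoid. You propose to establish, in Step~1, a ``doubling rule'' for $\frakf(\tm)$ directly from the $2$-adic recursion $\tm_{2n}=\tm_n$, $\tm_{2n+1}=1-\tm_n$, by ``computing how the $2\times2$ decimation of the coefficient sequence acts on Hankel matrices.'' This is a genuine gap: Hankel determinants do not decompose under decimation of the underlying sequence in any way that sees only bounded interface data, and no such clean block recursion is known. This is precisely why \cite{APWW} proves automaticity of $\frakh(\tm)$ only abstractly, via an involved system of recurrences, without producing an explicit substitution and coding --- the very gap this theorem is meant to fill. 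You correctly identify this as ``where essentially all the work lies,'' but you give no viable method for it, and the one you sketch does not work.

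The missing idea is the \emph{uniqueness} (converse) direction of Lunnon's characterisation, Theorem~\ref{thm:Lunnon-for-flats}: a diagonally-aligned number wall is completely determined by the frame relations of Definition~\ref{def:flat-frame-relations} together with its boundary diagonal. With this in hand one never needs to prove any self-similarity of $\frakf(\tm)$ at all. Instead one works entirely on the substitution side: (a) show that the diagonal just below the main diagonal of $\kappa_1\left(\sigma^\infty(\splus{0})\right)$ is the Thue--Morse sequence (Lemma~\ref{lem:kappa1sigma0_diagonal}, an easy consequence of $\sigma(\splus{0})$, $\sigma(\sminus{0})$ and the coding), and (b) verify by induction on $l$ that $\kappa_1\left(\sigma^l(s)\right)$ satisfies all the frame relations for every $s\in\Sig$ (Theorem~\ref{thm:main2-reduced}); the induction is driven by the $G$-equivariance of $\sigma$ and $\kappa_1$, the periodicity of the boundary rows and columns of $\sigma^l(\splus{0})$ and $\sigma^l(\tiler{a})$, and a handful of finite checks on $\sigma^4(\splus{0})$ and $\sigma^5(\splus{0})$. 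Uniqueness then forces $\kappa_1\left(\sigma^\infty(\splus{0})\right)=\frakf(\tm)$. You do invoke the frame relations, but only as local ``constraints'' on how $\frakf(\tm)$ can continue, never as a rigidity statement; as a result your induction in Step~2 has nothing to induct on until the unproven doubling rule of Step~1 is supplied. To repair the proof, replace Step~1 by the rigidity statement and redirect the induction from $\frakf(\tm)$ onto $\kappa_1\left(\sigma^l(s)\right)$.
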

 
We now execute the above plan that will allow the reader to understand the automatic tiling $\ka_1\left(\sig^\infty(\splus{0})\right)$ featuring in Theorem~\ref{thm:main2early}.
\subsubsection{The basic tiles and their group of symmetries}
The simplest tile in $\Sigma$ is denoted by $\tilo$, and it is graphically depicted as:

$$\tilo = 
\scalebox{0.3}{
	\begin{tikzpicture}
		\foreach \i in {0,4}{
			\draw (0,\i) -- (4,\i);
			\draw (\i,0)-- (\i,4);
		}
	\end{tikzpicture}
}
$$

The dihedral group $D$ of order $8$ (the isometry group of a square) acts naturally on the basic tiles in $\Sig$. Denote by $\rho$ the isometry of the square that rotates it by $90\degree$ clockwise and by $\eta$ the reflection of the square along the anti-diagonal. As a group $D$ has the following presentation 
$$D = \idist{\rho,\eta \,|\, \rho^4, \eta^2, \rho\eta\rho\eta}.$$
The basic tile $\tilo$ is of course a fixed point for the action of $D$.
The next collection of basic tiles consists of tiles which are denoted by $\tilec{i}$ for $i=0,\ldots,3$. Their graphic depiction is as follows:

$$\tilec{0} = 
\scalebox{0.3}{
	\begin{tikzpicture}
		\foreach \i in {0,4}{
			\draw (0,\i) -- (4,\i);
			\draw (\i,0)-- (\i,4);
		}
		\fill (0,0) circle (0.3);
		\end{tikzpicture}
}\,,\quad
%
%
\tilec{1} = 
\scalebox{0.3}{
	\begin{tikzpicture}
		\foreach \i in {0,4}{
			\draw (0,\i) -- (4,\i);
			\draw (\i,0)-- (\i,4);
		}
		\fill (0,4) circle (0.3);
	\end{tikzpicture}
}\,,\quad
%
%
\tilec{2} = 
\scalebox{0.3}{
	\begin{tikzpicture}
		\foreach \i in {0,4}{
			\draw (0,\i) -- (4,\i);
			\draw (\i,0)-- (\i,4);
		}
		\fill (4,4) circle (0.3);
	\end{tikzpicture}
}\,,\quad
%
%
\tilec{3} = 
\scalebox{0.3}{
	\begin{tikzpicture}
		\foreach \i in {0,4}{
			\draw (0,\i) -- (4,\i);
			\draw (\i,0)-- (\i,4);
		}
		\fill (4,0) circle (0.3);
	\end{tikzpicture}
}\,.
%
%
$$
It should be clear that $\set{\tilec{i}:0\le i\le 3}$ is an orbit of $D$. In fact, for every $0\leq i\leq 3$ we have $\tilec{i} = \rho^i\tilec{0}$, while $\tilec{0},\tilec{2}$ are fixed points for $\eta$ and $\eta\tilec{1} = \tilec{3}$. 
The next two tiles are denoted by $\tiler{a}$ and $\tiler{b}$, and their graphic depiction is as follows:
$$
\tiler{a} = 
\scalebox{0.3}{
	\begin{tikzpicture}
		\foreach \i in {0,4}{
			\draw (0,\i) -- (4,\i);
			\draw (\i,0)-- (\i,4);
		}
		\draw (2,0)-- (0,2);
		\draw (2,0)-- (4,2);
		\draw (2,4)-- (0,2);
		\draw (2,4)-- (4,2);
		\fill[fill=black] (0,0)--(1,1)--(0,2);
		\fill[fill=black] (2,4)--(1,3)--(0,4);
		\fill[fill=black] (2,0)--(3,1)--(4,0);
		\fill[fill=black] (4,2)--(3,3)--(4,4);
	\end{tikzpicture}
}\,,\quad
%
\tiler{b} = 
\scalebox{0.3}{
	\begin{tikzpicture}
		\foreach \i in {0,4}{
			\draw (0,\i) -- (4,\i);
			\draw (\i,0)-- (\i,4);
		}
		\draw (2,0)-- (0,2);
		\draw (2,0)-- (4,2);
		\draw (2,4)-- (0,2);
		\draw (2,4)-- (4,2);
		\fill[fill=black] (0,0)--(1,1)--(2,0);
		\fill[fill=black] (0,2)--(1,3)--(0,4);
		\fill[fill=black] (4,2)--(3,1)--(4,0);
		\fill[fill=black] (2,4)--(3,3)--(4,4);
	\end{tikzpicture}
}\,.\quad
$$
It should be clear that $\set{\tiler{a},\tiler{b}}$ is an orbit of $D$. In fact, the tiles $\tiler{a}$ and $\tiler{b}$ are fixed points for $\rho$, while $\eta\tiler{a}=\tiler{b}$. 
The next collection of tiles in $\Sig$ consists of tiles which are denoted by $\splus{i}$ for $i=0,\dots,3$. Their graphic depiction is as follows:
$$
\splus{0} = 
\scalebox{0.3}{
	\begin{tikzpicture}
		\foreach \i in {0,4}{
			\draw (0,\i) -- (4,\i);
			\draw (\i,0)-- (\i,4);
		}
		\draw (4,0)-- (0,4);
		\fill[fill=black] (0,0)--(0,2)--(1,3)--(3,1)--(2,0);
	\end{tikzpicture}
}\,,\quad
\splus{1} = 
\scalebox{0.3}{
	\begin{tikzpicture}
		\foreach \i in {0,4}{
			\draw (0,\i) -- (4,\i);
			\draw (\i,0)-- (\i,4);
		}
		\draw (0,0)-- (4,4);
		\fill[fill=black] (0,4)--(2,4)--(3,3)--(1,1)--(0,2);
	\end{tikzpicture}
}\,,\quad
\splus{2} = 
\scalebox{0.3}{
	\begin{tikzpicture}
		\foreach \i in {0,4}{
			\draw (0,\i) -- (4,\i);
			\draw (\i,0)-- (\i,4);
		}
		\draw (4,0)-- (0,4);
		\fill[fill=black] (4,4)--(2,4)--(1,3)--(3,1)--(4,2);
	\end{tikzpicture}
}\,,\quad
\splus{3} = 
\scalebox{0.3}{
	\begin{tikzpicture}
		\foreach \i in {0,4}{
			\draw (0,\i) -- (4,\i);
			\draw (\i,0)-- (\i,4);
		}
		\draw (0,0)-- (4,4);
		\fill[fill=black] (4,0)--(2,0)--(1,1)--(3,3)--(4,2);
	\end{tikzpicture}
}\,.\quad
%
$$
It should be clear that $\set{\splus{i}:0\le i\le 3}$ is a $D$-orbit. In fact, for every $0\leq i\leq 3$ we have $\splus{i} = \rho^i\splus{0}$, the tiles $\splus{0}$ and $\splus{2}$ are fixed points for $\eta$, while $\eta\splus{1}=\splus{3}$. 

The last collection of basic tiles in $\Sig$ consists of tiles denoted by $\sminus{i}$ for $i=0,\dots,3$. Their graphic depiction is as follows:

$$
\sminus{0} = 
\scalebox{0.3}{
	\begin{tikzpicture}
		\foreach \i in {0,4}{
			\draw (0,\i) -- (4,\i);
			\draw (\i,0)-- (\i,4);
		}
		\draw (4,0)-- (0,4);
		\fill[fill=black] (0,2)--(1,3)--(0,4);
		\fill[fill=black] (2,0)--(3,1)--(4,0);
	\end{tikzpicture}
}\,,\quad
\sminus{1} = 
\scalebox{0.3}{
	\begin{tikzpicture}
		\foreach \i in {0,4}{
			\draw (0,\i) -- (4,\i);
			\draw (\i,0)-- (\i,4);
		}
		\draw (0,0)-- (4,4);
		\fill[fill=black] (0,0)--(1,1)--(0,2);
		\fill[fill=black] (2,4)--(3,3)--(4,4);
	\end{tikzpicture}
}\,,\quad
\sminus{2} = 
\scalebox{0.3}{
	\begin{tikzpicture}
		\foreach \i in {0,4}{
			\draw (0,\i) -- (4,\i);
			\draw (\i,0)-- (\i,4);
		}
		\draw (4,0)-- (0,4);
		\fill[fill=black] (2,4)--(1,3)--(0,4);
		\fill[fill=black] (4,2)--(3,1)--(4,0);
	\end{tikzpicture}
}\,,\quad
\sminus{3} = 
\scalebox{0.3}{
	\begin{tikzpicture}
		\foreach \i in {0,4}{
			\draw (0,\i) -- (4,\i);
			\draw (\i,0)-- (\i,4);
		}
		\draw (0,0)-- (4,4);
		\fill[fill=black] (0,0)--(1,1)--(2,0);
		\fill[fill=black] (4,2)--(3,3)--(4,4);
	\end{tikzpicture}
}\,.\quad
$$
It should be clear that $\set{\sminus{i}:0\le i\le 3}$ is a $D$-orbit. In fact, for every $0\leq i \leq3$ we have $\sminus{i} = \rho^i\sminus{0}$, the tiles $\sminus{0}$ and $\sminus{2}$ are fixed points for $\eta$, while $\eta\sminus{1}=\sminus{3}$.

To summarise, we have defined the set of basic tiles 
$$\Sig = \set{\splus{i}, \sminus{i}, \tilec{i}:0\le i\le 3}\cup\set{\tiler{a},\tiler{b},\tilo}$$
which consists of $15$ tiles, on which the dihedral group $D$ acts and partition it into five orbits.

Next, we introduce an extra symmetry on $\Sig$ that flips between $\splus{i}$ and $\sminus{i}$ for every $0\leq i\leq3$ and also flips between $\tiler{a}$ and $\tiler{b}$. This extra symmetry is crucial for our analysis. 
Let $$\bbz/2\bbz=\idist{\iota | \iota^2}$$ be a group of order two. 
\begin{definition}\label{def: iota}
The action of $\bbz/2\bbz$ on $\Sigma$ is defined by the following involution: 
\begin{gather*}
	\iota\tilo =\tilo\,,\\
	\begin{align*}
		\iota\tilec{i}&=\tilec{i}\,, &\quad& \textrm{for all $0\le i\le 3$}\,,
	\end{align*}\\
	\begin{align*}
		\iota \tiler{a} &= \tiler{b}\,, &\textrm{ and }& &\iota\tiler{b}&=\tiler{a}\,,&\quad&\\
		\iota \splus{i} &= \sminus{i}\,, &\textrm{ and }& &\iota\sminus{i}&=\splus{i}\,, &\quad& \textrm{for all $0\le i\le 3$}\,.
	\end{align*}
\end{gather*}
\end{definition} 
The action of $\bbz/2\bbz$ on $\Sig$ commutes with the action of $D$. Therefore, this gives an action of the group
$$G: = D\times\bbz/2\bbz$$
on $\Sig$, where $D$ and $\bbz/2\bbz$ are viewed as subgroups of $G$. The decomposition of $\Sig$ into $G$-orbits is almost identical to the decomposition into $D$-orbits, just that now, the two $D$-orbits $\set{\splus{i}}_{i=0}^3$ and  $\set{\sminus{i}}_{i=0}^3$ become one $G$-orbit of $8$ elements. We have
\begin{equation}\label{eq: Sigma splits to G-orbits}
\Sig = G\splus{0}\cup G\tilec{0}\cup G\tiler{a}\cup G\tilo\,.
\end{equation}

\subsubsection{Actions on matrices}
\newcommand{\tgeo}{T^{\on{geo}}}
\newcommand{\tinn}{T^{\on{ent}}}
When one looks at the tiling $\frakf(\tm)$ in different scales and stares at it enough, the graphic depictions of 
the tiles in $\Sig$ emerge (see Figure \ref{fig:thm_flat_large}). 

\begin{figure}[ht]
	\centering
	\includegraphics[width=1\linewidth]{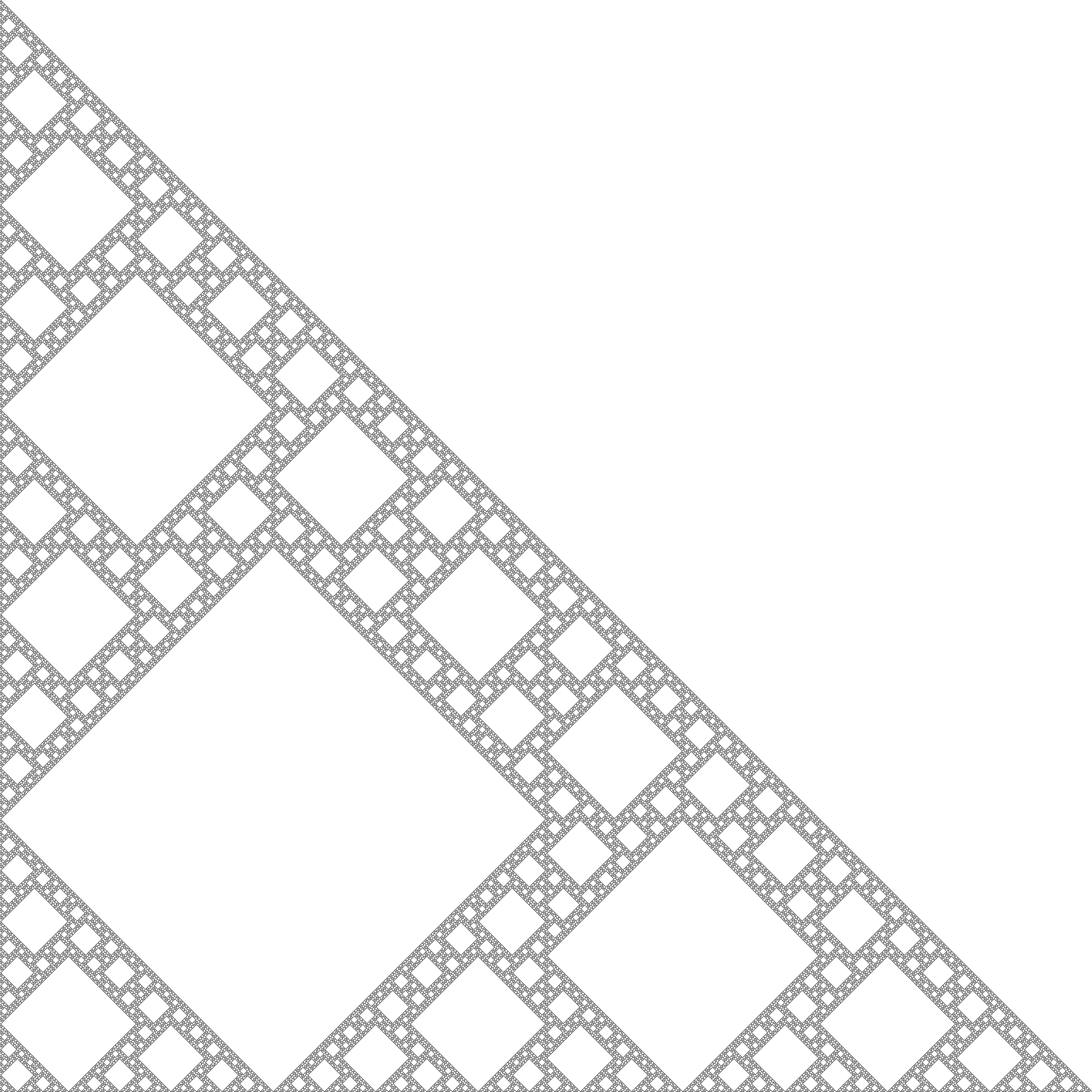}
	\caption{The (diagonally-aligned) number wall of the Thue-Morse sequence $\frakf_{m,n}(\tm)$ for $0\leq m\leq2049$ and $1\leq n\leq2050$, where the $0$'s are white and the $1$'s are black.}
	\label{fig:thm_flat_large}
\end{figure}

In order to be able to exploit the $G$-action on these tiles for our cause, it is necessary to generalise this action to an action of $G$ on
square matrices over $\Sig$. There are two natural actions:
\begin{itemize}
\item For $g\in G$ and a matrix $M\in \Mat_n(\Sig)$, let $\tinn_g M$ denote the matrix obtained from $M$ by acting on each of its entries by $g$. It is clear that this defines a $G$-action on $\Mat_n(\Sig)$ for any $n\ge 1$.
\item For $g\in G$ and a matrix $M\in \Mat_n(\Sig)$, let $\tgeo_g M$ denote the matrix obtained from $M$ by acting on it by the isometry of the square $\pi(g)$, where 
$$\pi:G\to D$$ 
is the natural projection. For example, $\tgeo_\iota$ acts trivially, $\tgeo_\rho M$ rotates $M$ by $90\degree$ clockwise and $\tgeo_\eta M$ reflects $M$ along
the anti-diagonal.
It should be clear that this defines an action of $G$ on $\Mat_n(\Sig)$ for any $n\ge 1$.
\end{itemize}
These two actions commute in the sense that for every $n\geq1$ and $M\in\Mat_n(\Sig)$, every $g,h\in G$ satisfy
\begin{equation}\label{eq: commutation of tgeo and tinn}
\tinn_g\tgeo_h M = \tgeo_h\tinn_g M\,.
\end{equation}
In reality we will use a \emph{twisted} combination of these two actions. 
The twist is obtained from an automorphism $\psi:G\to G$, which we now introduce.
\begin{definition}
The function $\psi:G\to G$ is the unique group automorphism which is defined on the generators of $G$ by
\begin{equation}\label{eq:above}
	\psi(\rho) = \iota\rho\,,\quad\psi(\eta) = \iota\eta\,,\quad\psi(\iota) =\iota\,.
\end{equation}
\end{definition}
The reader can verify that there exists a unique automorphism $\psi$ of $G$ that satisfies \eqref{eq:above}. Such a verification amounts to showing that the group generated by $\iota\rho$ and  $\iota\eta$ is isomorphic to $D$ and that it does not contain $\iota$. 

We now define for any $l\ge 0$ the \emph{twisted action} of $G$ on $\Mat_{2^l}(\Sig)$. This action will extend the $G$-action described above on $\Sig$ which corresponds to the case $l = 0$. 

\begin{definition}\label{def: action on matrices}
The action of $G$ on $\Mat_{2^l}(\Sig)$ is as follows: for every $g\in G$ and $M\in\Mat_{2^l}(\Sig)$, the action of $g$ on $M$ is
\begin{equation}\label{eq: action on matrices}
g M \defeq \tgeo_g\tinn_{\psi^l(g)} M\,.
\end{equation}
In words, the action of $g$ on a matrix is defined by the geometric action of $\pi(g)$ as an isometry on the matrix $M$ thought of as a square and the action of $\psi^l(g)$ on each of the entries of the matrix $M$. 
\end{definition}
Note that the commutation \eqref{eq: commutation of tgeo and tinn} implies that this is indeed an action of $G$ on
$\Mat_{2^l}(\Sig)$ for any $l\ge 0$. Also note, that for $l=0$ we recover the action of $G$ on $\Sig$. 
\begin{example}
It is instructive to work out explicitly what happens for $2\times 2$ matrices ($l = 1$):
\begin{align}
\nonumber 
\rho\mat{a&b\\c&d}& =\mat{\psi( \rho) c&\psi( \rho) a\\\psi( \rho)d&\psi( \rho) b}  
= \mat{\iota \rho c&\iota \rho a\\\iota \rho d&\iota \rho b}, \\
\label{eq: acting with eta on 2 matrix}
\eta\mat{a&b\\c&d} &=  \mat{\psi( \eta) d&\psi( \eta) b\\\psi(\eta)c&\psi(\eta) a}
= \mat{\iota\eta d &\iota\eta b\\ \iota\eta c&\iota\eta a},\\
\nonumber \iota \mat{a&b\\c&d} &= \mat{\psi( \iota) a&\psi(  \iota) b\\\psi( \iota)c&\psi( \iota) d}
= \mat{\iota a &\iota b\\ \iota c&\iota d}.
\end{align}
\end{example}

\subsubsection{The $2$-substitution} 
We use the $G$-action on $\Sig$ and on $\Mat_2(\Sig)$ in order to define a $2$-substitution $\sigma:\Sig\to\Mat_2(\Sig)$, while having in mind the equivariance relation $$\sig(gs) = g\sig(s)$$ which we would like to hold. 
Recall that $\Sig$ splits into $4$ orbits of $G$ as in~\eqref{eq: Sigma splits to G-orbits}. The first step is
to define $\sig$ on the set of representatives 
$$\Sig_0:= \set{\splus{0},\tilec{0},\tiler{a},\tilo}.$$
\begin{definition}\label{def: the substitution sigma part 1}
The substitution $\sig$ is defined on $\Sig_0$ as follows:
\begin{align*}
\sig(\splus{0})& = \mat{\splus{0}& \tilec{0}\\ \tiler{a}&\sminus{0}}, &\quad 
\sig(\tilec{0}) &=  \mat{\tilo&\tilo \\ \tilec{0}&\tilo},\\[2pt]
\sig\left(\tiler{a}\right)  &= \mat{\sminus{1}& \splus{2} \\ \splus{0}&\sminus{3}}, &\quad
\sig(\tilo)   &= \mat{\tilo&\tilo \\ \tilo&\tilo}.
\end{align*}
\end{definition}
It is useful to have in mind the graphic depiction of the substitution, which can be generated by putting the depictions of each of the tiles adjacent to each other. For example, the image of $\splus{0}$ and $\tiler{a}$ are
\begin{align*}
\sig(\splus{0}) &=
\scalebox{0.3}{
	\begin{tikzpicture}
		\path  (0,0) pic{tilera};
		\path  (4,0) pic{sminus};
		\path  (0,4) pic{splus};
		\path  (4,4) pic{tilec};
	\end{tikzpicture}
}\,,\quad
&\sig\left(\tiler{a}\right)& = 
	\scalebox{0.3}{
		\begin{tikzpicture}
			\path  (0,0) pic{splus};
			\path[rotate=90,transform shape]  (0,-8) pic{sminus};
			\path[rotate=180,transform shape]  (-8,-8) pic{splus};
			\path[rotate=270,transform shape]  (-8,0) pic{sminus};
		\end{tikzpicture}
	}\,.
\end{align*}

The following lemma is essential for the extension of the definition of $\sig$ from $\Sig_0$ to all of $\Sig$ equivariantly.
\begin{lemma}	
\label{lem: stabilizer}
Every $s\in \Sig_0 = \set{\splus{0},\tilec{0},\tiler{a},\tilo}$
satisfies
\begin{equation}\label{eq: inclusion of stabilizers}
\on{Stab}_G(s) \leq \on{Stab}_G(\sig(s))\,.
\end{equation}
\end{lemma}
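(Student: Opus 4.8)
The plan is to verify \eqref{eq: inclusion of stabilizers} directly for each of the four representatives by first computing $\on{Stab}_G(s)$ for $s\in\Sig_0$ and then checking that each generator of that stabilizer fixes $\sig(s)$ under the $l=1$ action of Definition~\ref{def: action on matrices}. Recall $G = D\times\bZ/2\bZ$ has order $16$.

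First I would record the four stabilizers from the orbit data already stated in the excerpt. The tile $\tilo$ is a $G$-fixed point, so $\on{Stab}_G(\tilo) = G$. For $\tilec{0}$: its $G$-orbit is $\set{\tilec{i}}_{i=0}^3$ of size $4$ (since $\iota$ fixes each $\tilec{i}$), so $\on{Stab}_G(\tilec{0})$ has order $4$; from $\rho^i\tilec{0} = \tilec{i}$ and $\eta\tilec{0} = \tilec{0}$ and $\iota\tilec{0} = \tilec{0}$ we get $\on{Stab}_G(\tilec{0}) = \idist{\eta,\iota}\cong (\bZ/2\bZ)^2$. For $\tiler{a}$: its $G$-orbit is $\set{\tiler{a},\tiler{b}}$ of size $2$, so the stabilizer has order $8$; since $\rho$ fixes $\tiler{a}$ and $\iota\tiler{a} = \tiler{b}$ and $\eta\tiler{a} = \tiler{b}$, we have $\on{Stab}_G(\tiler{a}) = \idist{\rho,\iota\eta}$, the dihedral group of order $8$ generated by the clockwise rotation $\rho$ and the reflection $\iota\eta$. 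For $\splus{0}$: its $G$-orbit $\set{\splus{i},\sminus{i}}_{i=0}^3$ has size $8$, so the stabilizer has order $2$; since $\eta\splus{0} = \splus{0}$ while $\iota\splus{0} = \sminus{0}\neq\splus{0}$, we get $\on{Stab}_G(\splus{0}) = \idist{\eta}$.

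Next, for each representative I would check that the generators of its stabilizer fix $\sig(s)$, using the explicit $2\times 2$ formulas in \eqref{eq: acting with eta on 2 matrix} together with the $G$-action on $\Sig$ from Definition~\ref{def: iota} and the orbit relations $\rho^i\splus{0} = \splus{i}$ etc. The case $s = \tilo$ is immediate since $\sig(\tilo)$ is the constant matrix $\tilo$ and every $g\in G$ fixes it (geometrically a constant matrix is isometry-invariant, and $\tilo$ is fixed entrywise). For $s = \tilec{0}$ I must check $\eta$ and $\iota$ fix $\sig(\tilec{0}) = \smallmat{\tilo&\tilo\\\tilec{0}&\tilo}$: applying \eqref{eq: acting with eta on 2 matrix}, $\eta$ sends this to $\smallmat{\iota\eta\,\tilo&\iota\eta\,\tilo\\\iota\eta\,\tilec{0}&\iota\eta\,\tilo} = \smallmat{\tilo&\tilo\\\tilec{0}&\tilo}$ since $\iota\eta$ fixes $\tilo$ and $\tilec{0}$ (the anti-diagonal reflection swaps the off-diagonal entries, but both off-diagonal entries here are $\tilo$, and the $\tilec{0}$ is on the anti-diagonal position $(2,1)$, wait — I must double-check: position $(2,1)$ maps under anti-diagonal reflection to position... the reflection $\eta$ of a $2\times2$ matrix along the anti-diagonal sends $(1,1)\leftrightarrow(2,2)$ and fixes $(1,2),(2,1)$, so $\tilec{0}$ stays put). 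Similarly $\iota$ acts trivially on the entries and geometrically trivially. For $s = \tiler{a}$ I must check $\rho$ and $\iota\eta$ fix $\sig(\tiler{a}) = \smallmat{\sminus{1}&\splus{2}\\\splus{0}&\sminus{3}}$; here $\rho$ cyclically permutes the four corner entries $(1,1)\to(1,2)\to(2,2)\to(2,1)\to(1,1)$ while acting on each entry by $\psi(\rho) = \iota\rho$, and one checks $\iota\rho\,\splus{0}$ placed at $(1,1)$ equals the original entry there, namely $\sminus{1} = \iota\splus{1} = \iota\rho\splus{0}$ — consistent — and similarly around the cycle; the reflection $\iota\eta$ I check analogously using $\psi(\iota\eta) = \psi(\iota)\psi(\eta) = \iota\cdot\iota\eta = \eta$. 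For $s = \splus{0}$ I check $\eta$ fixes $\sig(\splus{0}) = \smallmat{\splus{0}&\tilec{0}\\\tiler{a}&\sminus{0}}$: by \eqref{eq: acting with eta on 2 matrix} this becomes $\smallmat{\eta\sminus{0}&\eta\tilec{0}\\\eta\tiler{a}&\eta\splus{0}}$... here $\psi(\eta) = \iota\eta$, so it is $\smallmat{\iota\eta\,\sminus{0}&\iota\eta\,\tilec{0}\\\iota\eta\,\tiler{a}&\iota\eta\,\splus{0}}$, and using $\iota\eta\,\sminus{0} = \iota\,\sminus{0} = \splus{0}$, $\iota\eta\,\splus{0} = \iota\,\splus{0} = \sminus{0}$, $\iota\eta\,\tilec{0} = \tilec{0}$, $\iota\eta\,\tiler{a} = \iota\,\tiler{b} = \tiler{a}$, we recover the original matrix.

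The main obstacle is purely bookkeeping: keeping straight the interaction between the geometric permutation of matrix positions under $\pi(g)$ and the twisted entrywise action by $\psi(g)$, especially the appearances of $\iota$ coming from $\psi$. There is no conceptual difficulty — each of the (at most) two generators for each of the four representatives gives a finite check — but one must be careful with the position-reindexing conventions (the downward/rightward matrix layout) and with the fact that $\psi$ introduces an $\iota$-twist that, on the $\splus{i}/\sminus{i}$ orbit, interacts nontrivially with the entrywise action. I would organise the verification as a short table: for each $s\in\Sig_0$, list $\on{Stab}_G(s)$, list the generators, and for each generator display $\sig(s)$ and its image, confirming equality.
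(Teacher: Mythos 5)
Your proposal is correct and follows essentially the same route as the paper: compute $\on{Stab}_G(s)$ for each representative via orbit--stabilizer, identify explicit generators ($\idist{\eta}$, $\idist{\eta,\iota}$, $\idist{\rho,\iota\eta}$, and all of $G$ for $\tilo$), and verify by direct computation with the twisted $2\times 2$ action that each generator fixes $\sig(s)$. All of your individual checks (including the position bookkeeping under the anti-diagonal reflection and the $\iota$-twist from $\psi$) agree with the paper's calculations.
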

\begin{proof}
The proof follows by checking \eqref{eq: inclusion of stabilizers} for every $s\in \Sig_0$. Start with $s = \splus{0}$. Since $\av{G}=16$ and $\av{G\splus{0}}=8$, 
the stabilizer group $\on{Stab}_G(\splus{0})$ has $2$ elements. Since it contains the involution $\eta$, it must be equal to $\idist{\eta}$. Acting with $\eta$ on $\sig(\splus{0})$ using the twisted $G$-action, equation \eqref{eq: acting with eta on 2 matrix} gives
$$\eta\sig(\splus{0}) = \eta\mat{\splus{0}& \tilec{0}\\ \tiler{a}&\sminus{0}}
=\mat{\iota\eta \sminus{0}&\iota\eta \tilec{0}\\\iota\eta \tiler{a}&\iota \eta \splus{0}}=
\mat{\splus{0}& \tilec{0}\\ \tiler{a}& \sminus{0}},$$
so $\eta\in\on{Stab}_G(\sig(\splus{0}))$.

Proceed with $s = \tiler{a}$. The orbit $G\tiler{a}$ has $2$ elements and hence its stabilizer group is of order $8$. It clearly contains $\rho$ and $\iota\eta$ (since both $\iota$ and $\eta$ switch between $\tiler{a}$ and $\tiler{b}$). Since the group $\idist{\rho,\iota\eta}$ has $8$ elements, it is $\on{Stab}_G\left(\tiler{a}\right)$. Thus, in order to verify \eqref{eq: inclusion of stabilizers} it is enough to show that $\rho$ and $\iota\eta$ stabilize $\sig\left(\tiler{a}\right)$. 
Working with the definition of the twisted action on matrices (see \eqref{eq: acting with eta on 2 matrix}) gives
\begin{align*}
\rho\sig\left(\tiler{a}\right) 
&= 
\rho\mat{\sminus{1}& \splus{2} \\ \splus{0}&\sminus{3}} 
= 
\mat{ \psi(\rho) \splus{0}&  \psi(\rho)\sminus{1} \\  \psi(\rho)\sminus{3} & \psi(\rho) \splus{2}}
\\[1.5pt]
&= 
\mat{ \iota\rho \splus{0}& \iota\rho\sminus{1} \\ \iota\rho\sminus{3} & \iota\rho\splus{2}}  
= 
\mat{\sminus{1}& \splus{2} \\ \splus{0}&\sminus{3}}.
\end{align*}
Similarly, 
\begin{align*}
\eta\iota\sig\left(\tiler{a}\right) 
&= 
\eta\iota\mat{\sminus{1}& \splus{2} \\ \splus{0}&\sminus{3}} 
= 
\mat{ \psi(\eta\iota) \sminus{3}&  \psi(\eta\iota) \splus{2}  \\  \psi(\eta\iota)\splus{0}& \psi(\eta\iota) \sminus{1}}
\\[1.5pt]
&=
\mat{\eta \sminus{3}& \eta  \splus{2}  \\  \eta \splus{0}&\eta  \sminus{1}} 
=
\mat{\sminus{1}& \splus{2} \\ \splus{0}&\sminus{3}}.
\end{align*}

Next, equation \eqref{eq: inclusion of stabilizers} is verified for $s=\tilec{0}$. As for the previous cases, it is easy to see that since the 
orbit has $4$ elements, the stabilizer is of order $4$, so it must be $\idist{\eta,\iota}$. Therefore, it is enough to check that $\iota,\eta\in \on{Stab}_G(\sig(\tilec{0}))$. Indeed,
$$\eta\sig(\tilec{0}) = \eta \mat{\tilo&\tilo \\ \tilec{0}&\tilo} = 
\mat{\psi(\eta)\tilo&\psi(\eta)\tilo \\ \psi(\eta)\tilec{0}&\psi(\eta)\tilo}=
\mat{\eta\iota\tilo&\eta\iota\tilo \\ \eta\iota\tilec{0}&\eta\iota\tilo} =
\mat{\tilo&\tilo \\ \tilec{0}&\tilo}. $$
Similarly,
$$\iota\sig(\tilec{0}) = \iota \mat{\tilo&\tilo \\ \tilec{0}&\tilo} = 
\mat{\psi(\iota)\tilo&\psi(\iota)\tilo \\ \psi(\iota)\tilec{0}&\psi(\iota)\tilo}=
\mat{\iota\tilo&\iota\tilo \\ \iota\tilec{0}&\iota\tilo} =
 \mat{\tilo&\tilo \\ \tilec{0}&\tilo}. $$
Finally, the verification of \eqref{eq: inclusion of stabilizers} for $s=\tilo$ is left to the reader.
\end{proof}
Given Lemma~\ref{lem: stabilizer}, we extend the definition of $\sig$ to all of $\Sig$ as follows:
\begin{definition}\label{def: the substitution  sigma part 2}
For any $s\in \Sig$, write $s = gs_0$ for some $s_0\in \Sig_0$ and $g\in G$. 
Then
$$\sig(s) = \sig(gs_0): = g\sig(s_0)\,,$$
where $\sig(s_0)$ is as in Definition~\ref{def: the substitution sigma part 1}.
\end{definition}

This is well defined. Indeed, 
if $s = g_1s_1=g_2s_2$ for some $s_1,s_2\in \Sig_0$, then by construction $s_1=s_2$ and $g_2^{-1}g_1\in \on{Stab}_G(s_1)$. So, by Lemma~\ref{lem: stabilizer} $g_2^{-1}g_1\in \on{Stab}_G(\sig(s_1))$, which implies that $g_1\sig(s_1)=g_2\sig(s_2)$, as needed.

As hinted above, the commutation relation $\sig(gs) = g\sig(s)$ for $s\in \Sig$ will be a key feature in our ability to analyse this substitution. It is the seed that propagates and governs the symmetries in $\sig^\infty(\splus{0})$.
\begin{proposition}\label{prop: equivariance}
The $G$-actions on $\Sigma$ and on $\Mat_2(\Sig)$ commute with $\sigma$. More explicitly, every $g\in G$ and $s\in \Sig$ satisfy
\begin{equation}\label{eq:equivariance}
	\sig(gs) = g\sig(s)\,.
\end{equation}
\end{proposition}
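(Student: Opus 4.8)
The plan is to deduce the equivariance formula \eqref{eq:equivariance} directly from the way $\sig$ was extended from $\Sig_0$ to all of $\Sig$ in Definition~\ref{def: the substitution  sigma part 2}, using two ingredients: the fact that the twisted $G$-action on $\Mat_2(\Sig)$ is a genuine group action, and the well-definedness of $\sig$ on $\Sig$ guaranteed by Lemma~\ref{lem: stabilizer}.

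First I would record the associativity $(gh)M = g(hM)$ for every $M\in\Mat_2(\Sig)$ and $g,h\in G$. This is exactly the remark, made right after Definition~\ref{def: action on matrices}, that \eqref{eq: action on matrices} defines a $G$-action; concretely it follows by writing $g(hM) = \tgeo_g\tinn_{\psi(g)}\tgeo_h\tinn_{\psi(h)}M$, commuting $\tinn_{\psi(g)}$ past $\tgeo_h$ via \eqref{eq: commutation of tgeo and tinn}, and then using that $\tgeo$ and $\tinn$ are actions and that $\psi$ is a group homomorphism, to get $\tgeo_{gh}\tinn_{\psi(g)\psi(h)}M = \tgeo_{gh}\tinn_{\psi(gh)}M = (gh)M$. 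This is the only place where the twist by $\psi$ must be handled with any care; once associativity is available, the rest of the argument is formal.

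Now let $g\in G$ and $s\in\Sig$. By the decomposition \eqref{eq: Sigma splits to G-orbits} we may write $s = g's_0$ with $s_0\in\Sig_0$ and $g'\in G$, and by Definition~\ref{def: the substitution  sigma part 2} this gives $\sig(s) = g'\sig(s_0)$, the value being independent of the chosen pair $(g',s_0)$ precisely by Lemma~\ref{lem: stabilizer}. Then $gs = (gg')s_0$ is again of this form, so applying the very same definition to $gs$ yields $\sig(gs) = (gg')\sig(s_0)$. By the associativity recorded above, $(gg')\sig(s_0) = g\bigl(g'\sig(s_0)\bigr) = g\sig(s)$, which is \eqref{eq:equivariance}. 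The only subtlety worth flagging — and the reason Lemma~\ref{lem: stabilizer} is invoked here and not merely to make Definition~\ref{def: the substitution  sigma part 2} meaningful — is that we apply the defining formula of $\sig$ to two different $G$-representations of the involved tile, once for $s$ and once for $gs$, so its independence of the representative is exactly what legitimises the chain of equalities. There is no real obstacle beyond this bookkeeping; the substantive content has already been absorbed into the stabiliser computation of Lemma~\ref{lem: stabilizer}.
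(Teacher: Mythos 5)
Your proposal is correct and follows essentially the same route as the paper's proof: both reduce \eqref{eq:equivariance} to the defining formula of Definition~\ref{def: the substitution  sigma part 2} applied to the two representations $s=g's_0$ and $gs=(gg')s_0$, with well-definedness supplied by Lemma~\ref{lem: stabilizer}. You merely make explicit the associativity of the twisted $G$-action on $\Mat_2(\Sig)$, which the paper uses implicitly in the step $g\sig(g_0s_0)=gg_0\sig(s_0)$.
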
  
\begin{proof}
By Definition~\ref{def: the substitution  sigma part 2} this is almost a tautology, but there is still something to check: given $g\in G$ and $s\in \Sig$, write $s=g_0s_0$ for some $g_0\in G$ and $s_0\in \Sig_0$. Definition~\ref{def: the substitution  sigma part 2} gives
$$\sig(gs) = \sig(gg_0s_0) = gg_0\sig(s_0)\,.$$
On the other hand, by the same definition,
$$g\sig(s) = g\sig(g_0s_0) = gg_0\sig(s_0)\,,$$
which finishes the proof. 
\end{proof}
The readers are encouraged to calculate $\sig^l(\splus{0})$ for small values of $l$ by themselves or to look ahead at Example~\ref{example: first iterations} and Figure~\ref{fig:thm_infls}.

\subsubsection{The $4$-coding}
We now define a coding $\ka$ that replaces the tiles in $\Sig$ by matrices in $\Mat_5(\bF_2)$. The purpose of the coding is to generate a tiling over $\bF_2$. It does so by replacing each basic tile with a pixelated version of its graphic depiction.  Explicitly, let $\kappa:\Sigma\to\Mat_5(\ft)$ be the $5$-coding that is defined by
\begin{equation}\label{eq:kappa}
	\begin{aligned}
		\kappa(\tilo)&=
		\mat{
			0&0&0&0&0\\
			0&0&0&0&0\\
			0&0&0&0&0\\
			0&0&0&0&0\\
			0&0&0&0&0
		}
		,\quad
		&\kappa(\tilec{0})&=
		\mat{
			0&0&0&0&0\\
			0&0&0&0&0\\
			0&0&0&0&0\\
			1&0&0&0&0\\
			0&1&0&0&0
		}
		,\quad \\[3pt]
		\kappa\left(\tiler{a}\right)&=\mat{
			0&1&0&0&0\\
			0&0&1&0&1\\
			0&1&0&1&0\\
			1&0&1&0&0\\
			0&0&0&1&0
		}
		,\quad &\kappa\left(\tiler{b}\right)&=\mat{
			0&0&0&1&0\\
			1&0&1&0&0\\
			0&1&0&1&0\\
			0&0&1&0&1\\
			0&1&0&0&0
		}
		,\\[3pt]
		\kappa(\splus{0})&=\mat{
			0&1&0&0&0\\
			0&0&1&0&0\\
			0&1&0&1&0\\
			1&0&1&0&1\\
			0&1&0&0&0
		}
		,\quad &\kappa(\sminus{0})&=\mat{
			0&1&0&0&0\\
			1&0&1&0&0\\
			0&0&0&1&0\\
			0&0&0&0&1\\
			0&0&0&1&0
		}
		,
	\end{aligned}
\end{equation}
and so that the equation
\begin{equation}\label{eq:kappa-equivariance}
	\ka(\rho s) = \rho\ka(s)
\end{equation} 
holds for any $s\in \Sig$. Here the action of $\rho$ on the right hand side is the natural action as an isometry of the square (rotating by $90\degree$ clockwise).
It is useful to have in mind an image of the coding, which are black and white squares arranged in a matrix of size $5$ where each $0$ and $1$ is replaced by a white and black square, respectively. 
\begin{gather*}
\ka(\tilo) = 
\scalebox{0.22}{
	\begin{tikzpicture}
		\foreach \i in {0,...,5}{
			\draw (0,\i) -- (5,\i);
			\draw (\i,0)-- (\i,5);
		}
	\end{tikzpicture}
}\,,
\\[1.5pt]
\begin{align*}
\ka(\tilec{0}) = 
\scalebox{0.22}{
	\begin{tikzpicture}
		\foreach \i in {0,...,5}{
			\draw (0,\i) -- (5,\i);
			\draw (\i,0)-- (\i,5);
		}
		%
		\draw[fill] (0,1) rectangle (1,2);
		\draw[fill] (1,0) rectangle (2,1);
	\end{tikzpicture}
}\,,\quad
\ka(\tilec{1})  &=  
\scalebox{0.22}{
	\begin{tikzpicture}
	  \foreach \i in {0,...,5}{
		  \draw (0,\i) -- (5,\i);
		  \draw (\i,0)-- (\i,5);
		  }
	  %
	  \draw[fill] (0,3) rectangle (1,4);
	  \draw[fill] (1,4) rectangle (2,5);
	\end{tikzpicture}
	}\,,\quad
&\ka(\tilec{2}) &= 
\scalebox{0.22}{
	\begin{tikzpicture}
	  \foreach \i in {0,...,5}{
		  \draw (0,\i) -- (5,\i);
		  \draw (\i,0)-- (\i,5);
		  }
	  %
	  \draw[fill] (3,4) rectangle (4,5);
	  \draw[fill] (4,3) rectangle (5,4);
	\end{tikzpicture}
	}\,,\quad
\ka(\tilec{3}) =
\scalebox{0.22}{
	\begin{tikzpicture}
	  \foreach \i in {0,...,5}{
		  \draw (0,\i) -- (5,\i);
		  \draw (\i,0)-- (\i,5);
		  }
	  %
	  \draw[fill] (4,1) rectangle (5,2);
	  \draw[fill] (3,0) rectangle (4,1);
	\end{tikzpicture}
	}\,,\quad
\\[1.5pt]
\ka\left(\tiler{a}\right) &= 
\scalebox{0.22}{
	\begin{tikzpicture}
	  \foreach \i in {0,...,5}{
		  \draw (0,\i) -- (5,\i);
		  \draw (\i,0)-- (\i,5);
		  }
	  %
	\draw[fill] (1,2) rectangle (2,3);
	\draw[fill] (2,3) rectangle (3,4);
	\draw[fill] (2,1) rectangle (3,2);
	\draw[fill] (3,2) rectangle (4,3);
	\draw[fill] (3,0) rectangle (4,1);
	\draw[fill] (4,3) rectangle (5,4);
	\draw[fill] (1,4) rectangle (2,5);
	\draw[fill] (0,1) rectangle (1,2);
	\end{tikzpicture}
	}\,,\quad
&\ka\left(\tiler{b}\right) &= 
\scalebox{0.22}{
	\begin{tikzpicture}
	  \foreach \i in {0,...,5}{
		  \draw (0,\i) -- (5,\i);
		  \draw (\i,0)-- (\i,5);
		  }
	  %
	\draw[fill] (1,2) rectangle (2,3);
	\draw[fill] (2,3) rectangle (3,4);
	\draw[fill] (2,1) rectangle (3,2);
	\draw[fill] (3,2) rectangle (4,3);
	\draw[fill] (0,3) rectangle (1,4);
	\draw[fill] (3,4) rectangle (4,5);
	\draw[fill] (4,1) rectangle (5,2);
	\draw[fill] (1,0) rectangle (2,1);
	\end{tikzpicture}
	}\,,\quad
\\[1.5pt]
\ka(\splus{0}) = 
\scalebox{0.22}{
	\begin{tikzpicture}
	  \foreach \i in {0,...,5}{
		  \draw (0,\i) -- (5,\i);
		  \draw (\i,0)-- (\i,5);
		  }
	  %
	  \foreach \i in {1,...,4}{
		  \draw[fill] (\i,5-\i) rectangle (\i+1,5-\i+1);
		  }
	  \foreach \i in {0,1}{
		  \draw[fill] (\i,\i+1) rectangle (\i+1,\i+2);
		  \draw[fill] (\i+1,\i) rectangle (\i+2,\i+1);
		  }
	\end{tikzpicture}
	}\,,\quad
\ka(\splus{1}) &= 
\scalebox{0.22}{
	\begin{tikzpicture}[shift={(2.5,2.5)}, rotate=270]
	  \foreach \i in {0,...,5}{
		  \draw (0,\i) -- (5,\i);
		  \draw (\i,0)-- (\i,5);
		  }
	  %
	  \foreach \i in {1,...,4}{
		  \draw[fill] (\i,5-\i) rectangle (\i+1,5-\i+1);
		  }
	  \foreach \i in {0,1}{
		  \draw[fill] (\i,\i+1) rectangle (\i+1,\i+2);
		  \draw[fill] (\i+1,\i) rectangle (\i+2,\i+1);
		  }
	\end{tikzpicture}
	}\,,\quad
&\ka(\splus{2}) &=
\scalebox{0.22}{
	\begin{tikzpicture}[shift={(2.5,2.5)}, rotate=180]
	  \foreach \i in {0,...,5}{
		  \draw (0,\i) -- (5,\i);
		  \draw (\i,0)-- (\i,5);
		  }
	  %
	  \foreach \i in {1,...,4}{
		  \draw[fill] (\i,5-\i) rectangle (\i+1,5-\i+1);
		  }
	  \foreach \i in {0,1}{
		  \draw[fill] (\i,\i+1) rectangle (\i+1,\i+2);
		  \draw[fill] (\i+1,\i) rectangle (\i+2,\i+1);
		  }
	\end{tikzpicture}
	}\,,\quad
\ka(\splus{3}) = 
\scalebox{0.22}{
	\begin{tikzpicture}[shift={(2.5,2.5)}, rotate=90]
	  \foreach \i in {0,...,5}{
		  \draw (0,\i) -- (5,\i);
		  \draw (\i,0)-- (\i,5);
		  }
	  %
	  \foreach \i in {1,...,4}{
		  \draw[fill] (\i,5-\i) rectangle (\i+1,5-\i+1);
		  }
	  \foreach \i in {0,1}{
		  \draw[fill] (\i,\i+1) rectangle (\i+1,\i+2);
		  \draw[fill] (\i+1,\i) rectangle (\i+2,\i+1);
		  }
	\end{tikzpicture}
	}\,,
\\[1.5pt]
\ka(\sminus{0}) = 
\scalebox{0.22}{
	\begin{tikzpicture}
	  \foreach \i in {0,...,5}{
		  \draw (0,\i) -- (5,\i);
		  \draw (\i,0)-- (\i,5);
		  }
	  %
	  \foreach \i in {1,...,4}{
		  \draw[fill] (\i,5-\i) rectangle (\i+1,5-\i+1);
		  }
	  \draw[fill] (0,3) rectangle (1,4);
	  \draw[fill] (3,0) rectangle (4,1);
	\end{tikzpicture}
	}\,,\quad
\ka(\sminus{1}) &= 
\scalebox{0.22}{
	\begin{tikzpicture}[shift={(2.5,2.5)}, rotate=270]
	  \foreach \i in {0,...,5}{
		  \draw (0,\i) -- (5,\i);
		  \draw (\i,0)-- (\i,5);
		  }
	  %
	  \foreach \i in {1,...,4}{
		  \draw[fill] (\i,5-\i) rectangle (\i+1,5-\i+1);
		  }
	  \draw[fill] (0,3) rectangle (1,4);
	  \draw[fill] (3,0) rectangle (4,1);
	\end{tikzpicture}
	}\,,\quad
&\ka(\sminus{2}) &= 
\scalebox{0.22}{
	\begin{tikzpicture}[shift={(2.5,2.5)}, rotate=180]
	  \foreach \i in {0,...,5}{
		  \draw (0,\i) -- (5,\i);
		  \draw (\i,0)-- (\i,5);
		  }
	  %
	  \foreach \i in {1,...,4}{
		  \draw[fill] (\i,5-\i) rectangle (\i+1,5-\i+1);
		  }
	  \draw[fill] (0,3) rectangle (1,4);
	  \draw[fill] (3,0) rectangle (4,1);
	\end{tikzpicture}
	}\,,\quad
\ka(\sminus{3}) = 
\scalebox{0.22}{
	\begin{tikzpicture}[shift={(2.5,2.5)}, rotate=90]
	  \foreach \i in {0,...,5}{
		  \draw (0,\i) -- (5,\i);
		  \draw (\i,0)-- (\i,5);
		  }
	  %
	  \foreach \i in {1,...,4}{
		  \draw[fill] (\i,5-\i) rectangle (\i+1,5-\i+1);
		  }
	  \draw[fill] (0,3) rectangle (1,4);
	  \draw[fill] (3,0) rectangle (4,1);
	\end{tikzpicture}
	}\,.
\end{align*}
\end{gather*}


In practice, when one starts tessellating $\frakf(\tm)$ with images of the basic tiles under $\kappa$, one sees that adjacent coded tiles are laid so that they overlap in one row or column. This leads us to the following: 
\begin{definition}
The coding $\kappa_1:\Sig\to\Mat_4(\ft)$ is the $4$-coding that is the outcome of removing the last row and column from $\kappa$.
\end{definition}

One might think things would have been simpler had we worked directly with the $4$-coding $\ka_1$, without any reference to the $5$-coding. In fact, 
the symmetries of the $G$-action are only available when one considers the $5$-coding. Therefore, it is useful to allow an overlap of length $1$ between the images of $\kappa$ of adjacent basic tiles. The downside of this is that in order to actually use the symmetries of $\kappa$ with respect to the $G$-action it is necessary to prove that the images under $\kappa$ of adjacent basic tiles that appear in $\sigma^\infty(\splus{0})$ indeed agree on a border of length $1$ between them. As this is not required at the moment, the proof is postponed to Section~\ref{sec:applying-the-coding}.

Since the top left entry of 
$$\sigma(\splus{0})
= \mat{\splus{0}& \tilec{0}\\ \tiler{a}&\sminus{0}}$$
is $\splus{0}$, the iterations $\sigma^l(\splus{0})$ converge as $l\to\infty$ to a substitution tiling of $\bbn^2$, which is denoted by $\sigma^\infty(\splus{0})$.
We now restate our main theorem which the reader can finally understand.
\begin{theorem}
Let $\tm$ be the Thue-Morse sequence as in~\eqref{eq: Thue-Morse}.
The substitution $\sigma$ and coding $\kappa_1$ generate $\frakf(\tm)$. More precisely, $$\left\{\frakf_{m,n}(\tm)\right\}_{m=1,n=1}^\infty=\kappa_1\left(\sigma^\infty(\splus{0})\right).$$
\end{theorem}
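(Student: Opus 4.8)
The strategy is to prove that the tiling $W := \ka_1\left(\sig^\infty(\splus{0})\right)$ of $\bN^2$ by $\bF_2$ is a diagonally-aligned number wall whose generating sequence is the Thue--Morse sequence $\tm$, and then to invoke uniqueness. By the theory of (diagonally-aligned) number walls recalled in Section~\ref{sec:frame-relations}, the array $\frakf(\theta)$ is a diagonally-aligned number wall for every $\theta$ as in \eqref{eq:no-polynomial-part} --- this belongs to the basic theory of number walls and is ultimately rooted in the Desnanot--Jacobi identity for the Hankel minors \eqref{eq:tm_hankel} --- and a diagonally-aligned number wall is determined by its \emph{seed}, its restriction to the region $\{m\le n+1\}$ on and above the subdiagonal. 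On that region $\frakf(\theta)$ is completely explicit: it vanishes for $m\le n-2$, it equals $1$ on the superdiagonal since $\frakf_{n-1,n}(\theta)=\frakh_{0,n}(\theta)=1$, and on the subdiagonal it records the coefficients, $\frakf_{n+1,n}(\theta)=b_n$. Hence it suffices to prove (i) that $W$ satisfies the frame relations everywhere, and (ii) that the restriction of $W$ to $\{m\le n+1\}$ coincides with this explicit seed for $\theta=\tm$, the only nontrivial point being that the subdiagonal of $W$ reads off $\tm$.

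I would treat (ii) first. Since $\splus{0}$ occupies the top-left entry of $\sig(\splus{0})$, the iterates $\sig^l(\splus{0})$ stabilise on growing top-left corners, so $\sig^\infty(\splus{0})$ is well defined, and its restriction to a neighbourhood of the diagonal is controlled by finitely much data through the self-similarity of $\sig$. The key observation is that the sequence of tiles on the main diagonal of $\sig^\infty(\splus{0})$, read under the dictionary $\splus{0}\leftrightarrow\tilea$ and $\sminus{0}\leftrightarrow\tileb$, is the fixed point $\mu^\infty(\tilea)$: indeed $\sig(\splus{0})$ has diagonal $(\splus{0},\sminus{0})$ and $\sig(\sminus{0})=\iota\sig(\splus{0})$ has diagonal $(\sminus{0},\splus{0})$, which under the dictionary are exactly $\mu(\tilea)$ and $\mu(\tileb)$ --- here the involution $\iota$ in $G$ is precisely the reflection, on the tiling side, of the letter-exchange symmetry $\tilea\leftrightarrow\tileb$ of $\mu$. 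Applying $\ka_1$ --- equivalently, applying the $5$-coding $\ka$ with an overlap of $1$ along shared edges, which is legitimate by the border-agreement proven in Section~\ref{sec:applying-the-coding} --- and reading the subdiagonal entries of $W$ off the diagonal $\ka_1$-blocks and the blocks immediately adjacent to them, one checks directly that they reproduce $\tau$ applied to $\mu^\infty(\tilea)$, i.e.\ the Thue--Morse sequence; the remaining seed entries of $W$ come out as the universal $0$'s and $1$'s recorded above.

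Part (i) is the heart of the argument. Two features of the construction make the verification of the frame relations feasible. First, self-similarity: every finite sub-pattern of $\sig^\infty(\splus{0})$ occurs inside $\sig^L(s)$ for one fixed $L$ and some $s\in\Sig$, and for any fixed size the set of tile-patterns occurring in $\sig^\infty(\splus{0})$ is finite and computable; composing with $\ka$ (again via the border-agreement lemma), this turns the verification of a frame relation with a bounded window into a finite computation. Second, symmetry: by Proposition~\ref{prop: equivariance} and the equivariance \eqref{eq:kappa-equivariance} of $\ka$, the family of patterns occurring in $W$ is $G$-invariant, and since the frame relations are invariant under the isometries of the square, the finite check descends to $D$-orbit representatives, the involution $\iota$ further organising the enumeration through its compatibility with $\sig$. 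The delicate point, which I expect to be the main obstacle, is that the frame theorem also constrains the boundaries of \emph{arbitrarily large} maximal windows of zeros, so a bounded-window check is not by itself enough. One must show that every large window of zeros in $W$ arises as the $\ka$-image, after finitely many applications of $\sig$, of a patch built from the tile $\tilo$ --- whose code is the zero matrix and which $\sig$ maps to the all-$\tilo$ matrix --- that its frame is the $\ka$-image of the surrounding tiles, and that the frame relation at large scale thereby reduces to the frame relation at the base scale. Establishing this compatibility of $\sig$ and $\ka$ with the number-wall structure at every scale --- which is exactly what the specific choices of $\Sig$, $\sig$ and $\ka$ were designed to achieve --- carries most of the technical weight; granting it, (i) follows by induction on scale, and combining (i) with (ii) finishes the proof.
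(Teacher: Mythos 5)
Your proposal follows the same route as the paper: identify $\ka_1\left(\sig^\infty(\splus{0})\right)$ as a diagonally-aligned number wall whose subdiagonal seed is $\tm$ and invoke uniqueness (the converse part of Theorem~\ref{thm:Lunnon-for-flats}); your part (ii) is the paper's Lemma~\ref{lem:kappa1sigma0_diagonal}, and your part (i) is Theorem~\ref{thm:main2-reduced}. The one step you explicitly defer --- ``granting'' that large windows of zeros reduce to the base scale --- is exactly where the paper's technical weight sits, so you should know what is needed to close it. The induction on $l$ goes through because of two facts: (a) the rows and columns of the quarters $\sig^{l}(s)$ that meet the boundaries inside $\sig^{l+1}(s)$ are periodic with explicit period $3$ (Lemmas~\ref{lem:sigmal0} and~\ref{lem:sigmal8}), and their $\ka_1$-images contain no run of more than $5$ consecutive zeros; hence any inner or outer frame pattern not contained in a single quarter has size at most $5$ and lies inside a periodic middle strip, where the relations become a finite check. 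And (b) the one genuinely large window per scale --- the diamond of $\tilo$'s at the centre of $\sig^{l}\left(\tiler{a}\right)$ --- has its frame of $1$'s computed explicitly in Lemma~\ref{lem: kappa1 of r0}, and over $\ft$ its outer frame relation reduces to $E=H$ and $F=G$, which follows from the four-quarter symmetry $\ka_1\left(\sig^{l}\left(\tiler{a}\right)\right)=\br{\begin{smallmatrix}M_2&\rho^2M_1\\ M_1&\rho^2M_2\end{smallmatrix}}$. With that mechanism supplied, your outline coincides with the paper's proof; without it, the induction does not close, since a bounded-window check alone cannot control the corners of arbitrarily large windows.
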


The coordinatewise equality of the two tilings of $\bbn^2$ by $\ft$ that appear in this theorem was verified with a computer program for large portions of $\bbn^2$. In particular, it was verified for the part of these tilings that appears in Figure \ref{fig:thm_flat}. The main point of this theorem is that it holds for all $(m,n)\in\bbn^2$.

\subsection{Structure of the paper}\label{sec:structure}
The rest of the paper is organised as follows: Tilings like 
\eqref{eq:tm_flat_array} are characterised by certain arithmetic relations called \emph{frame relations}. These relations are recalled in Section \ref{sec:frame-relations}. Section \ref{sec:tiling} contains the proof of Theorem \ref{thm:main2}. Our proof is based on the frame relations and a detailed analysis of the tilings $\sigma^\infty(\splus{0})$ and $\kappa_1\left(\sigma^\infty(\splus{0})\right)$, and this analysis relies on the symmetries of $\sigma$ and $\kappa$ with respect to the $G$-action for square matrices over $\Sig$ as described in \eqref{eq:equivariance} and \eqref{eq:kappa-equivariance}. Our main theorem is applied in Section \ref{sec:uniform-escape-of-mass} in order to get a recursive formula for the escape of mass along sequences of left-shifts of $\tm$ and for the proof Theorem \ref{thm:main}. 

\section{The frame relations}\label{sec:frame-relations}
This section is devoted to arithmetic relations that characterise arrays such as 
\eqref{eq:tm_flat_array}. A version of this characterisation is studied by Lunnon \cite{L}. It plays a central role in \cite{ALN}, and it also plays a central role in the proof of Theorem \ref{thm:main2}.

The following terminology is 
standard 
and convenient for introducing these relations. For simplicity, it is only phrased in the context of subsets of $\bbz^2$ and functions to $\fq$:
\begin{definition}
	\begin{enumerate}
		\item A \emph{shape} is a subset $S\sub\bbz^2$.
		\item A \emph{pattern} is a function $P:S\to\fq$ where $S$ is a shape. In this case $P$ is a \emph{pattern of shape $S$}.
		\item If $S\sub S'$ are shapes and $P$ and $P'$ are patterns of shapes $S$ and $S'$, respectively, then \emph{$P$ is contained in $P'$} if $\left.P'\right|_S=P$. 
	\end{enumerate}
\end{definition}

We will mostly be dealing with rectangular shapes that are aligned with the standard axes of the plane or along axes which are rotated by $45\degree$. In order to describe the latter ones it is convenient to have a $45\degree$ rotation of the plane with integer coefficients. Since we think of the coordinates of the plane as row vectors, it will be defined as a linear action on row vectors. Denote
\begin{equation}\label{eq: R rotation}
	R(x,y): = (x,y)\mat{1& -1\\ 1&1}=(x+y, y-x)\,.
\end{equation}

\begin{definition}
	\begin{enumerate}
		\item An \textit{axes-aligned rectangle} is a shape $S$ for which there exist
		$x_0,y_0\in\bbz$ and $x,y\in\bbn$ for which 
		$$S = \left([1,x]\times[1,y]+(x_0,y_0)\right)\cap\bbz^2\,.$$
		Its \emph{height} and \emph{width} are $x$ and $y$. It is a \emph{square} if $x=y$, and then its \emph{size} is equal to its height and width.
		\item  A \emph{diagonally-aligned rectangle} is a shape $S$ for which there exist
		$x_0,y_0\in\bbz$ and $x,y\in\bbn$ for which 
		$$S = \left(R([1,x]\times[1,y])+(x_0,y_0)\right)\cap\bbz^2\,.$$
		Its \emph{height} and \emph{width} are $x$ and $y$. 
		It is a \emph{diagonally-aligned square} if $x=y$, and then its \emph{size} equals its height and width.
		\item A pattern is \emph{axes-aligned \emph{(respectively, \emph{diagonally-aligned})} rectangular} if its shape is an axes-aligned (respectively, diagonally-aligned) rectangle. In this case, its \emph{height} and \emph{width} are those of its shape. Axes-aligned rectangular patterns are also referred to as matrices. 
	\end{enumerate}
\end{definition}
\subsection{Number walls}\label{sec:number-walls}
\begin{definition}\label{def:frame-relations} A \emph{frame pattern} and the corresponding \emph{frame relation} is one of the following three:
	\begin{enumerate}
		\item A \emph{plus pattern} is a diagonally-aligned square pattern of size $2$ 
		$$\begin{array}{ccc}
			&A&\\
			B&E&C\\
			&D&
		\end{array}.$$
		It satisfies the \emph{plus relation} if 
		$$E^2 = AD + BC\,.$$ 
		\item \label{item:frame-relations-2}
		An \emph{inner frame pattern} is a pattern of the form
		$$\begin{array}{cccccccc}
			&&&A&&&&\\
			&&&0&&&&\\
			&&\iddots&&\ddots&&&\\
			B&0&&&&\ddots&&\\
			&&\ddots&&&&0&C\\
			&&&\ddots&&\iddots&&\\
			&&&&0&&&\\
			&&&&D&&&\\
		\end{array}$$
		where the $0$'s form a diagonally-aligned rectangle of height $k$ and width $w$ (the $\ell_\infty$-distances between the coordinates of $A$ to $B$ and $C$, respectively). It satisfies the \emph{inner frame relation} if 
		$$AD = (-1)^{k(k+w-1)} BC\,,$$
		\item \label{item:frame-relations-3}
		An \emph{outer frame pattern} is a pattern of the form
		$$\begin{array}{cccccccccc}
			&&&&E&&&&&\\
			&&&&A&A'&&&&\\
			&&&&0&&&&&\\
			&&&\iddots&&\ddots&&&&\\
			F&B&0&&&&\ddots&&C'&\\
			&B'&&\ddots&&&&0&C&G\\
			&&&&\ddots&&\iddots&&&\\
			&&&&&0&&&&\\
			&&&&D'&D&&&&\\
			&&&&&H&&&&\\
		\end{array}$$
		with $ABCD\neq0$. The diagonally-aligned rectangular pattern whose corners are $A,B,C,D$ is the corresponding inner frame pattern. It satisfies the \emph{outer frame relation} if
		$$\left(EB'+\left(-1\right)^kFA'\right)CD=\left(HC'+\left(-1\right)^kGD'\right)AB\,.$$
	\end{enumerate}
\end{definition}

\begin{definition}
	A \emph{number wall} is a pattern which satisfies the frame relations of all patterns which it contains.
\end{definition} 

Note that the plus relations force any two diagonally adjacent zeros to propagate into a $2\times2$ square pattern of zeros. This implies that the zero patterns in number walls are arranged in rectangles. 
The inner frame relations ensure that they are squares -- because any diagonally-aligned rectangular pattern of $0$'s $P$ propagates by the plus relation to the smallest square pattern containing $P$. 

\ignore{
The zero patterns in number walls are called \emph{windows}. For an outer frame pattern as in Definition \ref{def:frame-relations} \eqref{item:frame-relations-2}, the \emph{size} of the window is $d+k-1$ where $d,k$ are the side lengths of the diagonally rectangular pattern
in the pattern involved in the outer frame relation. This is exactly the size of the side length of the corresponding 
maximal square window to which the diagonally rectangular pattern of $0$'s propagates to. Note that the maximality follows from the requirement $ABCD\ne 0$.
}

Number walls are tightly connected to Toeplitz determinants. This connection is the reason they are key to our discussion.
For any $\theta = \{b_n\}_{n=1}^\infty\in \fq^\bbn$
denote 
\begin{equation*}
	\frakt\left(\theta\right)\defeq\left\{\frakt_{m,n}\left(\theta\right)\right\}_{m>-\infty, n\geq|m+1|},
\end{equation*}
where for any $n>m\geq0$,
\begin{equation}\label{eq:toeplitz}
	\frakt_{m,n}\left(\theta\right) \defeq \det
	\begin{pmatrix}
		b_{n}&b_{n+1}&\cdots &b_{n+m}\\
		b_{n-1}&\ddots&\ddots&\vdots\\
		\vdots&\ddots&\ddots&b_{n+1}\\
		b_{n-m}&\cdots&b_{n-1}&b_n
	\end{pmatrix},
\end{equation}
and $\frakt_{-1,n}=1$ for all $n\geq0$ and $\frakt_{m,n}=0$ for all $m<-1$ and $n\geq-m-1$. 
The following theorem follows from \cite[page 11]{L} (see also \cite[Corollary 3.6]{ALN}):
\newcommand{\frakg}{\mathfrak{G}}
\begin{theorem}\label{thm:Lunnon}
	For every $\theta\in\fq^\bbn$ the array $\frakt\left(\theta\right)$ is a number wall. Conversely, if $\frakg=\left\{\frakg_{m,n}\right\}_{m>-\infty,n\geq|m+1|}$ is a number wall which satisfies $\frakg_{-1,n}=1$ for all $n\geq0$ and $\frakg_{m,n}=0$ for all $m<-1$ and $n>-m-1$, then the sequence $\theta=\left\{\frakg_{0,n}\right\}_{n=1}^\infty$ satisfies $\frakt\left(\theta\right) = \frakg$.
\end{theorem}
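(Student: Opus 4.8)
The statement to prove is Theorem~\ref{thm:Lunnon}: for every $\theta\in\fq^\bbn$ the array $\frakt(\theta)$ is a number wall, and conversely a number wall satisfying the boundary conditions recovers its generating sequence via Toeplitz determinants.

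The plan is to establish the forward direction by verifying each of the three frame relations directly from determinant identities, and then deduce the converse by an induction that reconstructs the number wall row by row from its top two rows.

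For the forward direction, I would first set up notation: fix $\theta=\{b_n\}$ and write $\frakt_{m,n}$ for the Toeplitz determinant in~\eqref{eq:toeplitz}, which is the determinant of the $(m+1)\times(m+1)$ matrix with $(i,j)$ entry $b_{n+j-i}$ (indices $1\le i,j\le m+1$). The key observation is that $\frakt_{m,n}$, $\frakt_{m,n\pm1}$, $\frakt_{m\pm1,n}$ and $\frakt_{m\pm1,n\pm1}$ are all minors of one slightly larger Toeplitz-type matrix, obtained by bordering. The plus relation $E^2=AD+BC$ is then exactly the Desnanot--Jacobi (Dodgson condensation) identity: for a square matrix $M$, $\det M\cdot\det M^{1,m+1}_{1,m+1}=\det M^1_1\cdot\det M^{m+1}_{m+1}-\det M^1_{m+1}\cdot\det M^{m+1}_1$, where superscripts/subscripts denote deleted rows/columns. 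Applying this to the $(m+2)\times(m+2)$ Toeplitz block centered appropriately, the four ``corner'' minors and the central minor become precisely $A,D,B,C$ and $E$ (up to checking the index bookkeeping, and a sign which works out because of the Hankel/Toeplitz symmetry). The inner frame relation is the statement that a square window of zeros is flanked by values $A,B,C,D$ with $AD=(-1)^{k(k+w-1)}BC$; this should follow by iterating the plus relation around the boundary of the window (each plus relation on the window's edge, where three of the five entries vanish, forces a relation $E'^2 = (\pm)(\text{product})$ degenerating to sign-propagation), i.e.\ one propagates the relation from corner $A$ around to $D$ picking up one sign per step, the total number of steps accounting for the exponent $k(k+w-1)$. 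Alternatively, and perhaps more cleanly, I would cite the combinatorial structure: since by the plus relation zeros propagate to rectangles and then (by a short argument) to squares, the inner frame relation reduces to the case $w=1$ or to a direct minor computation. The outer frame relation is the most involved: it relates the values $E,A',B,B',F$ on one ``side'' of a window to $H,C',D',C,G$ on the other, and I expect it to follow from a longer Plücker-type / Desnanot--Jacobi identity applied to a matrix bordered on two sides, exploiting that the interior minors vanish.

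The hard part will be the outer frame relation and the sign bookkeeping throughout. Tracking exactly which minor of which bordered matrix equals which letter in the frame pattern, and verifying the sign $(-1)^{k(k+w-1)}$ and the $(-1)^k$ in the outer relation, is delicate because of the $45^\circ$ rotation built into the diagonally-aligned shapes and the row/column orientation convention fixed in the paper. I would handle this by carefully translating the diagonally-aligned coordinates back to the $(m,n)$ indices of $\frakt$ via~\eqref{eq: R rotation}, so that ``diagonally adjacent'' in the picture becomes ``$m\mapsto m\pm1$ with $n$ fixed'' or similar, and then the frame patterns become honest statements about consecutive rows of the Toeplitz array.

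For the converse, suppose $\frakg$ is a number wall with $\frakg_{-1,n}=1$ and $\frakg_{m,n}=0$ for $m<-1$. Set $\theta=\{\frakg_{0,n}\}_{n\ge1}$. I would prove $\frakg_{m,n}=\frakt_{m,n}(\theta)$ by induction on $m$. The base cases $m=-1$ and $m=0$ hold by definition. For the inductive step, I want to determine $\frakg_{m+1,n}$ from the rows of index $\le m$. Where the relevant $\frakt_{m,\cdot}$ values are nonzero, the plus relation $E^2=AD+BC$ (with $E$ in row $m$, $A,D$ in rows $m-1,m+1$) solves uniquely for the row-$(m+1)$ entry $D$ in terms of known quantities; since $\frakt(\theta)$ is already known (from the forward direction) to be a number wall satisfying the same plus relations and the same boundary data, $\frakg$ and $\frakt(\theta)$ must agree there. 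Where $\frakt_{m,\cdot}=0$, i.e.\ inside a window, the value $\frakg_{m+1,n}$ is forced to be $0$ (by the propagation of zeros that the plus relations enforce) until one reaches the bottom edge of the window, where the inner and outer frame relations pin down the flanking nonzero values; again these are the same relations satisfied by $\frakt(\theta)$, so the two arrays coincide. The only genuine subtlety is the well-posedness of this ``row-by-row'' determination at the left boundary $n=|m+1|$ and across window boundaries, which is exactly what the inner and outer frame relations are designed to guarantee, so invoking them closes the induction.
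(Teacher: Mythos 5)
The first thing to note is that the paper does not actually prove Theorem~\ref{thm:Lunnon}: it is quoted from Lunnon \cite[page 11]{L} and from \cite[Corollary 3.6]{ALN}, so there is no in-paper argument to compare yours against. Judged on its own terms, your outline follows the standard route, and its first step is correct: the plus relation is exactly the Desnanot--Jacobi identity applied to the $(m+2)\times(m+2)$ Toeplitz matrix, whose central minor, four corner minors and full determinant match $A,B,C,D,E$ as you describe (and for a Toeplitz array there is no residual sign to chase).

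The genuine gap is in the inner and outer frame relations, and correspondingly in the converse. Iterating the plus relation around a window does not yield the inner frame relation. What the plus relation gives at a non-corner entry $E$ of the frame is $E^2=BC$ (the neighbour inside the window vanishes), i.e.\ each edge of the frame is a geometric sequence; but at a corner of the frame the plus relation involves two entries lying \emph{outside} the frame, so it produces no closed relation among frame entries and cannot couple the ratios of different edges --- which is precisely what $AD=(-1)^{k(k+w-1)}BC$ does. That identity, and a fortiori the outer frame relation, requires a separate argument: in Lunnon's treatment it comes from the local linear recurrence satisfied by the sequence across a window (equivalently the block structure of the Pad\'e table), not from condensation. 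The same missing ingredient resurfaces in your converse: the row-by-row induction is fine where the entry above is nonzero (solve $E^2=AD+BC$ for $D$), but below a window the plus relation degenerates, and the next row is pinned down exactly by the outer frame relation together with the geometric structure of the frame edges; saying that the relations ``are designed to guarantee'' well-posedness there restates Lunnon's reconstruction theorem rather than proving it. So the proposal is a reasonable plan whose two hard steps --- deriving the frame relations for Toeplitz arrays and proving uniqueness across windows --- are asserted rather than established.
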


The frame relations in Definition \ref{def:frame-relations} connect the values on the corners of diagonally-aligned rectangular patterns. Therefore, if one is looking for a simple structure in a number wall, it is natural to look for it in directions parallel to axes of slope $\pm 1$. 
This point was already noticed by Lunnon \cite{L09}. An example of a number wall over $\bbf_3$ with such a structure is suggested in \cite[Appendix C]{L09}. Recently, this approach was employed by Garrett and Robertson \cite{GR} where Lunnon's work is extended to fields of characteristic $5$, $7$ and $11$.

A slightly different approach is taken in this paper. Instead of looking for a structure along lines of slopes $\pm1$, the number wall itself is rotated by $45\degree$ clockwise.
The original tiles fit points of $\bbz^2$ for which all the $\ell_1$-distances are even, while the rest of the points are filled with zeros. Apart from making the structure axes-aligned, the resulting tiling is natural from a dynamical point of view. In short, after the rotation, the minimal $\ell_1$-distance to a nonzero entry is precisely the height in the homogeneous space \eqref{eq:homogeneous}. This point of view 
gives another reason to the zero values that are introduced in-between the tiles of the original number wall. This part of the story is not essential for the proofs of Theorems \ref{thm:main} and \ref{thm:main2}. Hence, we choose not to elaborate further on it.

\subsection{Diagonally-aligned number walls}
We now define a family of tilings which we call \emph{diagonally-aligned number walls} and are obtained from number walls by a simple procedure of rotation, inflation and translation, which we now describe formally. If needed, and without a risk of confusion, number walls as in Section~\ref{sec:number-walls} will be referred to as axes-aligned number walls, and the term number wall will refer to both axes-aligned and diagonally-aligned ones according to the context. Recall the definition of the linear map $R$ in \eqref{eq: R rotation}.

\begin{definition}\label{def: rotated shape}
The \emph{inflated rotation of a shape $S$} is the shape
$$
S^R \defeq R(S)\sqcup \left\{v\in\bbz^2\sep  \av{R(S)\cap\{v\pm (1,0),v\pm (0,1)\}}\geq2\right\}.
$$
For any $v_0\in\bbz^2$, the \emph{inflated rotation of a pattern $P$ translated by $v_0$} is the pattern of shape $S^R+v_0$ which is defined by
$$P^{R,v_0}(v) = \begin{cases}
P\left(R^{-1}(v-v_0)\right)& \mbox{ if }v\in R(S)+v_0\\
0 &\mbox{ if }v\in\left(S^R\setminus R(S)\right)+v_0
\end{cases}.$$
\end{definition}
Since for any shape $S$ the image $R(S)$ is contained in $\set{(m,n)\in \bZ^2 \sep m \equiv n(2)}$, and since the inverse map of $R$ is
\begin{equation*}
	R^{-1}(x,y) = (x,y)\mat{\frac12& \frac12\\[2pt] -\frac12&\frac12}=\left(\frac{x-y}{2}, \frac{x+y}{2}\right),
\end{equation*}
if $P$ is a pattern over $S$ then every $(m,n)\in S^R$ satisfies
\begin{equation}\label{eq:explicit}
P^R(m,n) = \begin{cases}
P\left(\frac{m-n}{2}, \frac{m+n}{2}\right)& \mbox{ if }m \equiv  n(2)\\
0&\mbox{ if }m \not\equiv  n(2)
\end{cases}.
\end{equation}

\begin{definition}
A pattern $P$ is a \emph{diagonally-aligned number wall} if there exist a number wall $P'$ and a vector $v_0\in\bbz^2$ such that $P=(P')^{R,v_0}$.
\end{definition} 


For any $\theta \in\fq^\bbn$, define 
$$
\frakf(\theta) \defeq \frakt(\theta)^{R,(1,0)}\,.
$$ 
By \eqref{eq:explicit}, the explicit formula is 
\begin{equation}\label{eq:explicit2}	\frakf_{m,n}\left(\theta\right)=\begin{cases}
\frakt_{\frac{m-n-1}{2},\frac{m+n-1}{2}}\left(\theta\right)&m\not \equiv n(2) \\
	0&m\equiv n(2)
\end{cases}
\end{equation}
for every $m\geq0$ and $n\geq1$.
\ignore{
and
\begin{equation*}
	\frakf\left(\theta\right)\defeq\left\{\frakf_{m,n}\left(\theta\right)\right\}_{m=0,n=0}^\infty\,.
\end{equation*}
}
It should be clear from the definition of the Hankel and Toeplitz determinants in \eqref{eq:tm_hankel} and \eqref{eq:toeplitz} (and their extension to nonpositive and negative rows, respectively) that the relation between the Hankel and Toeplitz tilings of $\theta$ is 
$$
\frakt_{m,n}(\theta) = (-1)^{\tfrac{m(m+1)}{2}}\frakh_{m+1,n-m}(\theta)\,.$$ 
Hence, equation \eqref{eq:explicit2} is consistent with the definition \eqref{eq: frakf tile} of $\frakf(\theta)$, which was already given in the introduction in terms of Hankel determinants.

The following analogue of Theorem \ref{thm:Lunnon} for diagonally-aligned number walls 
is a direct corollary of it.
\begin{theorem}\label{thm:Lunnon-for-flats}
	For every $\theta\in\fq^\bbn$ the array $\frakf\left(\theta\right)$ is 
	a diagonally-aligned number wall. Conversely, if $\left\{\frakg_{m,n}\right\}_{m\geq0, n\geq1}$ is a diagonally-aligned number wall which satisfies $\frakg_{n-1,n}=1$ for all $n\geq1$ and $\frakg_{m,n}=0$ if for all $m\geq0$ and $n>m+1$, then the sequence $\theta=\left\{\frakg_{n+1,n}\right\}_{n=1}^\infty$ satisfies $\frakf\left(\theta\right) = \left\{\frakg_{m,n}\right\}_{m\geq0, n\geq1}$.
\end{theorem}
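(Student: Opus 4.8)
The plan is to transfer Theorem~\ref{thm:Lunnon} across the bijection induced by the linear map $R$ of~\eqref{eq: R rotation}; once the change of coordinates is in place, both assertions reduce to bookkeeping. The first assertion is immediate: by the first half of Theorem~\ref{thm:Lunnon}, $\frakt(\theta)$ is an axes-aligned number wall, and since $\frakf(\theta)$ is by definition the inflated rotation $\frakt(\theta)^{R,(1,0)}$, it is a diagonally-aligned number wall by definition.

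For the converse, let $\frakg=\{\frakg_{m,n}\}_{m\ge0,\,n\ge1}$ satisfy the stated hypotheses. By definition of a diagonally-aligned number wall, $\frakg=(P')^{R,v_0}$ for some axes-aligned number wall $P'$, with shape $S\defeq\operatorname{dom}(P')$, and some $v_0\in\bbz^2$. First I would normalise $v_0$. By Definition~\ref{def: rotated shape} the array $(P')^{R,v_0}$ vanishes on all of the inflation points $(S^R\setminus R(S))+v_0$, whereas by hypothesis $\frakg_{n-1,n}=1\neq0$ for every $n\ge1$; since all the points $(n-1,n)$ have coordinate-sum of the same parity, they must therefore all lie in $R(S)+v_0$. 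As $R$ maps $\bbz^2$ bijectively onto the sublattice of points with even coordinate-sum, this forces $v_0-(1,0)\in R(\bbz^2)$. Since the inflated rotation commutes with translations and the class of axes-aligned number walls is translation-invariant (frame relations depend only on relative positions), I may replace $P'$ by a suitable translate and assume $v_0=(1,0)$, so that $\frakg=(P')^{R,(1,0)}$.

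Next I would push the hypotheses on $\frakg$ through to $P'$ via~\eqref{eq:explicit}; inverting $R$ gives $P'_{m',n'}=\frakg_{\,m'+n'+1,\ n'-m'}$ (equivalently $m'=\tfrac{m-n-1}{2}$, $n'=\tfrac{m+n-1}{2}$, as recorded in~\eqref{eq:explicit2}). Under this substitution the domain $\{m\ge0,\,n\ge1\}$ of $\frakg$ forces $S=\{m'>-\infty,\ n'\ge|m'+1|\}$; the condition $\frakg_{n-1,n}=1$ for $n\ge1$ becomes $P'_{-1,n'}=1$ for $n'\ge0$; and $\frakg_{m,n}=0$ whenever $n>m+1$ becomes $P'_{m',n'}=0$ whenever $m'<-1$ and $n'>-m'-1$. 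These are exactly the hypotheses of the converse half of Theorem~\ref{thm:Lunnon}, which therefore applies to $P'$: setting $\theta\defeq\{P'_{0,n'}\}_{n'=1}^\infty$ we obtain $\frakt(\theta)=P'$. Finally $P'_{0,n'}=\frakg_{n'+1,n'}$, so $\theta=\{\frakg_{n+1,n}\}_{n=1}^\infty$, and $\frakf(\theta)=\frakt(\theta)^{R,(1,0)}=(P')^{R,(1,0)}=\frakg$, as asserted.

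I do not expect a genuine obstacle --- the statement is a corollary of Theorem~\ref{thm:Lunnon}. The only steps that require care are the two pieces of coordinate bookkeeping: checking that the non-vanishing at the points $(n-1,n)$ pins down the parity of $v_0$, and checking that the domain together with the two boundary conditions transform into precisely the hypotheses of Theorem~\ref{thm:Lunnon}; both are routine computations with the explicit formulae for $R$ and $R^{-1}$.
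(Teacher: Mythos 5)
Your argument is correct and follows exactly the route the paper intends: the paper gives no proof, stating only that the result ``is a direct corollary of'' Theorem~\ref{thm:Lunnon}, and your write-up supplies precisely the omitted bookkeeping (normalising $v_0$ via the parity of the nonzero diagonal $\frakg_{n-1,n}$, and transporting the domain and boundary conditions through $R^{-1}$). The coordinate computations check out, so nothing further is needed.
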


When one takes Definition~\ref{def:frame-relations} and applies the linear map $R$ as above, one obtains the following:

\begin{definition}\label{def:flat-frame-relations} 
A \emph{diagonally-aligned frame pattern} and the corresponding \emph{frame relation} is one of the following four:
	\begin{enumerate}
		\item \label{item: FR inflation}
		A \emph{parity pattern} is a rectangular pattern of height and width $1$ and $2$ or $2$ and $1$, i.e.,
		\begin{align*}
			&\begin{array}{cc} 
				B&A
			\end{array},
			&&\textrm{ or }& 
			&\begin{array}{c} 
				A\\
				C
			\end{array}.
		\end{align*}
		It satisfies the \emph{parity relation} if $AB=0$ or $AC=0$, respectively.
		\item\label{item: FR ex} An \emph{ex pattern} is a pattern of the form 
		$$\begin{array}{ccc}
			B&0&A\\
			0&E&0\\
			D&0&C
		\end{array}.$$
		It satisfies the \emph{ex relation} if 
		$$E^2 = AD + BC\,.$$ 
		\item \label{item:flat-frame-relations-2} 
		An \emph{inner frame pattern}  is a rectangular pattern of the form
		$$\begin{array}{ccccc}
			B&0&\cdots&0&A\\
			0&\cdots&&\cdots&0\\
			\vdots&&&&\vdots\\
			\vdots&&&&\vdots\\
			0&\cdots&&\cdots&0\\
			D&0&\cdots&0&C\\
		\end{array}.$$
		It satisfies the \emph{inner frame relation} if 
		$$AD = (-1)^{\frac{k-1}{2} \frac{k+z-4}{2}} BC\,,$$
		where $z$ and $k$ are the height and width of the rectangle (the distances between the coordinates of $A$ to $C$ and $B$, respectively, plus $1$). Note that if $ABCD=0$ then the sign term 
		can be ignored, while if $ABCD\neq0$ and additionally the parity relations as in Definition~\ref{def:flat-frame-relations}~\eqref{item: FR inflation} are satisfied, then $z$ and $k$ must be even. 
		\item\label{item: FR outer} An \emph{outer frame pattern} is a rectangular pattern of the form 
		$$\begin{array}{ccccccccc}
			F&0&*&0&\cdots&0&*&0&E\\
			0&B&0&\cdots&&\cdots&0&A&0\\
			B'&0&\cdots&&&&\cdots&0&A'\\
			0&\cdots&&&&&&\cdots&0\\
			\vdots&&&&&&&&\vdots\\
			\vdots&&&&&&&&\vdots\\
			0&\cdots&&&&&&\cdots&0\\
			D'&0&&&&&&0&C'\\
			0&D&0&\cdots&&\cdots&0&C&0\\
			H&0&*&0&\cdots&0&*&0&G
		\end{array}$$
		with $ABCD\neq0$. The rectangular pattern whose corners are $A,B,C,D$ is the corresponding 
		inner frame pattern. It satisfies the \emph{outer frame relation} if
		$$\left(EB'+\left(-1\right)^{\frac{k-1}{2}}FA'\right)CD=\left(HC'+\left(-1\right)^{\frac{k-1}{2}}GD'\right)AB\,.$$
		where $k$ is the corresponding width of the inner frame.
	\end{enumerate}
\end{definition}
Definition~\ref{def:flat-frame-relations} 
allows us to directly check if a pattern is a diagonally-aligned number wall, by checking that all the frame patterns it contains satisfy the 
frame relations.
\ignore{
\begin{definition}
	A \emph{diagonally-aligned number wall} is a pattern which satisfies the diagonally-aligned frame relations of all patterns which it contains.
\end{definition} 
}
A few remarks are in order. 
\begin{itemize}
	\item The parity relations, which do not appear in the definition of axes-aligned number walls, simply guarantees that our pattern (at least as long as its shape is connected) agrees with a pattern of the form $P^{R,v_0}$ for some pattern $P$ and translation vector $v_0\in\bbz^2$.
	\item The ex relations guarantee that in diagonally-aligned number walls the zeros appear in diagonally-aligned rectangles.
	\item The inner frame relations make sure that the patterns of zeros are diagonally-aligned squares, 
	or an infinite increasing union of such.
	\item Over $\ft$ the frame relations have a particularly simple form:
	\begin{itemize}
		\item The ex relation is 
		$$E = AD + BC\,.$$ 
		\item The inner frame relation is \begin{equation}\label{eq:inner-frame-f2}
			AD=BC=0\,,\quad\textrm{ or }\quad A=B=C=D=1\,.
		\end{equation}
		\item The outer frame relation is
		\begin{equation}\label{eq:outer-frame-f2}
			E+F=G+H\,.
		\end{equation}
	\end{itemize}
	
\end{itemize}
\ignore{
As for number walls, the zero patterns in height walls are called \emph{windows}, and for an outer frame pattern as in Definition \ref{def:flat-frame-relations} \eqref{item:flat-frame-relations-2}, the \emph{size} of the window is 
 $d+k-1$ where $d,k$ are as in Definition \ref{def:flat-frame-relations} \eqref{item:flat-frame-relations-2}. \usnote{I adapted the definition of size to match the new $d,k$}For convenience, ex patterns will be referred to as frame relations of size $0$. This is in accordance with a similar terminology for number walls suggested in \cite[Section 5]{L}.
 }



\section{The tiling and its structure}\label{sec:tiling}
In this section the tilings $\sigma^\infty(\splus{0})$ and $\kappa_1\left(\sigma^\infty(\splus{0})\right)$ are analysed. This analysis is based on the symmetries in the definition of $\sigma$ and $\kappa$ and the extension of the $G$-action to large square patterns over $\Sigma$. The outcome of the analysis is a proof of Theorem \ref{thm:main2}.
\subsection{The structure of $\sig^\infty(\splus{0})$}
The following lemma and corollary explore further the $G$-action on large patterns.
\begin{lemma}
Let $l\ge 1$ and let
$\smallmat{A&B\\C&D}\in \Mat_{2^{l+1}}(\Sig)$. Then,
\begin{align}
\label{eq: rho on blocks}
\rho \mat{A&B\\C&D}& = \mat{\iota \rho C&\iota \rho A\\ \iota \rho D&\iota \rho B},\\
%
\label{eq: eta on blocks}
\eta \mat{A&B\\C&D} &= \mat{\iota \eta D&\iota \eta B\\ \iota \eta C&\iota\eta A},\\
%
\label{eq: iota on blocks}
\iota \mat{A&B\\C&D}& = \mat{\iota A&\iota B\\ \iota C&\iota D}.
\end{align}
\end{lemma}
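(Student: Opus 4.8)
The plan is to reduce the block-matrix identities for $\Mat_{2^{l+1}}(\Sig)$ to the already-established formulas for $2\times 2$ matrices, using the fact that the $G$-action on $\Mat_{2^{l+1}}(\Sig)$ was \emph{defined} as a twisted combination $gM = \tgeo_g\tinn_{\psi^{l+1}(g)}M$, together with the compatibility of the geometric action with the block decomposition. The key observation is that if we write $M\in\Mat_{2^{l+1}}(\Sig)$ in block form $M=\smallmat{A&B\\C&D}$ with $A,B,C,D\in\Mat_{2^l}(\Sig)$, then the geometric isometry $\tgeo_{\pi(g)}$, applied to $M$ as a square, permutes the four $2^l\times 2^l$ sub-squares exactly as $\tgeo_{\pi(g)}$ applied to a $2\times 2$ matrix permutes its four entries, while additionally applying $\tgeo_{\pi(g)}$ internally to each block. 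In symbols, at the level of geometric actions, $\tgeo_g\smallmat{A&B\\C&D} = \smallmat{\tgeo_g\,(\ast)&\cdots\\\cdots&\cdots}$ where the block positions are rearranged by the $2\times 2$ pattern and each surviving block gets hit by $\tgeo_g$ as well. This is the content I would state and prove as a preliminary sub-claim; it is elementary but deserves an explicit sentence because it is the geometric heart of the matter.

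First I would isolate and verify this geometric sub-claim for the three generators $\rho,\eta,\iota$ of $G$: for $\iota$, $\pi(\iota)$ is trivial so $\tgeo_\iota$ does nothing and blocks are untouched; for $\rho$, rotating the whole $2^{l+1}\times 2^{l+1}$ square by $90\degree$ clockwise sends the top-left block to the top-right position (as in the $2\times 2$ case), etc., and simultaneously rotates each block by $90\degree$; for $\eta$, reflection along the anti-diagonal swaps $A\leftrightarrow D$ at the block level and reflects each block. Then I would combine this with the entrywise action: since $\tinn_{\psi^{l+1}(g)}$ acts entrywise, it commutes with the block decomposition and simply applies $\psi^{l+1}(g)$ to every entry of every block. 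Putting the two together and using $\psi^{l+1}(g) = \psi(\psi^l(g))$, I would recognize that $\tgeo_g\tinn_{\psi(\psi^l(g))}$ acting on an individual block $X\in\Mat_{2^l}(\Sig)$ is exactly... not quite $g$ acting on $X$ (that would be $\tgeo_g\tinn_{\psi^l(g)}X$), so there is an extra twist by $\iota$ precisely when $\psi(\psi^l(g))\ne\psi^l(g)$, i.e. when $g\in\{\rho,\eta\}$ but not $g=\iota$. This is exactly why the formulas \eqref{eq: rho on blocks} and \eqref{eq: eta on blocks} carry the prefactor $\iota$ on each block while \eqref{eq: iota on blocks} does not — it mirrors the $l=1$ example already computed in the excerpt.

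Concretely, for $g=\rho$: the block at position $(1,1)$ of $\rho M$ comes from the block $C$ (bottom-left of $M$) by the $2\times2$ permutation, and it is hit by $\tgeo_\rho\tinn_{\psi^{l+1}(\rho)} = \tgeo_\rho\tinn_{\iota\psi^l(\rho)} = \tinn_\iota\,\tgeo_\rho\tinn_{\psi^l(\rho)} = \tinn_\iota\circ(\text{action of }\rho\text{ on }\Mat_{2^l})$, so it equals $\iota\rho C$ in the notation of the statement; the other three blocks are handled identically. The same bookkeeping with $\psi^{l+1}(\eta)=\iota\psi^l(\eta)$ gives \eqref{eq: eta on blocks}, and $\psi^{l+1}(\iota)=\iota=\psi^l(\iota)$ (since $\psi(\iota)=\iota$) gives \eqref{eq: iota on blocks} with no extra twist. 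I expect the main obstacle — really the only thing requiring care rather than mere bookkeeping — to be stating and justifying the block-level compatibility of $\tgeo$ cleanly, i.e. that reflecting/rotating a $2^{l+1}$-square and then splitting into $2^l$-blocks gives the same result as splitting first, permuting the blocks by the $2\times2$ isometry, and rotating/reflecting each block; once that is pinned down, the identities follow by substituting $\psi^{l+1}(g)=\psi(\psi^l(g))$ and reading off $\psi(\psi^l(\rho))=\iota\psi^l(\rho)$, $\psi(\psi^l(\eta))=\iota\psi^l(\eta)$, $\psi(\psi^l(\iota))=\psi^l(\iota)$.
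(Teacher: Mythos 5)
Your proposal is correct and follows essentially the same route as the paper: decompose the twisted action as $\tgeo_g\tinn_{\psi^{l+1}(g)}$, use the block-level compatibility of $\tgeo_g$ (which the paper also records, e.g.\ $\tgeo_\rho\smallmat{A&B\\C&D}=\smallmat{\tgeo_\rho C&\tgeo_\rho A\\\tgeo_\rho D&\tgeo_\rho B}$), and extract the extra $\iota$ from $\psi^{l+1}(g)=\iota\,\psi^{l}(g)$ for $g\in\{\rho,\eta\}$ versus $\psi^{l+1}(\iota)=\iota$. Your explicit isolation of the geometric sub-claim is a slightly more careful write-up of a step the paper treats as evident, but the argument is the same.
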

\begin{proof}
Start with \eqref{eq: iota on blocks}. 
Recall Definition~\ref{def: action on matrices}, that $\tgeo_\iota$ acts trivially and that $\psi^k(\iota)=\iota$ for any $k\in\bbz$. Therefore,
\begin{align*}
\iota \mat{A&B\\C&D} &= \tinn_\iota  \mat{A&B\\ C&D} = 
\mat{\tinn_\iota A&\tinn_\iota B\\ \tinn_\iota C&\tinn_\iota D} 
=\mat{\iota A&\iota B\\ \iota C&\iota D} 
\end{align*}
For \eqref{eq: rho on blocks}, note that on the geometric level
$$\tgeo_\rho\mat{A&B\\ C&D} = \mat{\tgeo_\rho C &\tgeo_\rho A \\ \tgeo_\rho D&\tgeo_\rho B}.$$
Also, since any $l\ge 1$ satisfies 
$$
\psi^{l+1}(\rho) = \iota\psi^l(\rho)\,,
$$
this implies that
\begin{align*}
\rho  \mat{A&B\\ C&D}
&= 
\tinn_{\psi^{l+1}(\rho)} \tgeo_\rho\mat{A& B\\ C&D} 
\\[1.5pt]
&=\tinn_\iota \tinn_{\psi^{l}(\rho)} \mat{\tgeo_\rho C & \tgeo_\rho A\\ \tgeo_\rho D&\tgeo_\rho B} 
\\[3pt]
&=\iota \mat{\tinn_{\psi^l(\rho)}\tgeo_\rho C &\tinn_{\psi^l(\rho)} \tgeo_\rho A\\ \tinn_{\psi^l(\rho)}\tgeo_\rho D&\tinn_{\psi^l(\rho)}\tgeo_\rho B}
\\[1.5pt] 
&=  \iota \mat{\rho C & \rho A 
\\ \rho D& \rho B}
\\
&=  \mat{\iota \rho C  &\iota\rho A\\ 
\iota\rho D&\iota\rho B}.
\end{align*}
A similar reasoning shows \eqref{eq: eta on blocks}.
\end{proof}
The following statement is the propagation of the equivariance in Proposition~\ref{prop: equivariance} from 
basic tiles to larger patterns:
\begin{corollary}\label{cor:key}
	For every $s\in\Sigma$, $g\in G$ and $l\geq0$, the $l$\textsuperscript{th} iteration of $\sig$ satisfies 
	\begin{equation*}
		\sigma^l(g s) = g\sigma^l\left(s\right).
	\end{equation*}
\end{corollary}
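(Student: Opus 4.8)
The plan is to prove Corollary~\ref{cor:key} by induction on $l$, using Proposition~\ref{prop: equivariance} as the base case and the block-action lemma (equations \eqref{eq: rho on blocks}--\eqref{eq: iota on blocks}) to carry out the inductive step. Since $G$ is generated by $\rho,\eta,\iota$, it suffices to establish the identity $\sigma^l(gs)=g\sigma^l(s)$ for $g$ ranging over these three generators; the general case then follows because if $\sigma^l(g_1 t)=g_1\sigma^l(t)$ and $\sigma^l(g_2 t)=g_2\sigma^l(t)$ hold for all $t\in\Sigma$, then $\sigma^l(g_1 g_2 s)=g_1\sigma^l(g_2 s)=g_1 g_2\sigma^l(s)$.

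For $l=0$ the statement is trivial, and for $l=1$ it is exactly Proposition~\ref{prop: equivariance}. For the inductive step, suppose $\sigma^{l}(gs)=g\sigma^{l}(s)$ holds for all $s\in\Sigma$ and all $g\in G$, and consider $\sigma^{l+1}(gs)=\sigma^{l}\!\bigl(\sigma(gs)\bigr)$. Here one must be careful about what $\sigma^{l}$ means applied to a $2\times 2$ matrix of tiles: $\sigma^{l+1}(s)$ is the $2^{l+1}\times 2^{l+1}$ matrix obtained by replacing each entry of $\sigma(s)=\smallmat{A&B\\C&D}$ (with $A,B,C,D\in\Sigma$) by its $\sigma^{l}$-image, i.e.
\begin{equation*}
\sigma^{l+1}(s)=\mat{\sigma^{l}(A)&\sigma^{l}(B)\\ \sigma^{l}(C)&\sigma^{l}(D)},
\end{equation*}
a block matrix in $\Mat_{2^{l+1}}(\Sigma)$. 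By Proposition~\ref{prop: equivariance}, $\sigma(gs)=g\sigma(s)$, and the right-hand side is computed via the twisted action on $\Mat_2(\Sigma)$ spelled out in the Example after Definition~\ref{def: action on matrices}. For $g=\iota$ this gives $\sigma(\iota s)=\smallmat{\iota A&\iota B\\ \iota C&\iota D}$, so $\sigma^{l+1}(\iota s)=\smallmat{\sigma^{l}(\iota A)&\sigma^{l}(\iota B)\\ \sigma^{l}(\iota C)&\sigma^{l}(\iota D)}$; applying the inductive hypothesis entrywise turns this into $\smallmat{\iota\sigma^{l}(A)&\iota\sigma^{l}(B)\\ \iota\sigma^{l}(C)&\iota\sigma^{l}(D)}$, which by \eqref{eq: iota on blocks} equals $\iota\sigma^{l+1}(s)$. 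The cases $g=\rho$ and $g=\eta$ are handled the same way: Proposition~\ref{prop: equivariance} together with the $2\times 2$ twisted-action formulas gives $\sigma(\rho s)=\smallmat{\iota\rho C&\iota\rho A\\ \iota\rho D&\iota\rho B}$ (and analogously for $\eta$), so $\sigma^{l+1}(\rho s)=\smallmat{\sigma^{l}(\iota\rho C)&\sigma^{l}(\iota\rho A)\\ \sigma^{l}(\iota\rho D)&\sigma^{l}(\iota\rho B)}$; the inductive hypothesis rewrites each block as $\iota\rho\,\sigma^{l}(\cdot)$, and then \eqref{eq: rho on blocks} (applied to the block matrix $\sigma^{l+1}(s)\in\Mat_{2^{l+1}}(\Sigma)$, which requires $l\geq 1$, the case $l=0$ being Proposition~\ref{prop: equivariance} itself) identifies the result with $\rho\,\sigma^{l+1}(s)$. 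The $\eta$ case uses \eqref{eq: eta on blocks} identically.

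The only genuine subtlety — and the step I expect to require the most care — is the bookkeeping of the twist $\psi$: one must check that the power of $\psi$ appearing in the entrywise part of the action on $\Mat_{2^{l+1}}(\Sigma)$ is consistent with what the induction produces. This is precisely what the block lemma encodes via the identity $\psi^{l+1}(g)=\iota\psi^{l}(g)$ for $g\in\{\rho,\eta\}$ and $\psi^{l+1}(\iota)=\iota$, so no new computation is needed beyond invoking that lemma; still, one should state explicitly that the base of the induction for the generator argument is $l\leq 1$ so that the block lemma (which assumes $l\geq 1$) applies at every step where it is used. Once the generator cases are settled, closing the argument for arbitrary $g\in G$ is the routine factor-through-a-word computation indicated above, so the proof is complete.
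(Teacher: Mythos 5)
Your proof is correct and follows essentially the same route as the paper: induction on $l$ with Proposition~\ref{prop: equivariance} as the base, reduction to the generators $\iota,\rho,\eta$ via the group-action axioms, and the block-action formulas \eqref{eq: rho on blocks}--\eqref{eq: iota on blocks} to reassemble the four $2^l$-blocks in the inductive step. The only cosmetic difference is that the paper phrases its induction hypothesis as ``$\sigma$ commutes with the $G$-action on $\Mat_{2^l}(\Sigma)$'' rather than ``$\sigma^l$ commutes with the $G$-action on $\Sigma$'', but the computations are identical.
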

\begin{proof}
By induction on $l$. The base of the induction is Proposition \ref{prop: equivariance}. In the inductive step it is assumed that for some $l\geq0$, every $g\in G$ and $M\in \Mat_{2^l}(\Sig)$ satisfy
\begin{equation}\label{eq: equivariance2}
\sig(g M) = g\sig(M)\,.
\end{equation}
Due to the axioms of a group action, in order to complete the inductive step it is enough to check \eqref{eq: equivariance2} for $g \in \{\iota, \rho, \eta\}$ and every $M\in \Mat_{2^{l+1}}(\Sig)$. This identity will then propagate to any $g\in G$. Fix
$M = \smallmat{A&B\\C&D}\in \Mat_{2^{l+1}}(\Sig)$, where $A,B,C,D\in \Mat_{2^l}(\Sig)$. 
For $g=\iota$, using \eqref{eq: iota on blocks} and the inductive hypothesis gives
$$\iota \sig\mat{A&B\\ C&D} 
= 
\mat{\iota \sig( A)&\iota \sig(B)\\ \iota \sig( C)&\iota\sig( D)} 
=
\mat{\sig(\iota A)&\sig(\iota B)\\ \sig(\iota C)&\sig(\iota D)}
=
\sig\iota\mat{A&B\\ C&D}.
$$ 
For $g=\rho$, using \eqref{eq: rho on blocks} and the inductive hypothesis gives
$$\rho \sig\mat{A&B\\ C&D} 
= 
\mat{\iota \rho \sig( C)&\iota\rho \sig(A)\\ \iota\rho \sig( D)&\iota \rho\sig( B)} 
=
\mat{\sig(\iota \rho C)&\sig(\iota \rho C)\\ \sig(\iota \rho D)&\sig(\iota\rho B)}
=
\sig\rho\mat{A&B\\ C&D}.$$
For $g=\eta$ one uses \eqref{eq: eta on blocks} and the inductive hypothesis in a similar fashion.
\end{proof}

\begin{remark}
Corollary \ref{cor:key} with $s=\splus{0}$ and $g=\iota$ is a two dimensional manifestation of a standard observation about the Thue-Morse sequence, which is that $\mu^l(\tileb)$ equals $\mu^l(\tilea)$ in reversed order whenever $l$ is odd, where $\mu$ is defined in \eqref{eq:tm-substitution}.
\end{remark}
\begin{example}\label{example: first iterations}
This is a good place to work out a few iterations of $\sig$ on the tile $\splus{0}$ and get a feeling for the 
strength of the structural properties of the emerging substitution tiling stemming out of the group action.
Here are the first three iterations of $\sig$ on $\splus{0}$:
$$
{\small 
\splus{0}\mapsto 
\begin{array}{cc}
\splus{0} & \tilec{0}\\
\tiler{a} & \sminus{0}
\end{array}
\mapsto
\begin{array}{cccc}
\splus{0} & \tilec{0} & \tilo& \tilo\\
\tiler{a} & \sminus{0} & \tilec{0} & \tilo\\
\sminus{1}& \splus{2} & \sminus{0} & \tilec{0} \\
\splus{0}&\sminus{3} &\tiler{b} & \splus{0}
\end{array}
\mapsto
\begin{array}{cccccccc}
\splus{0} &\tilec{0} &\tilo     &\tilo     &\tilo     &\tilo     & \tilo   &\tilo\\
\tiler{a} &\sminus{0}&\tilec{0} &\tilo     &\tilo     &\tilo     & \tilo   &\tilo\\
\sminus{1}&\splus{2} &\sminus{0}&\tilec{0} &\tilo     &\tilo     &\tilo    &\tilo\\
\splus{0} &\sminus{3}&\tiler{b} &\splus{0} &\tilec{0} &\tilo     &\tilo    &\tilo\\
\tiler{a} &\splus{1}&\sminus{2} &\tiler{a} &\sminus{0}&\tilec{0} &\tilo    &\tilo\\
\sminus{1}&\tilec{1} &\tilec{2} &\splus{2}&\tiler{b} &\splus{0} &\tilec{0}&\tilo\\
\splus{0} &\tilec{0} &\tilec{3} &\sminus{3}&\splus{1} &\sminus{2}&\splus{0}&\tilec{0}\\
\tiler{a} &\sminus{0}&\splus{3} &\tiler{a} &\sminus{0}&\splus{3} &\tiler{a}&\sminus{0}
\end{array}}
$$
We wish to reflect about this procedure. The naive way of looking at each step is that we apply $\sig$ to the 
tiles of the arrays. But, we may also notice that for any $l\ge 0$, 
if we denote $A = \sig^l(\splus{0})$, then 
\begin{align}
\nonumber
\sig^{l+2}(\splus{0}) 
&= 
\sig^{l+1}\mat{\splus{0}&\tilec{0}\\ \tiler{a} &\iota \splus{0}} 
\\
\nonumber
&= 
\mat{
\sig(A)&\sig^{l+1}(\tilec{0})\rule{0pt}{2.5ex} \\
{\sig^l\mat{\iota\rho \splus{0}&\rho^2\splus{0} \\ \splus{0}&\iota\rho^3\splus{0}}}
&\iota \sig(A)}
\\[1.5pt]
\label{eq:magic}
&=
\mat{
\sig(A) & \sig^{l+1}(\tilec{0})\\
{\begin{array}{cc}
\iota \rho A&\rho^2A\\
A&\iota \rho^3A
\end{array}
}
& \iota\sig(A)
}.
\end{align}
Note what happens in the bottom left quarter of \eqref{eq:magic}: it is built from four copies of $A$ which differ from one another by the action of $\iota\rho$. This is nicely demonstrated in the above iteration for $l=2$. It is now very easy to apply $\sig$ again and calculate
$\sig^4(\splus{0})$ from $\sig^2(\splus{0})$ and $\sig^3(\splus{0})$: 
$$
\begin{array}{cccccccccccccccc}
	\splus{0} &\tilec{0} &\tilo     &\tilo     &\tilo     &\tilo     & \tilo   &\tilo & \tilo &\tilo &\tilo     &\tilo     &\tilo     &\tilo     & \tilo   &\tilo\\
	\tiler{a} &\sminus{0}&\tilec{0} &\tilo     &\tilo     &\tilo     & \tilo   &\tilo& \tilo &\tilo &\tilo     &\tilo     &\tilo     &\tilo     & \tilo   &\tilo\\
	\sminus{1}&\splus{2} &\sminus{0}&\tilec{0} &\tilo     &\tilo     &\tilo    &\tilo& \tilo &\tilo &\tilo     &\tilo     &\tilo     &\tilo     & \tilo   &\tilo\\
	\splus{0} &\sminus{3}&\tiler{b} &\splus{0} &\tilec{0} &\tilo     &\tilo    &\tilo& \tilo &\tilo &\tilo     &\tilo     &\tilo     &\tilo     & \tilo   &\tilo\\
	\tiler{a} &\splus{1}&\sminus{2} &\tiler{a} &\sminus{0}&\tilec{0} &\tilo    &\tilo& \tilo &\tilo &\tilo     &\tilo     &\tilo     &\tilo     & \tilo   &\tilo\\
	\sminus{1}&\tilec{1} &\tilec{2} &\splus{2}&\tiler{b} &\splus{0} &\tilec{0}&\tilo& \tilo &\tilo &\tilo     &\tilo     &\tilo     &\tilo     & \tilo   &\tilo\\
	\splus{0} &\tilec{0} &\tilec{3} &\sminus{3}&\splus{1} &\sminus{2}&\splus{0}&\tilec{0}& \tilo &\tilo &\tilo     &\tilo     &\tilo     &\tilo     & \tilo   &\tilo\\
	\tiler{a} &\sminus{0}&\splus{3} &\tiler{a} &\sminus{0}&\splus{3} &\tiler{a}&\sminus{0}& \tilec{0} &\tilo &\tilo     &\tilo     &\tilo     &\tilo     & \tilo   &\tilo
	\\
	\sminus{1}&\splus{2}&\tiler{b}&\sminus{1}&\splus{2}&\tiler{b}&\sminus{1}&\splus{2}&\sminus{0} &\tilec{0} &\tilo     &\tilo     &\tilo     &\tilo     & \tilo   &\tilo
	\\
	\splus{0}&\sminus{3}&\splus{1}&\tilec{1}&\tilec{2}&\sminus{2}&\splus{0}&\sminus{3}&\tiler{b} &\splus{0}&\tilec{0} &\tilo     &\tilo     &\tilo     & \tilo   &\tilo
	\\
	\tiler{a}&\splus{1}&\tilec{1} &\tilo&\tilo&\tilec{2}&\sminus{2}&\tiler{a}&\splus{1}&\sminus{2} &\splus{0}&\tilec{0} &\tilo     &\tilo     &\tilo    &\tilo
	\\
	\sminus{1}&\tilec{1}&\tilo&\tilo&\tilo&\tilo&\tilec{2}&\splus{2}&\sminus{0} &\splus{3}&\tiler{a} &\sminus{0} &\tilec{0} &\tilo     &\tilo    &\tilo
	\\
	\splus{0}&\tilec{0}&\tilo&\tilo&\tilo&\tilo&\tilec{3}&\sminus{3}&\tiler{b} &\sminus{1}&\splus{2} &\tiler{b} &\splus{0}&\tilec{0} &\tilo    &\tilo
	\\
	\tiler{a}&\sminus{0}&\tilec{0} &\tilo&\tilo&\tilec{3}&\splus{3}&\tiler{a}&\splus{1}&\tilec{1} &\tilec{2} &\sminus{2}&\tiler{a} &\sminus{0} &\tilec{0}&\tilo
	\\
	\sminus{1}&\splus{2}&\sminus{0}&\tilec{0}&\tilec{3}&\splus{3}&\sminus{1}&\splus{2}&\sminus{0} &\tilec{0} &\tilec{3} &\splus{3}&\sminus{1} &\splus{2}&\sminus{0}&\tilec{0}
	\\
	\splus{0}&\sminus{3}&\tiler{b}&\splus{0}&\sminus{3}&\tiler{b}&\splus{0}&\sminus{3}&\tiler{b} &\splus{0}&\sminus{3} &\tiler{b} &\splus{0}&\sminus{3}&\tiler{b}&\splus{0}
\end{array}.
$$
Note the perfect diamond shape of the 
tile $\tilo$ that appears in the bottom left corner, contoured by a fence of tiles $\tilec{i}$ where $i$ changes from one edge of the fence to the other due to the action of $\rho$. This phenomenon repeats itself in further iterations of $\sigma$. See Figure~\ref{fig:thm_infls} for $\sigma^5(\splus{0})$.
\end{example}
\begin{figure}[ht]
	\centering
	\includegraphics[width=1\linewidth]{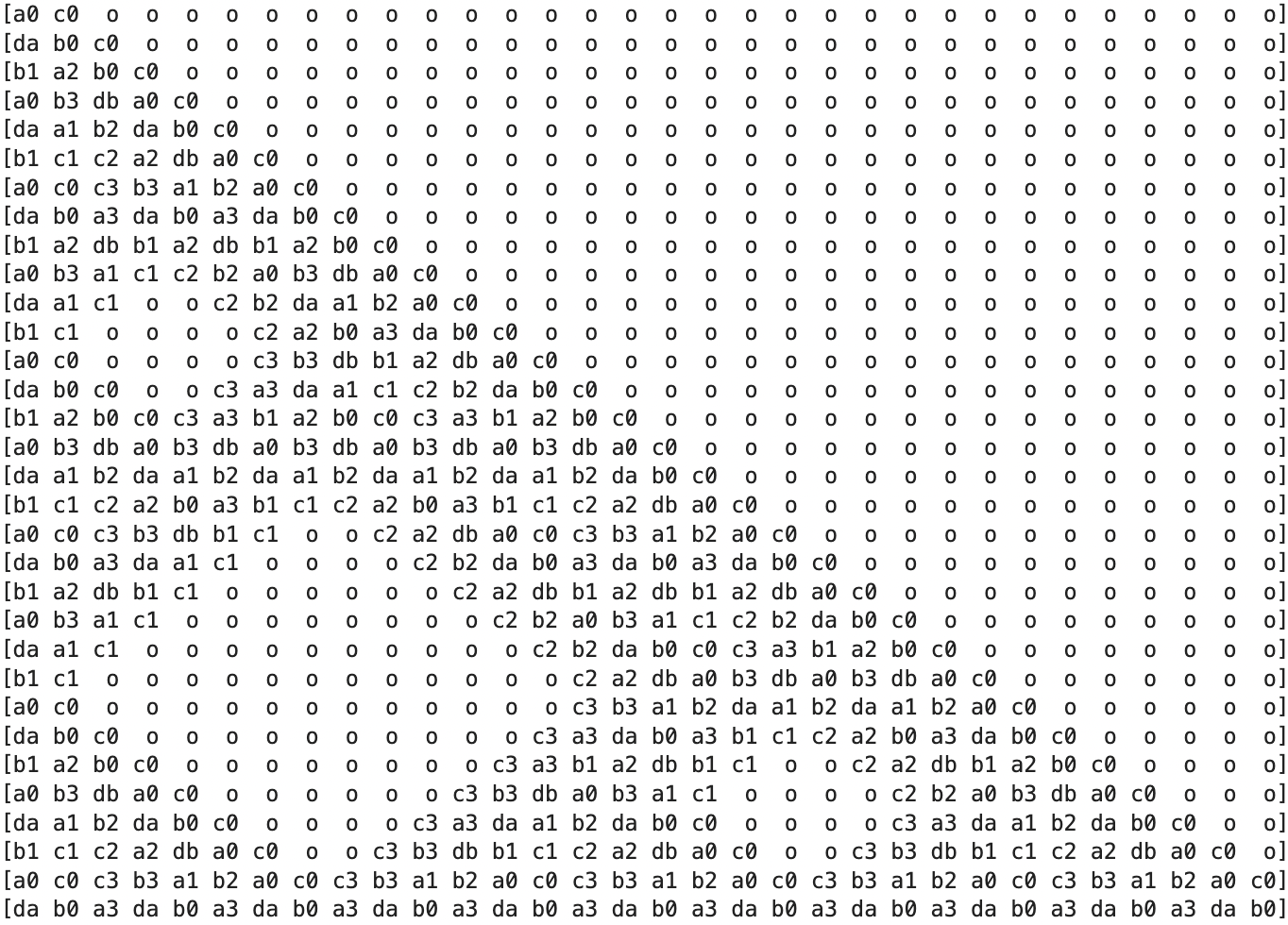}
	\caption{SageMath output  of the pattern $\sigma^{5}(\splus{0})$, where the tile symbols have been altered in a suggestive manner.}
	\label{fig:thm_infls}
\end{figure}
\ignore{
By Definition~\ref{def: iota} and Proposition~\ref{prop: equivariance} (see also \eqref{eq: acting with eta on 2 matrix})
$$\sigma(\sminus{0}) = \sigma(\iota \splus{0}) = \iota\sigma(\splus{0}) = \iota\left(\begin{array}{cc}
	\splus{0} &\tilec{0} \\
	\tiler{a} &\sminus{0}
\end{array}\right) = \begin{array}{cc}
\sminus{0} & \tilec{0} \\
\tiler{b} & \splus{0}
\end{array}$$
}
The following four lemmas reveal bits of information regarding the structure of the tiling $\sig^\infty(\splus{0})$. We will use them in order to obtain relevant information about its coding $\ka_1\left(\sig^\infty(\splus{0})\right)$ in the next subsection.

\begin{lemma}\label{lem:sigma0_diagonal}
For any $l\geq0$, the main diagonal of $\sigma^l(\splus{0})$ is the Thue-Morse sequence on the symbols
$\splus{0}$ and $\sminus{0}$ of length $2^l$. The diagonal which lies just above it is a sequence of $\tilec{0}$'s, and apart from that every entry above the main diagonal is $\tilo$. A similar statement
holds for $\sig^\infty(\splus{0})$. Analogous statements hold for $\sigma^l(\sminus{0})$ and $\sig^\infty(\sminus{0})$.
\end{lemma}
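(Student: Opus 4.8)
The plan is to prove the statement for $\sig^l(\splus{0})$ by induction on $l$, using the block recursion~\eqref{eq:magic} (equivalently the defining formula $\sig(\splus{0})=\smallmat{\splus{0}&\tilec{0}\\ \tiler{a}&\sminus{0}}$ together with Corollary~\ref{cor:key}), and then passing to the limit. The base case $l=0$ is immediate since $\sig^0(\splus{0})=\splus{0}$, which is a length-$1$ Thue--Morse string on $\{\splus{0},\sminus{0}\}$ with nothing above the diagonal. For the inductive step, write $A=\sig^l(\splus{0})$, a $2^l\times 2^l$ matrix, and recall that by Definitions~\ref{def: the substitution sigma part 1} and~\ref{def: the substitution  sigma part 2} one has $\sig^{l+1}(\splus{0})=\smallmat{\sig^l(\splus{0})&\sig^l(\tilec{0})\\ \sig^l(\tiler{a})&\sig^l(\sminus{0})}$, so that its main diagonal is the concatenation of the main diagonals of the two blocks $\sig^l(\splus{0})$ and $\sig^l(\sminus{0})=\sig^l(\iota\splus{0})=\iota\,\sig^l(\splus{0})$ sitting on the block-diagonal.

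First I would handle the diagonal. By the inductive hypothesis the diagonal of $A=\sig^l(\splus{0})$ is the Thue--Morse word of length $2^l$ on $\{\splus{0},\sminus{0}\}$; since $\iota$ acts on this alphabet exactly as the coordinate map $\mu$ of~\eqref{eq:tm-substitution} (it swaps $\splus{0}\leftrightarrow\sminus{0}$, just as $\mu$ swaps $\tilea\leftrightarrow\tileb$), the diagonal of $\iota A$ is the bitwise complement of that word. Concatenating a length-$2^l$ Thue--Morse prefix with its complement yields the length-$2^{l+1}$ Thue--Morse prefix — this is the standard two-line description of the Thue--Morse substitution, and it matches the remark following Corollary~\ref{cor:key}. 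Hence the main diagonal of $\sig^{l+1}(\splus{0})$ is the Thue--Morse word of length $2^{l+1}$, as claimed.

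Next I would handle the strictly-upper-triangular part. The entries strictly above the block diagonal all lie in the top-right block $\sig^l(\tilec{0})$; the entries strictly above the diagonal but inside the two diagonal blocks $\sig^l(\splus{0})$ and $\sig^l(\sminus{0})=\iota\sig^l(\splus{0})$ are, by the inductive hypothesis and the fact that $\iota$ fixes $\tilo$ and $\tilec{0}$, equal to $\tilo$ except for a superdiagonal of $\tilec{0}$'s within each block. It remains to analyse $B:=\sig^l(\tilec{0})$. I claim $B$ is entirely $\tilo$ except for its bottom-left entry, which is $\tilec{0}$, and I would prove this by a separate (nested) induction on $l$: for $l=0$ it is clear, and $\sig^{l+1}(\tilec{0})=\smallmat{\sig^l(\tilo)&\sig^l(\tilo)\\ \sig^l(\tilec{0})&\sig^l(\tilo)}$ using $\sig(\tilec{0})=\smallmat{\tilo&\tilo\\ \tilec{0}&\tilo}$ and $\sig(\tilo)=\smallmat{\tilo&\tilo\\ \tilo&\tilo}$, together with the trivial fact that $\sig^l(\tilo)$ is the all-$\tilo$ matrix; the inductive hypothesis on the bottom-left block then gives the claim. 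Putting this together: in $\sig^{l+1}(\splus{0})$, the only nonzero contribution to the top-right block $B$ is a single $\tilec{0}$ at the position just above the main diagonal (the bottom-left corner of $B$), which fits precisely between the superdiagonal $\tilec{0}$'s of the two diagonal blocks; every other super-diagonal entry is $\tilo$. This completes the inductive step.

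Finally, the statement for $\sig^\infty(\splus{0})$ follows by taking $l\to\infty$: since $\sig(\splus{0})$ has $\splus{0}$ in its top-left entry, the tilings $\sig^l(\splus{0})$ stabilise on every fixed finite region, so the main diagonal of $\sig^\infty(\splus{0})$ is the infinite Thue--Morse sequence on $\{\splus{0},\sminus{0}\}$, the superdiagonal is all $\tilec{0}$, and everything further above is $\tilo$. The analogous statements for $\sig^l(\sminus{0})$ and $\sig^\infty(\sminus{0})$ follow by applying $\iota$ and Corollary~\ref{cor:key}, noting again that $\iota$ complements the Thue--Morse word on $\{\splus{0},\sminus{0}\}$ and fixes $\tilo$ and $\tilec{0}$. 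The only mildly delicate point — really the crux — is the bookkeeping at the seam where the three pieces (the two diagonal blocks and the top-right block $B$) meet, i.e.\ checking that the superdiagonal of $\tilec{0}$'s continues across the block boundary without a gap or an extra entry; this is exactly what the computation $\sig^l(\tilec{0})=\tilec{0}$-in-the-bottom-left-corner provides, so no real obstacle remains once that auxiliary claim is in hand.
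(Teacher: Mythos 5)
Your proof is correct and is essentially the paper's argument, just written out in full: the paper's own proof is a terse version of the same induction, pointing to the block forms of $\sigma(\splus{0})$, $\sigma(\sminus{0})$, $\sigma(\tilec{0})$ and $\sigma(\tilo)$ and to Example~\ref{example: first iterations}, with the seam bookkeeping (the single $\tilec{0}$ in the bottom-left corner of $\sigma^l(\tilec{0})$ landing on the superdiagonal) left implicit. The only nitpick is the phrase ``$\iota$ acts \ldots exactly as the coordinate map $\mu$'' --- what you mean (and correctly use) is that $\iota$ acts as the \emph{complementation} $\splus{0}\leftrightarrow\sminus{0}$, so that $\mu^{l+1}(\tilea)=\mu^l(\tilea)\,\mu^l(\tileb)$ gives the Thue--Morse doubling step.
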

\begin{proof}
The claims about the main diagonals follow by inspecting the main diagonals of 
\begin{align*}
	\sigma(\splus{0}) &= \mat{\splus{0}& \tilec{0} \\ \tiler{a}&\sminus{0}}, &&\textrm{ and } &\sigma(\sminus{0}) &= \mat{\sminus{0} &\tilec{0} \\ \tiler{b} & \splus{0}}.
\end{align*} 
The claims about the diagonals above them follow by also inspecting 
\begin{align*}
	\sigma(\tilec{0}) &= \mat{\tilo &\tilo\\ \tilec{0} &\tilo}, &&\textrm{ and } &\sigma(\tilo) &= \mat{\tilo&\tilo\\ \tilo&\tilo}.
\end{align*}
The reader should consult Example~\ref{example: first iterations}.
\end{proof}
\ignore{
	
\usnote{Commented in the tex here is a lemma explaining in detail the structure of $\sig^l\left(\tiler{a}\right)$, 
leave this side note until we are sure we don't need such a lemma}

\begin{lemma}\label{lem: r0structure}
For any $l\ge2$ the pattern $\sig^l\left(\tiler{a}\right)$ has the following form
\begin{equation}\label{eq: the shape of r0}
\boxed{
\begin{array}{cccccccccccc}
&&&&&\sminus{1}&\rho^2s&&&&&\\
&&&&&\tilec{1}&\tilec{2}&&&&&\\
&&&&\tilec{1}&\tilo&\tilo&\tilec{2}&&&& \\
&&&\iddots&\iddots&&&\ddots&\ddots&&&\\
&&\tilec{1}&\iddots&&\vdots&\vdots&&\ddots&\tilec{2}&&\\
\iota\rho s&\tilec{1}&\tilo&&\dots&\tilo&\tilo&\dots&&\tilo&\tilec{2}&\splus{2}\\
\splus{0}&\tilec{0}&\tilo&&\dots&\tilo&\tilo&\dots&&\tilo&\tilec{3}&\iota\rho^3s\\
&&\tilec{0}&\ddots&&\vdots&\vdots&&\iddots&\tilec{3}&&\\
&&&\ddots&&&&&\iddots&&&\\
&&&&\tilec{0}&\tilo&\tilo&\tilec{3}&&&&\\
&&&&&\tilec{0}&\tilec{3}&&&&&\\
         &&&&&s&\sminus{3}&&&&&
\end{array}
}
\end{equation}
where $s$ is either $\splus{0}$ or $\sminus{0}$.
\end{lemma}
\begin{proof}
Equation~\eqref{eq:magic} implies that if we denote $A = \sig^{l-1}(\splus{0})$, then 
$$\sig^l\left(\tiler{a}\right) = \begin{array}{c c}
\iota \rho A &\rho^2A\\ A &\iota \rho^3 A
\end{array}$$
the claim now follows from Lemma~\ref{lem:sigma0_diagonal} which describes the part of $\sig^l(\splus{0})$ from the main diagonal and up. Note that in each application of $\rho$ on $A$, the pattern rotates and $\rho$ or $\rho\iota$ act on 
its entries. The action of $\iota$ has no effect on the tiles $\tilec{i}$ and $\tilo$ but $\rho$ rotates $\tilec{i}$ to 
$\tilec{i+1}$. See Figure~\ref{fig:thm_infls}.
\end{proof}
}

We will use the following terminology: 

\begin{definition}
	Let $\wh{\Sig}$ is a finite set of tiles and $P:\{1,\ldots,m\}\to\wh{\Sig}$ be a finite sequence of tiles from $\wh{\Sig}$.
	\begin{enumerate}
		\item The sequence $P^k$ is the concatenation of $P$ with itself $k$ times, i.e., every $1\leq i \leq m$ and $0\leq k'<k$ satisfies
		$$
		P^k(i+k'm) = P(i)\,.
		$$ 
		\item A finite sequence $P'$ of tiles from $\wh{\Sig}$ is \emph{periodic with period $P$} if there exists $k\ge 1$ such that $P'$ starts with $P^k$ and $P^{k+1}$ starts with $P'$.
	\end{enumerate}
\end{definition}

The following lemma allows us to identify when a
$1$-dimensional substitution tiling is periodic with a given period. Its proof is left to the reader.

\begin{lemma}\label{lem: periodic}
Let $\wh{\Sig}$ be a finite set and $\wh{\sig}:\wh{\Sig}\to\wh{\Sig}^k$ be a $1$-dimensional $k$-substitution on $\wh{\Sig}$, where $k\ge2$.  
If $s\in \wh{\Sig}$ and $l_0\ge 0$ satisfy that 
both $\wh{\sig}^{l_0}(s)$ and 
$\wh{\sig}^{l_0+1}(s)$ are periodic with period $P$, then 
$\wh{\sig}^l(s)$ is periodic with period $P$ for any $l\ge l_0$.
\end{lemma}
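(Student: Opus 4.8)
The plan is to run an induction on $l \ge l_0$, where the base case $l = l_0$ is given by hypothesis and the inductive step shows that periodicity with period $P$ is preserved under one application of $\wh{\sig}$, provided one already knows periodicity with period $P$ at two consecutive levels. The key structural observation is that $\wh{\sig}$ being a $k$-substitution means it extends to a substitution on finite and infinite words by concatenation: if $w = w_1 w_2 \cdots w_m$ is a word in $\wh{\Sig}$, then $\wh{\sig}(w) = \wh{\sig}(w_1)\wh{\sig}(w_2)\cdots\wh{\sig}(w_m)$, and more generally $\wh{\sig}^{l+1}(w) = \wh{\sig}^l(\wh{\sig}(w))$. In particular, if $w$ starts with $v$, then $\wh{\sig}(w)$ starts with $\wh{\sig}(v)$, so $\wh{\sig}$ is monotone with respect to the prefix order; this is the one small fact that drives everything.

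First I would unwind what ``periodic with period $P$'' means: $\wh{\sig}^l(s)$ is periodic with period $P$ iff there is some $a = a(l) \ge 1$ such that $P^a$ is a prefix of $\wh{\sig}^l(s)$ and $\wh{\sig}^l(s)$ is a prefix of $P^{a+1}$; equivalently, $\wh{\sig}^l(s)$ agrees with the periodic infinite word $P^\infty$ on all of its length, and this length lies in the half-open interval $[\,a|P|,\,(a+1)|P|\,)$. Now suppose this holds for levels $l$ and $l+1$, with parameters $a$ and $b$ respectively; I want to conclude it for level $l+2$. Since $\wh{\sig}^{l+1}(s)$ is a prefix of $P^{b+1}$, applying $\wh{\sig}$ gives that $\wh{\sig}^{l+2}(s)$ is a prefix of $\wh{\sig}(P^{b+1}) = \wh{\sig}(P)^{b+1}$. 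Similarly, $P^b$ being a prefix of $\wh{\sig}^{l+1}(s)$ gives that $\wh{\sig}(P)^b$ is a prefix of $\wh{\sig}^{l+2}(s)$. The crucial extra input is that $\wh{\sig}(P)$ is itself periodic with period $P$: indeed $\wh{\sig}(P)$ has length $k|P|$, and I need to show it equals the length-$k|P|$ prefix of $P^\infty$. But $\wh{\sig}^{l+1}(s)$ is periodic with period $P$ and has length $\ge k|P|$ as soon as $l$ is large enough — and if not, one can pad by noting that $\wh{\sig}^{l+1}(s)$ and $\wh{\sig}^{l+2}(s)$ being periodic with period $P$ forces $\wh{\sig}(\wh{\sig}^{l}(s))$ and $\wh{\sig}^l(s)$ to be compatible; more directly, I would argue that $\wh{\sig}$ sends the one-sided fixed-point-like word $P^\infty$-prefixes to $P^\infty$-prefixes, because $\wh{\sig}(P^n)$ is a prefix of $\wh{\sig}^{l+1}(s)$ for suitable $n$ and the latter is a prefix of $P^\infty$, hence $\wh{\sig}(P)$ is a prefix of $P^\infty$, i.e. periodic with period $P$. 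Once $\wh{\sig}(P)$ is known to be periodic with period $P$, so is any power $\wh{\sig}(P)^j$, and therefore $\wh{\sig}^{l+2}(s)$, being squeezed between the two periodic words $\wh{\sig}(P)^b$ (as a prefix) and $\wh{\sig}(P)^{b+1}$ (as a superword), is periodic with period $P$, with a possibly different count parameter.

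The main obstacle I anticipate is purely bookkeeping: making sure the chain of prefix relations closes up without an off-by-one gap, i.e. that the parameter $a(l+2)$ can genuinely be chosen $\ge 1$ and that one never needs $\wh{\sig}^{l+1}(s)$ to be \emph{longer} than it actually is in order to certify that $\wh{\sig}(P)$ is a prefix of $P^\infty$. This is handled by the hypothesis being at \emph{two} consecutive levels rather than one: from level $l$ one extracts that $\wh{\sig}(P)$ is periodic with period $P$ (using that $\wh{\sig}^{l}(s)$ contains at least one full copy of $P$ as a prefix, which after applying $\wh{\sig}$ yields a full copy of $\wh{\sig}(P)$ inside $\wh{\sig}^{l+1}(s)$, itself a prefix of $P^\infty$), and from level $l+1$ one gets the two-sided squeeze for level $l+2$. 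An induction from $l_0$ then propagates the conclusion to all $l \ge l_0$. Since the paper says ``its proof is left to the reader,'' I would present only this skeleton — the prefix-monotonicity of $\wh{\sig}$, the derived fact that $\wh{\sig}(P)$ is periodic with period $P$, and the squeeze — and leave the index arithmetic to the reader, as the paper does.
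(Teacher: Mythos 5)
Your proposal is correct, and since the paper explicitly leaves this proof to the reader there is no competing argument to compare it with; what you write is exactly the intended reasoning. The one clean way to package it, which you essentially reach, is to extract from the two hypotheses the single identity $\wh{\sig}(P)=P^k$ (using that $P$ is a prefix of $\wh{\sig}^{l_0}(s)$, hence $\wh{\sig}(P)$ is a prefix of $\wh{\sig}^{l_0+1}(s)$, which is a prefix of $P^{\infty}$ and has length exactly $k|P|$, an exact multiple of $|P|$), after which a one--level induction suffices: if $\wh{\sig}^{l}(s)$ starts with $P^{a}$ and is a prefix of $P^{a+1}$, then $\wh{\sig}^{l+1}(s)$ starts with $P^{ak}$ and is a prefix of $P^{(a+1)k}$, hence of $P^{\infty}$, and is therefore periodic with period $P$.
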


\begin{lemma}\label{lem:sigmal0}
	Let $l\geq0$. 
	\begin{enumerate}
		\item The first column of $\sigma^l(\splus{0})$ is periodic with period
	\begin{align*}
		\begin{array}{c} \splus{0}\\ \tiler{a} \\ \sminus{1}\end{array}.
	\end{align*}
	\item The last row of $\sigma^l(\splus{0})$ is periodic with period 
	\begin{align*}
		\begin{array}{ccc} 
			\splus{0} & \sminus{3} & \tiler{b}    
		\end{array},\quad &\textrm{for $l$ even}\,,
		\\
		\begin{array}{ccc} 
			\tiler{a} & \sminus{0} & \splus{3}
		\end{array},\quad &\textrm{for $l$ odd}\,.
	\end{align*}
	\item The first column of $\sigma^l(\sminus{0})$ is periodic with period
	\begin{align*}
		\begin{array}{c} \sminus{0} \\ \tiler{b} \\ \splus{1} \end{array}.
	\end{align*}
	\item The last row of $\sigma^l(\sminus{0})$ is periodic with period
	\begin{align*}
		\begin{array}{ccc} 
			\sminus{0} & \splus{3} & \tiler{a}
		\end{array},\quad &\textrm{for $l$ even}\,,
		\\
		\begin{array}{ccc}
			\tiler{b} & \splus{0} & \sminus{3}
		\end{array},\quad &\textrm{for $l$ odd}\,.
	\end{align*}
\end{enumerate}
\end{lemma}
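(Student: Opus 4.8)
The plan is to reduce each of the four statements to the behaviour of a one‑dimensional $2$‑substitution extracted from $\sigma$. The starting observation is that $\sigma^{l+1}(\splus{0})$ is obtained from $\sigma^{l}(\splus{0})$ by replacing every entry $s$ with the $2\times 2$ block $\sigma(s)$ (the iteration itself involves no twist, as Example~\ref{example: first iterations} illustrates). Hence the leftmost column of $\sigma^{l+1}(\splus{0})$ is the concatenation, top to bottom, of the leftmost columns of $\sigma(s)$ as $s$ runs over the leftmost column of $\sigma^{l}(\splus{0})$. Writing $\sigma_{\mathrm c}\colon\Sig\to\Sig^{2}$ for the map sending $s$ to the leftmost column of $\sigma(s)$, and $\sigma_{\mathrm r}\colon\Sig\to\Sig^{2}$ for the map sending $s$ to the last row of $\sigma(s)$, this means the first column of $\sigma^{l}(\splus{0})$ equals $\sigma_{\mathrm c}^{\,l}(\splus{0})$ and the last row of $\sigma^{l}(\splus{0})$ equals $\sigma_{\mathrm r}^{\,l}(\splus{0})$, where $\sigma_{\mathrm c},\sigma_{\mathrm r}$ are regarded as $1$‑dimensional $2$‑substitutions. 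Parts (3) and (4) follow at once from (1) and (2): by Corollary~\ref{cor:key}, $\sigma^{l}(\sminus{0})=\sigma^{l}(\iota\splus{0})=\iota\,\sigma^{l}(\splus{0})$, and since $\iota$ acts on matrices entrywise (because $\tgeo_\iota$ is trivial and $\psi^{k}(\iota)=\iota$), the first column and last row of $\sigma^{l}(\sminus{0})$ are those of $\sigma^{l}(\splus{0})$ with $\iota$ applied to each tile; this turns $\splus{0}\tiler{a}\sminus{1}$ into $\sminus{0}\tiler{b}\splus{1}$ and $\splus{0}\sminus{3}\tiler{b},\tiler{a}\sminus{0}\splus{3}$ into $\sminus{0}\splus{3}\tiler{a},\tiler{b}\splus{0}\sminus{3}$, exactly as claimed.

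Next I would compute $\sigma_{\mathrm c}$ and $\sigma_{\mathrm r}$ on the small sub-alphabets that actually occur. For the first column only the tiles $\splus{0},\tiler{a},\sminus{1}$ appear; using Definition~\ref{def: the substitution sigma part 1} for $\splus{0},\tiler{a}$, and Proposition~\ref{prop: equivariance} together with the $2\times 2$ instance of Definition~\ref{def: action on matrices} to compute $\sigma(\sminus{1})=\sigma(\iota\rho\,\splus{0})=\iota\rho\,\sigma(\splus{0})$, one obtains $\sigma_{\mathrm c}(\splus{0})=\splus{0}\tiler{a}$, $\sigma_{\mathrm c}(\tiler{a})=\sminus{1}\splus{0}$, $\sigma_{\mathrm c}(\sminus{1})=\tiler{a}\sminus{1}$; in particular $\{\splus{0},\tiler{a},\sminus{1}\}$ is $\sigma_{\mathrm c}$‑invariant. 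For the last row only $\splus{0},\tiler{a},\sminus{0},\splus{3},\sminus{3},\tiler{b}$ appear, and the same kind of computation (using $\sigma(\iota s)=\iota\sigma(s)$ and $\sigma(\rho^{3}s)=\rho^{3}\sigma(s)$) gives $\sigma_{\mathrm r}(\splus{0})=\tiler{a}\sminus{0}$, $\sigma_{\mathrm r}(\tiler{a})=\splus{0}\sminus{3}$, $\sigma_{\mathrm r}(\tiler{b})=\sminus{0}\splus{3}$, $\sigma_{\mathrm r}(\sminus{0})=\tiler{b}\splus{0}$, $\sigma_{\mathrm r}(\splus{3})=\sminus{3}\tiler{b}$, $\sigma_{\mathrm r}(\sminus{3})=\splus{3}\tiler{a}$. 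Note that $\sigma_{\mathrm r}$ interchanges the triples $\{\splus{0},\sminus{3},\tiler{b}\}$ and $\{\tiler{a},\sminus{0},\splus{3}\}$, which is the source of the parity dependence.

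For part (1) I would apply Lemma~\ref{lem: periodic} to $\sigma_{\mathrm c}$ with $P=\splus{0}\tiler{a}\sminus{1}$: a direct check shows $\sigma_{\mathrm c}^{2}(\splus{0})=\splus{0}\tiler{a}\sminus{1}\splus{0}$ and $\sigma_{\mathrm c}^{3}(\splus{0})=(\splus{0}\tiler{a}\sminus{1})^{2}\splus{0}\tiler{a}$ are both periodic with period $P$, hence $\sigma_{\mathrm c}^{l}(\splus{0})$ is periodic with period $P$ for all $l\ge 2$; the cases $l=0,1$, where $\sigma_{\mathrm c}^{l}(\splus{0})$ is a proper prefix of $P$, are read off directly from Example~\ref{example: first iterations}. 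For part (2) the period alternates, so Lemma~\ref{lem: periodic} cannot be applied directly to $\sigma_{\mathrm r}$; instead I would apply it to the $4$‑substitution $\sigma_{\mathrm r}^{2}$: one checks $\sigma_{\mathrm r}^{2}(\splus{0})=\splus{0}\sminus{3}\tiler{b}\splus{0}$ and $\sigma_{\mathrm r}^{4}(\splus{0})=(\splus{0}\sminus{3}\tiler{b})^{5}\splus{0}$ are periodic with period $\splus{0}\sminus{3}\tiler{b}$, so $\sigma_{\mathrm r}^{2m}(\splus{0})$ is periodic with period $\splus{0}\sminus{3}\tiler{b}$ for every $m\ge 1$. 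For odd levels I would write $\sigma_{\mathrm r}^{2m+1}(\splus{0})=\sigma_{\mathrm r}\bigl(\sigma_{\mathrm r}^{2m}(\splus{0})\bigr)$ and use that $\sigma_{\mathrm r}(\splus{0}\sminus{3}\tiler{b})=(\tiler{a}\sminus{0}\splus{3})^{2}$, so that applying $\sigma_{\mathrm r}$ to a word periodic with period $\splus{0}\sminus{3}\tiler{b}$ produces a word periodic with period $\tiler{a}\sminus{0}\splus{3}$; a short length count using $|\sigma_{\mathrm r}(w)|=2|w|$ confirms the two prefix conditions in the definition of ``periodic with period''. The remaining cases $l=0,1$ are again checked by hand.

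The routine part is the explicit evaluation of $\sigma$ on the non-representative tiles through the twisted action, where the only thing to watch is the bookkeeping of the $\iota$'s and of the geometric rotation $\tgeo_\rho$ on $2\times 2$ matrices. The one genuinely non-formal point — and the main obstacle — is part (2): because the period of the last row depends on the parity of $l$, one must pass to $\sigma_{\mathrm r}^{2}$ and then argue separately that a single further application of $\sigma_{\mathrm r}$ carries the even period to the square of the odd period while preserving periodicity.
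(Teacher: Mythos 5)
Your proposal is correct, and for parts (1), (3), (4) it is essentially the paper's argument: restrict $\sigma$ to the first column to get a $1$-dimensional $2$-substitution on $\{\splus{0},\tiler{a},\sminus{1}\}$, apply Lemma~\ref{lem: periodic} after checking two consecutive iterations, and transport to $\sminus{0}$ via $\sigma^l(\sminus{0})=\iota\sigma^l(\splus{0})$ acting entrywise. (Incidentally, your rule $\sigma_{\mathrm c}(\sminus{1})=\tiler{a}\,\sminus{1}$ is the correct one — it matches $\sigma(\sminus{1})=\iota\rho\,\sigma(\splus{0})$ and the first column of $\sigma^3(\splus{0})$ in Example~\ref{example: first iterations} — whereas the paper's displayed rule $\sminus{1}\mapsto\splus{0}\,\tiler{a}$ is a typo that would break periodicity.) Where you genuinely diverge is part (2): the paper exploits the symmetry $\eta\splus{0}=\splus{0}$, so that $\sigma^l(\splus{0})=\eta\sigma^l(\splus{0})$ and the last row is obtained from the first column by reversing and applying $\psi^l(\eta)\in\{\eta,\iota\eta\}$ — the parity dependence then falls out of $\psi^l$ and of $2^l\equiv(-1)^l\ (3)$, with no further computation. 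You instead compute the last-row substitution $\sigma_{\mathrm r}$ on the six tiles that occur, observe that it swaps the two triples $\{\splus{0},\sminus{3},\tiler{b}\}$ and $\{\tiler{a},\sminus{0},\splus{3}\}$, apply Lemma~\ref{lem: periodic} to $\sigma_{\mathrm r}^2$, and then push one extra application of $\sigma_{\mathrm r}$ through a periodic word using $\sigma_{\mathrm r}(\splus{0}\sminus{3}\tiler{b})=(\tiler{a}\sminus{0}\splus{3})^2$. Your route costs a little more bookkeeping (six images of $\sigma$ under the twisted action instead of two) but is self-contained and makes the parity alternation completely mechanical; the paper's route is shorter but leans on the reader's comfort with how $\eta$ and $\psi$ interact with large matrices. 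The only blemish, shared with the paper, is that for $l=0,1$ the sequences are shorter than the asserted period, so they are not literally ``periodic with period $P$'' under the paper's strict definition; this is a defect of the lemma's formulation rather than of either proof.
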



\begin{proof}
The reader should consult Example~\ref{example: first iterations}.

\ignore{
	The following few iterations of $\sig$ on $\splus{0}$ can assist in the proof:
\begin{equation*}
{\small
\splus{0}\mapsto 
\begin{array}{cc}
\splus{0} & \tilec{0}\\
\tiler{a} & \sminus{0}
\end{array}
\mapsto
\begin{array}{cccc}
\splus{0} & \tilec{0} & \tilo& \tilo\\
\tiler{a} & \sminus{0} & \tilec{0} & \tilo\\
\sminus{1}& \splus{2} & \sminus{0} & \tilec{0} \\
\splus{0}&\sminus{3} &\tiler{b} & \splus{0}
\end{array}
 \mapsto
\begin{array}{llllllll}
\splus{0} & \tilec{0}& \tilo& \tilo& \tilo& \tilo& \tilo& \tilo\\
\tiler{a} & \sminus{0}& \tilec{0} & \tilo& \tilo& \tilo& \tilo& \tilo\\
\sminus{1}& \splus{2}& \sminus{0} & \tilec{0}& \tilo& \tilo&\tilo& \tilo\\
\splus{0}&\sminus{3}&\tiler{b} & \splus{0}&\tilec{0} & \tilo&\tilo& \tilo\\
\tiler{a}&\splus{1}&\sminus{2}&\tiler{a}&\sminus{0} & \tilec{0}&\tilo& \tilo\\
\sminus{1}&\tilec{1}&\tilec{2}&\splus{2}&\tiler{b}&\splus{0}& \tilec{0} & \tilo\\
\splus{0} & \tilec{0}& \tilec{3}& \sminus{3}&\splus{1}& \sminus{2}&\splus{0} & \tilec{0}\\
\tiler{a} & \sminus{0}&\splus{3}&\tiler{a} &\sminus{0}&\splus{3}&\tiler{a} & \sminus{0}
\end{array}
}
\end{equation*}
}

\begin{enumerate}
	\item The first column of $\sig^\infty(\splus{0})$ is a $1$-dimensional $2$-substitution tiling of $\bN$ obtained by restricting the substitution $\sig$ to the first column. Let us write it explicitly for the reader (written horizontally for convenience): 
	$$\splus{0}\mapsto \splus{0}\; \tiler{a}\,;\quad \tiler{a}\mapsto \sminus{1}\; \splus{0}\,;\quad \sminus{1}\mapsto \splus{0}\; \tiler{a}\,.$$
	The claim follows by applying Lemma~\ref{lem: periodic} and noticing that already the second and third iterations are periodic
	with same period:
	$$\splus{0}\mapsto \splus{0}\; \tiler{a}\mapsto \splus{0}\; \tiler{a}\; \sminus{1}\; \splus{0}\mapsto 
	\splus{0}\; \tiler{a}\; \sminus{1}\; \splus{0}\; \tiler{a}\; \sminus{1}\; \splus{0}\; \tiler{a}\,.
	$$
\ignore{
As can be seen, there are only three symbols in this substitution tiling. Moreover, it readily follows by induction that in this substitution, at each finite stage an appearance of $\splus{0}$ is followed by $\tiler{a}$, an appearance of $\tiler{a}$ is followed by $\sminus{1}$ and an appearance of $\sminus{1}$ is followed by $\splus{0}$. This finishes the proof of the first claim in the statement. 
}

	\item The second claim follows from equivariance: recall that $\eta\splus{0} = \splus{0}$ and 
	so that $\sig^l(\splus{0}) = \sig^l(\eta\splus{0})=
	\eta\sig^l(\splus{0})$. The action of $\eta$ on matrices of size $2^l$ is obtained by reflecting along the anti-diagonal and then either applying  $\eta$ or $\iota \eta$ to the entries, according to whether $l$ is even or odd, respectively. Reflecting along the anti-diagonal takes the first column to the last row in reversed order. By the first claim, the first column of $\sig^l(\splus{0})$ is periodic with period
	\begin{align*}
		\begin{array}{c} \splus{0}\\ \tiler{a} \\ \sminus{1}\end{array}.
	\end{align*}
	Taking this period in a reversed order and applying $\psi^l(\eta)$ gives
	\begin{align*}
		\mat{
			 \eta\sminus{1} & \eta\tiler{a} & \eta\splus{0}
		}&=\mat{ 
		\sminus{3} & \tiler{b} & \splus{0}
		}, &&\textrm{for $l$ even}\,,
		\\
		\mat{
			\iota\eta\sminus{1} & \iota\eta\tiler{a} & \iota\eta\splus{0}
		}&=\mat{
		\splus{3} & \tiler{a} & \sminus{0}
		}, &&\textrm{for $l$ odd}\,.
	\end{align*}
	Since $2^l\equiv(-1)^l(3)$, the last column of $\sig^l(\splus{0})$ starts from the last tile in this period if $l$ is even, and from the second if $l$ is odd. This gives the second claim.
	
	\item The third claim also follows from the equivariance of the $G$-action with respect to $\sig$ in Corollary~\ref{cor:key},
	since $\sig^l (\sminus{0})=\sig^l( \iota \splus{0}) = \iota \sig^l(\splus{0})$ and the action of $\iota$ on matrices amounts to applying $\iota$ to the entries. In particular this applies to the first column. 

	\item The forth claim follows along the same lines, for example, by using the equivariance $\sig^l(\iota \splus{0}) = \iota \sig^l(\sminus{0})$ and the second claim as an input. 
\end{enumerate}
\end{proof}

\ignore{
\begin{proof}
	By definition of $G$ and of the action of $G$ on $\Sigma$ in \eqref{eq:G} and \eqref{eq:G-action}, the symbol $3$ satisfies $\rho1=3$. Therefore, the definition of the substitution $\sigma$ in \eqref{eq:sigma}, \eqref{eq:G-of-matrices} and \eqref{eq:symbols-equivariance}, gives
	$$\sigma(3) = \sigma(\rho1) = \rho\sigma(1) = \rho\left(\begin{array}{cc}
		1 &10 \\
		9 &0
	\end{array}\right) = \begin{array}{cc}
		\rho\eta\iota9 & \rho\eta\iota1 \\
		\rho\eta\iota0  & \rho\eta\iota10
	\end{array} = \begin{array}{cc}
		8 & 2 \\
		3  & 11
	\end{array}.$$
	Note that
	\begin{equation}\label{eq:first-column-sigma0}
		\begin{aligned}
			\sigma(0)(1,1)=0\,,\quad \sigma(8)(1,1)=3\,,\quad 	\sigma(3)(1,1)=8\,,\\
			\sigma(0)(2,1)=8\,,\quad \sigma(8)(2,1)=0\,,\quad 	\sigma(3)(2,1)=3\,.
		\end{aligned}
	\end{equation}
	Let $\left(c_{m,l}\right)_{m=1}^{2^l}$ denote the first column of $\sigma^l(0)$. Clearly $c_{1,0}=0$. Continue by induction on $l$. Assume that every $i,k$ such that $1\leq 3k+i\leq 2^l$ satisfy
	\begin{equation}\label{eq:sigmal0-column-induction}
	c_{3k+i,l} = 
	\begin{cases*}
		0 \quad \textrm{if } [i]_3=1\\
		8 \quad \textrm{if } [i]_3=2\\
		3 \quad \textrm{if } [i]_3=3
	\end{cases*},
	\end{equation}
	and let $i,k$ satisfy $1\leq 3k+i\leq 2^{l+1}$. Then
	\begin{equation*}\label{eq:length-3-period}
		\left(\left[\left\lceil\frac{3k+i}{2}\right\rceil\right]_3,[3k+i]_2\right) = 
		\begin{cases*}
			(2,2) \quad \textrm{if } ([k]_2,[i]_3)=(1,1)\,,\\
			(3,1) \quad \textrm{if } ([k]_2,[i]_3)=(1,2)\,,\\
			(3,2) \quad \textrm{if } ([k]_2,[i]_3)=(1,3)\,,\\
			(1,1) \quad \textrm{if } ([k]_2,[i]_3)=(2,1)\,,\\
			(1,2) \quad \textrm{if } ([k]_2,[i]_3)=(2,2)\,,\\
			(2,1) \quad \textrm{if } ([k]_2,[i]_3)=(2,3)\,.
		\end{cases*}
	\end{equation*}
	So \eqref{eq:first-column-sigma0} and \eqref{eq:sigmal0-column-induction} give 
	$$
	c_{3k+i,l+1} = \sigma\left(c_{\left\lceil\frac{3k+i}{2}\right\rceil,l}\right)([3k+i]_2,1) = 
	\begin{cases*}
		0 \quad \textrm{if } [i]_3=1\,,\\
		8 \quad \textrm{if } [i]_3=2\,,\\
		3 \quad \textrm{if } [i]_3=3\,.
	\end{cases*}
	$$
	Similarly, the equation $\rho0=2$ implies that
	$$\sigma(2) = \sigma(\rho0) = \rho\sigma(0) = \rho\left(\begin{array}{cc}
		0 & 10 \\
		8 & 1
	\end{array}\right) = \begin{array}{cc}
	\rho\eta\iota8 & \rho\eta\iota0 \\
	\rho\eta\iota1 & \rho\eta\iota10
	\end{array} = \begin{array}{cc}
		9 & 3 \\
		2 & 11
	\end{array},$$
	and
	\begin{equation*}\label{eq:first-column-sigma1}
		\begin{aligned}
			\sigma(1)(1,1)=1\,,\quad \sigma(9)(1,1)=2\,,\quad 	\sigma(2)(1,1)=9\,,\\
			\sigma(1)(2,1)=9\,,\quad \sigma(9)(2,1)=1\,,\quad 	\sigma(2)(2,1)=2\,.
		\end{aligned}
	\end{equation*}
	Denote the first column of $\sigma^l(1)$ by $\left(\overline{c}_{m,l}\right)_{m=1}^{2^l}$. A proof by induction similar to the proof of \eqref{eq:sigmal0-column-induction} gives
	\begin{equation}\label{eq:sigmal1-column-induction}
		\overline{c}_{3k+i,l} = 
		\begin{cases*}
			1 \quad \textrm{if } [i]_3=1\\
			9 \quad \textrm{if } [i]_3=2\\
			2 \quad \textrm{if } [i]_3=3
		\end{cases*}.
	\end{equation}
	For every $l\geq0$ let $\left(r_{n,l}\right)_{n=1}^{2^l}$ and $\left(\overline{r}_{n,l}\right)_{n=1}^{2^l}$ denote the last row of $\sigma^l(0)$ and $\sigma^l(1)$, respectively. Corollary \ref{cor:key} with $s=0$ and $g=\iota$ together with \eqref{eq:G-of-matrices} imply that every $1\leq n\leq 2^l$ satisfies
	\begin{equation}\label{eq:sigmal0-column}
		r_{n,l} =
		\begin{cases*} 
			\iota \overline{c}_{2^l-n+1,l}, \quad \textrm{for $l$ even,} \\
			\eta \overline{c}_{2^l-n+1,l},\quad \textrm{for $l$ odd.}
		\end{cases*}
	\end{equation}
	and
	\begin{equation}\label{eq:sigmal1-column}
		\overline{r}_{n,l} =
		\begin{cases*} 
			\iota c_{2^l-n+1,l}, \quad \textrm{for $l$ even,} \\
			\eta c_{2^l-n+1,l},\quad \textrm{for $l$ odd.}
		\end{cases*}
	\end{equation}
	Since 
	$$
	\left[2^l\right]_3 = 
		\begin{cases*} 
			1 ,\quad \textrm{for $l$ even,} \\
			2 ,\quad \textrm{for $l$ odd,}
		\end{cases*}
	$$
	and since \eqref{eq:G} and \eqref{eq:G-action} give $\iota2=7$, $\eta9=8$, $\eta2=6$ and $\eta3=7$, Equations \eqref{eq:sigmal0-column-induction}, \eqref{eq:sigmal1-column-induction}, \eqref{eq:sigmal0-column} and \eqref{eq:sigmal1-column} finish the proof.

\end{proof}
}

\begin{lemma}\label{lem:sigmal8}
	Let $l\geq-1$. 
	\begin{enumerate}
		\item The first row of $\sigma^{l+1}\left(\tiler{a}\right)$ is periodic with period
		\begin{equation}\label{eq:sigmal8_row}
			\begin{aligned}
				\begin{array}{ccc} \sminus{1}&\splus{2}&\tiler{b} 	\end{array},\quad &\textrm{for $l$ even}\,,
				\\
				\begin{array}{ccc} 	\tiler{a}&\splus{1}&\sminus{2}
				\end{array},\quad &\textrm{for $l$ odd}\,.
			\end{aligned}
		\end{equation}
		\item The last column of $\sigma^{l+1}\left(\tiler{a}\right)$ is periodic with period
		\begin{equation}\label{eq:sigmal8_column}
			\begin{aligned}
				\begin{array}{c}  \splus{2} \\ \sminus{3}\\ \tiler{a}  \end{array},\quad &\textrm{for $l$ even}\,,&
				\begin{array}{c} \tiler{a} \\ \splus{2} \\ \sminus{3} \end{array},\quad &\textrm{for $l$ odd}\,.
			\end{aligned}
		\end{equation}
		\item The first row of $\sigma^{l+1}\left(\tiler{b}\right)$ is periodic with period 
		\begin{equation*}
			\begin{aligned}	
				\begin{array}{ccc} \splus{1}&\sminus{2}&\tiler{a}
				\end{array},\quad 
				&\textrm{for $l$ even}\,,
				\\
				\begin{array}{ccc} \tiler{b}&\sminus{1} &\splus{2}
				\end{array},\quad 
				&\textrm{for $l$ odd}\,.
			\end{aligned}
		\end{equation*}
		\item The last column of $\sigma^{l+1}\left(\tiler{b}\right)$ is periodic with period 
		\begin{equation*}
			\begin{aligned}	
				\begin{array}{c} 
					\sminus{2}\\ 
					\splus{3} \\ 
					\tiler{b}
				\end{array},\quad 
				&\textrm{for $l$ even}\,,&
				\begin{array}{c} 
					\tiler{b}\\ 
					\splus{2}\\ 
					\splus{3} 
				\end{array},\quad
				&\textrm{for $l$ odd}\,.
			\end{aligned}
		\end{equation*}
	\end{enumerate}
\end{lemma}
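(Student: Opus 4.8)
The plan is to reduce each of the four statements to a one-dimensional substitution tiling and then apply Lemma~\ref{lem: periodic}, in the spirit of the proof of Lemma~\ref{lem:sigmal0}(1). Since $\sig$ acts on a square matrix by replacing each entry with its $2\times 2$ image and stacking the resulting blocks, the first row of $\sig^{l+1}(s)$ is obtained from the first row of $\sig^l(s)$ by replacing each tile $t$ with the first row of $\sig(t)$, and the last column of $\sig^{l+1}(s)$ is obtained from the last column of $\sig^l(s)$ by replacing each $t$ with the last column of $\sig(t)$. Writing $\wh\sig_{\mathrm r}(t)$ for the first row of $\sig(t)$ and $\wh\sig_{\mathrm c}(t)$ for the last column of $\sig(t)$ (read top to bottom), these are $1$-dimensional $2$-substitutions, and the first row, respectively last column, of $\sig^m(\tiler{a})$ equals $\wh\sig_{\mathrm r}^{\,m}(\tiler{a})$, respectively $\wh\sig_{\mathrm c}^{\,m}(\tiler{a})$, for every $m\ge 0$; we will use $m=l+1$.

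The first step is to compute $\wh\sig_{\mathrm r}$ and $\wh\sig_{\mathrm c}$ on the tiles that occur, using the equivariance $\sig(gs)=g\sig(s)$ of Proposition~\ref{prop: equivariance}, Definition~\ref{def: the substitution sigma part 1}, and the rule~\eqref{eq: acting with eta on 2 matrix} for the twisted action on $2\times 2$ matrices; this is what lets one avoid writing $\sig$ on all $15$ basic tiles, via the identities $\sminus{1}=\iota\rho\splus{0}$, $\splus{2}=\rho^2\splus{0}$, $\sminus{3}=\iota\rho^3\splus{0}$. Carrying this out, one finds that $\{\tiler{a},\sminus{1},\splus{2},\tiler{b},\splus{1},\sminus{2}\}$ is $\wh\sig_{\mathrm r}$-invariant with $\wh\sig_{\mathrm r}(\tiler{a})=\sminus{1}\,\splus{2}$, $\wh\sig_{\mathrm r}(\sminus{1})=\tiler{a}\,\splus{1}$, $\wh\sig_{\mathrm r}(\splus{2})=\sminus{2}\,\tiler{a}$, and the remaining three values obtained by applying $\iota$ to these; and that $\{\tiler{a},\splus{2},\sminus{3}\}$ is $\wh\sig_{\mathrm c}$-invariant with $\wh\sig_{\mathrm c}(\tiler{a})=\splus{2}\,\sminus{3}$, $\wh\sig_{\mathrm c}(\splus{2})=\tiler{a}\,\splus{2}$, $\wh\sig_{\mathrm c}(\sminus{3})=\sminus{3}\,\tiler{a}$.

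The one genuine subtlety is that the two periods in each of (1) and (2) alternate with the parity of $l$, so Lemma~\ref{lem: periodic} cannot be applied to $\wh\sig_{\mathrm r}$ directly. The remedy is to apply it to the $4$-substitution $\wh\sig_{\mathrm r}^{\,2}$: from the tables one checks that $\wh\sig_{\mathrm r}$ carries a sequence periodic with period $\tiler{a}\,\splus{1}\,\sminus{2}$ to one periodic with period $\sminus{1}\,\splus{2}\,\tiler{b}$ and conversely (indeed $\wh\sig_{\mathrm r}(\tiler{a}\,\splus{1}\,\sminus{2})=(\sminus{1}\,\splus{2}\,\tiler{b})^2$ and $\wh\sig_{\mathrm r}(\sminus{1}\,\splus{2}\,\tiler{b})=(\tiler{a}\,\splus{1}\,\sminus{2})^2$), so $\wh\sig_{\mathrm r}^{\,2}$ preserves periodicity with period $\tiler{a}\,\splus{1}\,\sminus{2}$; since $\wh\sig_{\mathrm r}^{\,2}(\tiler{a})$ and $\wh\sig_{\mathrm r}^{\,4}(\tiler{a})$ are both periodic with that period, Lemma~\ref{lem: periodic} gives $\wh\sig_{\mathrm r}^{\,2m}(\tiler{a})$ periodic with period $\tiler{a}\,\splus{1}\,\sminus{2}$ for all $m\ge1$, and then $\wh\sig_{\mathrm r}^{\,2m+1}(\tiler{a})=\wh\sig_{\mathrm r}(\wh\sig_{\mathrm r}^{\,2m}(\tiler{a}))$ is periodic with period $\sminus{1}\,\splus{2}\,\tiler{b}$ for all $m\ge1$. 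This is (1) for $l\ge1$; the cases $l\in\{-1,0\}$ are read off Example~\ref{example: first iterations}. Statement (2) is identical with $\wh\sig_{\mathrm c}^{\,2}$ and the periods $\tiler{a}\,\splus{2}\,\sminus{3}$, $\splus{2}\,\sminus{3}\,\tiler{a}$.

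Finally, (3) and (4) follow from (1) and (2) by equivariance: $\sig^{l+1}(\tiler{b})=\sig^{l+1}(\iota\tiler{a})=\iota\,\sig^{l+1}(\tiler{a})$ by Corollary~\ref{cor:key}, and $\iota$ acts on $\Mat_{2^{l+1}}(\Sig)$ entrywise, so the first row and last column of $\sig^{l+1}(\tiler{b})$ are obtained from those of $\sig^{l+1}(\tiler{a})$ by applying the involution $\iota$ (which swaps $\tiler{a}\leftrightarrow\tiler{b}$ and $\splus{i}\leftrightarrow\sminus{i}$) to each tile. I expect the only real work to be in the first step: carefully tracking the twisted $G$-action when evaluating $\sig$ on the tiles $\sminus{i},\splus{i}$, and verifying that the tile sets above are genuinely closed under $\wh\sig_{\mathrm r}$ and $\wh\sig_{\mathrm c}$.
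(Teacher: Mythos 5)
Your proof is correct, and for parts (1)--(2) it takes a somewhat different route than the paper. The paper derives part (1) from the block decomposition $\sigma^{l+1}\left(\tiler{a}\right)=\mat{\sigma^l(\sminus{1})&\sigma^l(\splus{2})\\ \sigma^l(\splus{0})&\sigma^l(\sminus{3})}$ together with the $\rho$-equivariance of Corollary~\ref{cor:key}: the first row is the concatenation of the ($\rho$-rotated, reversed) first column of $\sigma^l(\sminus{0})$ and last row of $\sigma^l(\splus{0})$, both already known from Lemma~\ref{lem:sigmal0}, and the phase of the concatenation is then fixed by Lemma~\ref{lem: periodic} applied to $\sigma^2$ and an inspection of the first few iterations. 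Part (2) is then read off from part (1) via the symmetry $\rho\tiler{a}=\tiler{a}$. You instead work directly with the induced one-dimensional substitutions $\wh{\sig}_{\mathrm{r}}$ and $\wh{\sig}_{\mathrm{c}}$ on the first row and last column --- exactly the device the paper itself uses for the first column in Lemma~\ref{lem:sigmal0}(1) --- verify closure of the relevant tile sets, and apply Lemma~\ref{lem: periodic} to the square of the substitution to handle the parity alternation. I checked your tables of $\wh{\sig}_{\mathrm{r}}$ and $\wh{\sig}_{\mathrm{c}}$ against the twisted action and they are correct, as are the identities $\wh{\sig}_{\mathrm{r}}(\tiler{a}\,\splus{1}\,\sminus{2})=(\sminus{1}\,\splus{2}\,\tiler{b})^2$ and its companions; parts (3)--(4) via $\iota$-equivariance coincide with the paper. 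Your route buys independence from Lemma~\ref{lem:sigmal0} and from the symmetry $\rho\tiler{a}=\tiler{a}$, at the cost of tabulating the row and column substitutions explicitly; the paper's route reuses already-established data and so involves less computation.

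One remark: applying $\iota$ to the period $\tiler{a}\,\splus{2}\,\sminus{3}$ of part (2) for $l$ odd yields $\tiler{b}\,\sminus{2}\,\splus{3}$, not the period $\tiler{b}\,\splus{2}\,\splus{3}$ displayed in part (4) of the statement; the latter appears to be a typo (and is confirmed as such by computing the last column of $\sigma^2\left(\tiler{b}\right)=\iota\sigma^2\left(\tiler{a}\right)$ directly). Your argument, like the paper's, proves the corrected version; it would be worth noting this explicitly rather than silently deriving a period that disagrees with the one stated.
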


\begin{proof}
	The statements are obvious for $l=-1$. For every $l\geq0$, the $(l+1)$\textsuperscript{st} iteration of $\sigma$ on $\tiler{a}$ can be expressed using the $l$\textsuperscript{th} iteration of $\sigma$ as  
	\begin{equation}\label{eq:sigmal8}
		\sigma^{l+1}\left(\tiler{a}\right) = \mat{
		\sigma^{l}(\sminus{1}) &\sigma^{l}(\splus{2}) \\
		\sigma^{l}(\splus{0}) &\sigma^{l}(\sminus{3})
		} = 
		\mat{
		\sigma^{l}(\rho \sminus{0}) &\sigma^{l}(\rho^2\splus{0}) \\
		\sigma^{l}(\splus{0}) &\sigma^{l}(\rho^3\sminus{0})
		}.
	\end{equation}
	\begin{enumerate}
		\item By \eqref{eq:sigmal8}, the first row of $\sig^{l+1}\left(\tiler{a}\right)$ is obtained by concatenating the first rows of $\sig^l(\rho\sminus{0})=\rho\sig^l(\sminus{0})$ and $\sig^l(\rho^2\splus{0}) = \rho^2\sig^l(\splus{0})$. 
		Recall that the action of $\rho$ on a matrix amounts to rotating it $90\degree$ clockwise and acting with either $\rho$ or $\iota\rho$ on each of the entries, according to whether the size of the matrix is an even or odd power of $2$, respectively. Applying Lemma~\ref{lem:sigmal0}, which 
		includes the information about the first column of $\sig^l(\sminus{0})$ and the last row of $\sig^l(\splus{0})$, which transform under $\rho$ and $\rho^2$ into these rows in reversed order:
		\begin{itemize}
		\item If $l$ is even then the first row of $\sig^{l+1}\left(\tiler{a}\right)$ is obtained by concatenating two periodic sequences of the same period 
		$$\splus{2}\; \tiler{b}\;\sminus{1}$$ 
		\item If $l$ is odd then the first row of $\sig^{l+1}\left(\tiler{a}\right)$ is obtained by concatenating
		two periodic sequences of the same period 
		$$\sminus{2}\; \tiler{a}\;\splus{1}$$
		\end{itemize}
		For the conclusion of the claim 
		it is enough to justify why the concatenation is done in a way which gives \eqref{eq:sigmal8_row}. By Lemma~\ref{lem: periodic} (applied to $\wh{\sigma}=\sigma^2$) this consistency can be checked by verifying it on the first few iterations of this substitution tiling, which can be done by examining Figure~\ref{fig:thm_infls}.
	
		\item Recall that $\rho \tiler{a} = \tiler{a}$. It follows from the equivariance that $\sig^{l+1}\left(\tiler{a}\right) = \rho\sig^{l+1}\left(\tiler{a}\right)$, so by the first claim, the last column of $\sig^{l+1}\left(\tiler{a}\right)$ is obtained from its first row by applying to its entries $\iota\rho$ or $\rho$ according to whether $l$ is even or odd, respectively, and writing 
		it vertically. 
		By \eqref{eq:sigmal8_row}, this gives 
		\begin{equation*}
			\begin{aligned}
				&&
				\begin{array}{c}
					\iota\rho\sminus{1}\\ \iota\rho\splus{2} \\ 
					\iota\rho\tiler{b}
				\end{array} = 
				\begin{array}{c} 
					\splus{2} \\ 
					\sminus{3}\\ 
					\tiler{a}
				\end{array},\quad 
				&\textrm{for $l$ even}\,,&&
				\begin{array}{c}
					\rho\tiler{a} \\
					\rho\splus{1} \\ 
					\rho\sminus{2}
				\end{array} = 
				\begin{array}{c} 
					\tiler{a} \\
					\splus{2} \\ 
					\sminus{3}
				\end{array},\quad 
				&\textrm{for $l$ odd}\,,
			\end{aligned}
		\end{equation*}
		which is precisely \eqref{eq:sigmal8_column}.
		\ignore{
		By \eqref{eq:sigmal8}, the last column of 
		$\sig^{l+1}\left(\tiler{a}\right)$ is obtained by stacking the last column of $\sig^l(\rho^2\splus{0})$ on top of the last 
		column of $\sig^l(\sminus{3})$. Using equivariance gives that the last column of 
		$\sig^l(\rho^2\splus{0})=\rho^2\sig(\splus{0})$ is the first column of $\sig^l(\splus{0})$ written in reversed order after acting on the tiles with $\rho^2$. By  Lemma~\ref{lem:sigmal0} this amounts to saying that this column is periodic with period
		$$\begin{array}{ccc}
		\sminus{3}\\
		\tiler{a}\\
		\splus{2}
		\end{array}.$$
		As explained, this column is stacked above the last column of $\sig^l(\sminus{3}) = \iota\rho^3\sig^l(\splus{0})$. 
		By similar considerations, this equals
		$$\begin{array}{ccc}
		\sminus{3}\\
		\tiler{a}\\
		\splus{2}
		\end{array}.
		$$
		Stacking these displayed periods one above the other establishes the third claim.
		}
		
		\item Since $\sig^l\left(\tiler{b}\right) = \iota\sig^l\left(\tiler{a}\right)$, the third claim follows from \eqref{eq:sigmal8_row}.
		
		\item By the same reasoning, the fourth claim follows from \eqref{eq:sigmal8_column}.
	\end{enumerate}
\ignore{		
	
	Recall that $\sigma^3=\rho1$ and $4=\rho^20$. Therefore, by Corollary \ref{cor:key}, the first row of $\sigma^l(3)$ and $\sigma^l(4)$ are determined by the first column of $\sigma^l(1)$ and the last row of $\sigma^l(0)$, respectively. The last column of $\sigma^l(4)$ is determined by the first column of $\sigma^l(0)$, while since $7=\rho^3(1)$, the last column of $\sigma^l(7)$ is determined by the last row of $\sigma^l(1)$. Therefore, Lemma \ref{lem:sigmal0} implies \eqref{eq:sigmal8_row} and \eqref{eq:sigmal8_column}.
	Similarly, the fact that $\eta8=9$ gives  
	\begin{equation}\label{eq:sigma9}
	\sigma(9) = \begin{array}{cc}
		2 &5 \\
		1 &6
	\end{array},
	\end{equation}
	so the definition of $\sigma$ gives
	$$\sigma^{l+1}(9) = \begin{array}{cc}
		\sigma^{l}(2) &\sigma^{l}(5) \\
		\sigma^{l}(1) &\sigma^{l}(6)
	\end{array}.$$
	Therefore, since $2=\rho0$ and $5=\rho^21$, Lemma \ref{lem:sigmal0} implies \eqref{eq:sigmal9_row} and \eqref{eq:sigmal9_column} in a similar manner to the above.
}
\end{proof}

\subsection{Applying the coding}\label{sec:applying-the-coding}

Prior to the consideration of $\kappa_1\left(\sigma^\infty(\splus{0})\right)$, the following observation is useful:

\begin{lemma}\label{lem:consistent}
	The coding $\kappa$ and the tiling $\sigma^\infty(\splus{0})$ are consistent with respect to an overlap of length $1$, in the sense of \cite[Definition 4.8]{ALN}.
\end{lemma}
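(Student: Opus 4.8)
As noted in the discussion preceding the statement, what must be shown is that whenever two basic tiles $s,s'\in\Sig$ occur at adjacent positions in $\sigma^\infty(\splus{0})$ — say $s$ immediately left of $s'$, or $s$ immediately above $s'$ — the $\kappa$-images $\kappa(s)$ and $\kappa(s')$ agree along the length-$1$ border between them; by \cite[Definition 4.8]{ALN} this is precisely consistency with an overlap of length $1$ (the four one-entry corner conditions then follow from the row and column conditions). Thus the proof has two parts: (i) produce the finite list of ordered pairs of basic tiles that occur at horizontally, respectively vertically, adjacent positions in $\sigma^\infty(\splus{0})$; and (ii) for each such pair check the relevant equality of two vectors in $\ft^5$, reading the $\kappa$-images off \eqref{eq:kappa} and the equivariance \eqref{eq:kappa-equivariance}.

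For part (i) I would argue as follows. Since $\sigma^\infty(\splus{0})=\lim_l\sigma^l(\splus{0})$ and $\sigma^l(\splus{0})$ is obtained from $\sigma^{l-1}(\splus{0})$ by subdividing each entry $s$ into the $2\times2$ block $\sigma(s)$, every adjacent pair of basic tiles occurring in $\sigma^\infty(\splus{0})$ is either \emph{internal} to some block $\sigma(s)$ with $s$ occurring in $\sigma^\infty(\splus{0})$, or lies along the common edge of two blocks $\sigma(s),\sigma(s')$ for some adjacent occurring pair $(s,s')$ — and in the latter case it is one of the two basic-tile pairs $(\sigma(s)_{1,2},\sigma(s')_{1,1})$, $(\sigma(s)_{2,2},\sigma(s')_{2,1})$ (horizontal case) or their vertical analogues. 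Tracing this recursion to the base level shows that the set $\mathcal H$ of occurring horizontal pairs is the \emph{smallest} subset of $\Sig\times\Sig$ that contains the internal horizontal pairs $\{(\sigma(s)_{1,1},\sigma(s)_{1,2}),(\sigma(s)_{2,1},\sigma(s)_{2,2}):s\in\Sig\}$ and is closed under $(s,s')\mapsto(\sigma(s)_{1,2},\sigma(s')_{1,1}),\,(\sigma(s)_{2,2},\sigma(s')_{2,1})$, and similarly for the set $\mathcal V$ of occurring vertical pairs; here I use that all $15$ tiles of $\Sig$ occur, which is visible in Example~\ref{example: first iterations}, and that the internal pairs of the remaining $\sigma$-images are determined from the four in Definition~\ref{def: the substitution sigma part 1} via Corollary~\ref{cor:key}. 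Computing these two least fixed points is a finite, terminating procedure (both $\mathcal H$ and $\mathcal V$ live in a finite set, and the closure operators are monotone), and it stabilises after a bounded number of inflation steps; Lemmas~\ref{lem:sigma0_diagonal}, \ref{lem:sigmal0} and \ref{lem:sigmal8}, which describe the boundary rows and columns of $\sigma^l(\splus{0})$, $\sigma^l(\sminus{0})$, $\sigma^l(\tiler{a})$, $\sigma^l(\tiler{b})$ as periodic sequences with explicit length-$3$ periods, give the same seam information in a more readable geometric form and serve as a cross-check.

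For part (ii) the equivariance $\kappa(\rho s)=\rho\kappa(s)$ cuts the bookkeeping down: the clockwise rotation $s\mapsto\rho s$ interchanges horizontal and vertical adjacencies and transports the column-overlap condition to the row-overlap condition, so it is enough to verify one representative from each $\langle\rho\rangle$-orbit of pairs directly from the six explicit matrices in \eqref{eq:kappa} — one only has to be careful that the entrywise $\iota$-twist appearing in Corollary~\ref{cor:key} and in \eqref{eq: rho on blocks} is accounted for when matching a pair to its rotated partner, since $\kappa$ is equivariant for $\rho$ but not for $\iota$. Each remaining check is then the comparison of the last column (or bottom row) of one $5\times5$ matrix over $\ft$ with the first column (or top row) of another. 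The main obstacle is part (i): not the computation itself, which is short, but certifying that the enumeration of occurring adjacencies is exhaustive. The clean way to secure this is exactly the least-fixed-point characterisation above, whose output can moreover be matched against the computed patch of $\kappa_1(\sigma^\infty(\splus{0}))$ appearing in Figure~\ref{fig:thm_flat}; once the list is fixed, the verification in part (ii) is mechanical.
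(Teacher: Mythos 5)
Your proposal is correct and follows essentially the same route as the paper: both reduce the statement to a finite list of adjacent pairs of basic tiles occurring in $\sigma^\infty(\splus{0})$, certify that this list is exhaustive by a stabilisation argument (the paper checks that $\sigma^5(\splus{0})$ and $\sigma^6(\splus{0})$ contain the same $64$ square patterns of size $2$ and invokes \cite[Lemma 4.14]{ALN}, while your least-fixed-point computation on the domino transfer map is the same certification phrased abstractly), and then verify the border agreement of the $\kappa$-images by a finite computation. Your extra use of the equivariance \eqref{eq:kappa-equivariance} to transport column-overlap checks to row-overlap checks is a harmless optimisation that the paper skips, since it delegates the entire verification to a computer program.
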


\begin{proof}
	The claim means that every square pattern of size $2$	
	$$\begin{array}{cc}
		a & b \\
		c & d
	\end{array}$$
	which is contained in $\sigma^\infty(\splus{0})$, satisfies 
	\begin{equation}\label{eq:consistent1}
		\left.\kappa(a)\right|_{[1,5]\times\{5\}} = \left.\kappa(b)\right|_{[1,5]\times\{1\}}\quad \text{and} \quad 
		\left.\kappa(c)\right|_{[1,5]\times\{5\}} = \left.\kappa(d)\right|_{[1,5]\times\{1\}}\,,
	\end{equation}
	and
	\begin{equation}\label{eq:consistent2}
	\left.\kappa(a)\right|_{\{5\}\times[1,5]} = \left.\kappa(c)\right|_{\{1\}\times[1,5]}\quad\text{and}\quad
	\left.\kappa(b)\right|_{\{5\}\times[1,5]} = \left.\kappa(d)\right|_{\{1\}\times[1,5]}\,.
	\end{equation} 
	It is a finite check verified with a computer program that there are $64$ square patterns of size $2$ in both $\sigma^5(\splus{0})$ and $\sigma^6(\splus{0})$ (see Figure \ref{fig:thm_infls}). Therefore, these are precisely the square patterns of size $2$ in $\sigma^\infty(\splus{0})$ (cf. \cite[Lemma 4.14]{ALN}). Verifying \eqref{eq:consistent1} and \eqref{eq:consistent2} for them is a finite check which is done with a computer program.
\end{proof}

\begin{convention}\label{conv: overlap}
Due to Lemma \ref{lem:consistent}, and following \cite[Section 4.3]{ALN}, it makes sense to extend the definition of the image under $\kappa_1$ of any rectangular pattern $P$ in $\sigma^\infty(\splus{0})$ by one column and one row. This means that if $P$ is a rectangular pattern of height and width $m\times n$ of tiles in $\Sig$, then $\kappa_1(P)$ is a rectangular pattern of height and width $(4m+1)\times(4n+1)$ of tiles in $\ft$ which is composed out of the images under $\ka$ of the tiles in $P$ stacked together with an overlap of length $1$. 
\end{convention}
\begin{convention}\label{conv: overlap2}
	In order to handle overlaps as in Convention \ref{conv: overlap}, it is helpful to introduce the following notation: given $k,l,m,n\geq1$ and $mn$ matrices of height and width $k+1$ and $l+1$, such that
	\begin{align*}
		A_{i,j}(k,\cdot) &= A_{i+1,j}(1,\cdot)\,,
		&&\textrm{for every $1\leq i\leq m-1$ and $1\leq j\leq 	n\,,$} &&\textrm{and}\\
		A_{i,j}(\cdot,l) &= A_{i,j+1}(\cdot,1)\,,&&
		\textrm{for every $1\leq i\leq m$ and $1\leq j\leq n-1$}\,,&&
	\end{align*}
 	then the matrix with square brackets
	\begin{equation}\label{eq:convention}
	\br{\begin{array}{ccc}
			A_{1,1}&\cdots&A_{1,n}\\
			\vdots &\vdots & \vdots\\ 
			A_{m,1}&\cdots&A_{m,n}
		\end{array}
	}
	\end{equation}
	is a matrix of height $km+1$ and width $ln+1$ which is composed out of the matrices $A_{i,j}$ arranged as blocks in a combined matrix in a way so that any two adjacent matrices overlap in $1$ row or column according to their respective position. Formally, for every $1\leq i \leq km+1$ and $1\leq j \leq ln+1$, denote 
	\begin{align*}
		i'&= 	\min\left\{\left\lceil\frac{i}{k}\right\rceil,m\right\},&&\textrm{ and } &
		j'&=
		\min\left\{\left\lceil\frac{j}{l}\right\rceil,n\right\}.
	\end{align*}
	Then the $(i,j)$\textsuperscript{th} entry of the matrix in \eqref{eq:convention} is
	$$
	A_{i',j'}
	\left(
	i - k\left(i'-1\right),
	j - l\left(j'-1\right)
	\right).
	$$
\end{convention}

These two conventions are particularly useful for the statement and proof of the following lemma regarding the symmetries of $\kappa_1$ with respect to the action of $\rho$.
 
\begin{lemma}\label{lem: kappa equivariance}
For every $l\ge 0$ and $s\in \Sig$ the image of $\rho \sig^l(s)$ under $\kappa_1$ satisfies
\begin{equation}\label{eq: kappa equiv}
\ka_1\left(\rho \sig^l(s)\right) = \rho \ka_1\left(\sig^l\left(\iota^l s\right)\right)\,.
\end{equation}
\end{lemma}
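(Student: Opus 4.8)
The plan is to reduce the identity \eqref{eq: kappa equiv} to the $\rho$-equivariance \eqref{eq:kappa-equivariance} of the $5$-coding $\ka$, by first moving the group elements around using Corollary~\ref{cor:key} together with elementary facts about the automorphism $\psi$, and then unfolding the definition of $\ka_1$ on patterns from Conventions~\ref{conv: overlap} and~\ref{conv: overlap2}. First I would use Corollary~\ref{cor:key} to write $\rho\,\sig^l(s)=\sig^l(\rho s)$, so the left-hand side of \eqref{eq: kappa equiv} becomes $\ka_1(\sig^l(\rho s))$. Since $\iota$ is central in $G=D\times\bbz/2\bbz$ with $\iota^2=e$, and since $\psi^l(\rho)=\iota^l\rho$ (an immediate induction from $\psi(\rho)=\iota\rho$ and $\psi(\iota)=\iota$), we have $\rho s=\iota^l\rho\,\iota^l s=\psi^l(\rho)(\iota^l s)$; applying Corollary~\ref{cor:key} once more gives $\sig^l(\rho s)=\psi^l(\rho)\,M$, where $M\defeq\sig^l(\iota^l s)$. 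Because $\iota^l s\in\Sig$ and every tile of $\Sig$ occurs in $\sigma^\infty(\splus{0})$ (a finite check, cf.\ Example~\ref{example: first iterations}), the pattern $M$ is a translate of a rectangular sub-pattern of $\sigma^\infty(\splus{0})$, so $\ka_1(M)$ makes sense by Convention~\ref{conv: overlap}. Thus \eqref{eq: kappa equiv} is equivalent to the identity $\ka_1(\psi^l(\rho)\,M)=\rho\,\ka_1(M)$.

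The second step identifies the twisted action of $\psi^l(\rho)$ on $\Mat_{2^l}(\Sig)$: by Definition~\ref{def: action on matrices} it equals $\tgeo_{\psi^l(\rho)}\tinn_{\psi^{2l}(\rho)}$; since $\pi$ kills $\iota$, $\tgeo_{\psi^l(\rho)}$ is rotation by $90\degree$ clockwise, while $\psi^2=\mathrm{id}_G$ gives $\psi^{2l}(\rho)=\rho$, so $\psi^l(\rho)\,M=\tgeo_\rho\tinn_\rho M$. Hence it remains to show that for any rectangular pattern $M=(m_{p,q})_{p,q=1}^{k}$ in $\sigma^\infty(\splus{0})$ (here $k=2^l$) one has
$$\ka_1\!\left(\tgeo_\rho\tinn_\rho M\right)=\rho\,\ka_1(M)\,,$$
where on the right $\rho$ is the geometric rotation by $90\degree$ clockwise of the $\ft$-pattern $\ka_1(M)$. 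To see this I would track block positions: by Convention~\ref{conv: overlap2} the $5\times5$ block $\ka(m_{p,q})$ sits in $\ka_1(M)$ in rows $[4p-3,4p+1]$ and columns $[4q-3,4q+1]$; rotating the assembled $(4k+1)\times(4k+1)$ pattern by $90\degree$ clockwise carries this block, geometrically rotated, into the block-position obtained by applying to $(p,q)$ the rotation isometry of $\{1,\dots,k\}^2$, and one checks that this is exactly the isometry governing the positions of the entries of $\tgeo_\rho M$. Finally $\rho\,\ka(m_{p,q})=\ka(\rho\,m_{p,q})$ by \eqref{eq:kappa-equivariance}, and $\rho\,m_{p,q}$ is the entry of $\tgeo_\rho\tinn_\rho M$ in that block-position (using \eqref{eq: commutation of tgeo and tinn}); comparing $\ka_1$ block by block then yields the claim, and running the reductions above backwards gives \eqref{eq: kappa equiv}.

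The step I expect to be the main obstacle is the last one: checking rigorously that geometric rotation of a pattern assembled from $5\times5$ blocks with an overlap of length $1$ equals the re-assembly of the individually rotated blocks in the rotated block-positions. Conceptually this is clear, but it requires careful index bookkeeping with the overlap convention — in particular verifying that a row shared by two vertically adjacent blocks becomes a column shared by the corresponding horizontally adjacent blocks after the rotation, so that the overlap structure is preserved and \eqref{eq:kappa-equivariance} may legitimately be applied blockwise. (Alternatively one can argue by induction on $l$: write $\sig^{l+1}(s)$ as a $2\times2$ block matrix, push $\rho$ through via \eqref{eq: rho on blocks}, apply the inductive hypothesis to the $2^l\times2^l$ blocks, and use the centrality of $\iota$ to absorb the extra $\iota$ that \eqref{eq: rho on blocks} produces; then the block-rotation bookkeeping only has to be done for $2\times2$ arrangements, but the same overlap compatibility is still needed.)
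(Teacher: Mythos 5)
Your proposal is correct, and its primary route is genuinely different from the paper's. The paper proves the lemma by induction on $l$: the base case is the definition of $\ka$ on $\rho$-orbits via \eqref{eq:kappa-equivariance}, and the inductive step writes $\sig^{l+1}(s)$ as a $2\times2$ block matrix, pushes $\rho$ through via \eqref{eq: rho on blocks} and Corollary~\ref{cor:key}, invokes Lemma~\ref{lem:consistent} to justify applying $\ka_1$ blockwise with overlaps, and absorbs the extra $\iota$ produced by \eqref{eq: rho on blocks} into the inductive hypothesis --- which is precisely where the $\iota^l$ in the statement accumulates. This is exactly the alternative you sketch in your final parenthetical. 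Your main route instead computes everything in one shot: the reduction $\rho\sig^l(s)=\sig^l(\rho s)=\psi^l(\rho)\sig^l(\iota^l s)$ using centrality of $\iota$ and $\psi^l(\rho)=\iota^l\rho$ is clean and correct, and the identification $\psi^l(\rho)M=\tgeo_\rho\tinn_\rho M$ (via $\psi^{2l}=\mathrm{id}$ and $\pi(\iota)=e$) exposes the real content of the lemma as the single untwisted identity $\ka_1(\tgeo_\rho\tinn_\rho M)=\rho\,\ka_1(M)$. The bookkeeping step you flag as the main obstacle does go through: block $(p,q)$ of $\ka_1(M)$ occupies rows $[4p-3,4p+1]$ and columns $[4q-3,4q+1]$ of the $(4k+1)\times(4k+1)$ assembly, and clockwise rotation $(i,j)\mapsto(j,4k+2-i)$ carries it, rotated, to block position $(q,k+1-p)$, which is exactly $\tgeo_\rho$ on indices; consistency of the rotated overlaps is automatic because the rotated assembly is literally the rotation of an already consistent pattern, after which \eqref{eq:kappa-equivariance} applies entrywise. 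What your route buys is a transparent explanation of where the $\iota^l$ comes from (the discrepancy between the twisted and untwisted actions) and an isolation of the one geometric fact needed (rotation commutes with overlap-$1$ gluing); what the paper's induction buys is that the geometric bookkeeping only ever has to be done for $2\times2$ block arrangements, where \eqref{eq: rho on blocks} has already packaged it. Both routes need Lemma~\ref{lem:consistent} (and the fact that all of $\Sig$, hence each $\sig^l(\iota^l s)$ and $\sig^l(\rho s)$, occurs in $\sig^\infty(\splus{0})$) for the blockwise $\ka_1$ to be well defined, and you correctly identify this.
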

\begin{proof}
The proof is by induction on $l$. For $l=0$, equation \eqref{eq: kappa equiv} reads as 
$\ka_1(\rho(s)) = \rho\ka_1(\rho s)$. This equality holds by the way $\ka_1$ was defined, which equals $\ka$ by Convention~\ref{conv: overlap} -- on representatives of $\rho$-orbits and extended by the action of $\rho$. 

Assuming that the equality \eqref{eq: kappa equiv}  holds for all $s\in \Sig$ for some $l\ge 0$, it will be proved that it holds for all $s\in \Sig$ for $l+1$. Given any tile $s\in \Sig$, there are tiles $s_1,s_2,s_3,s_4\in\Sig$ such that
$$
\sig(s) = \mat{s_1 &s_2\\ s_3&s_4}.
$$ 
Therefore,
$$\sig^{l+1}(s) = \mat{\sig^l(s_1) &\sig^l(s_2)\\ \sig^l(s_3)&\sig^l(s_4)}.$$
By \eqref{eq: rho on blocks} and Corollary~\ref{cor:key}, this gives that
$$\rho\sig^{l+1}(s) 
= 
\mat{
\rho\iota\sig^l(s_3) &\rho\iota \sig^l(s_1)\\ \rho\iota \sig^l(s_4)&\rho\iota\sig^l(s_2)
}
=
\mat{
\rho \sig^l(\iota s_3)&\rho \sig^l(\iota s_1)\\ \rho \sig^l(\iota s_4)&\rho\sig^l(\iota s_2)
}
.$$
Therefore,
\begin{equation}\label{eq: non-trivial}
\ka_1\left(\rho \sig^{l+1}(s)\right) =  \br{\begin{array}{cc}
\ka_1\left(\rho \sig^l(\iota s_3)\right)&\ka_1\left(\rho \sig^l(\iota s_1)\right)\\[2pt]
\ka_1\left(\rho \sig^l(\iota s_4)\right)&\ka_1\left(\rho \sig^l(\iota s_2)\right)
\end{array}
}.
\end{equation} 
The right hand side of \eqref{eq: non-trivial} needs to be interpreted correctly: 
each block $\ka_1\left(\rho\sig^l(\iota s_i)\right)$ is of size $2^{l+2}+1$ by convention, and they are arranged in the combined matrix such that any two adjacent matrices overlap in $1$ row or column according to their respective position in the block matrix. This way they make together a matrix of size $2^{l+3}+1$ as they should. 

The fact that the four blocks within these square brackets can be glued like that in a consistent way requires a justification. Indeed, Corollary~\ref{cor:key} implies that $\rho$ and $\sig^l$ commute. Therefore, each of the $4$ blocks of \eqref{eq: non-trivial} appear in $\sig^\infty(\splus{0})$, and thus this consistency property is satisfied by Lemma~\ref{lem:consistent}. 

The equality in \eqref{eq: non-trivial} is then immediate: the application of $\ka_1$ is by Convention~\ref{conv: overlap} obtained by applying $\ka$ to all the entries of a matrix and gluing the resulting $5\times 5$ matrices over $\bF_2$
with an overlap of length $1$. The right hand side of \eqref{eq: non-trivial} is the exact same procedure taking into account the square bracket notation in Convention~\ref{conv: overlap2}. 

To finish, the inductive hypothesis and Corollary~\ref{cor:key} are applied to \eqref{eq: non-trivial} in order to conclude that
\begin{align*}
\ka_1\left(\rho \sig^{l+1}(s)\right)& = 
  \br{
\begin{array}{cc}
\rho\ka_1\left(\sig^l\left(\iota^{l+1}s_3\right)\right)&\rho\ka_1\left(\sig^l\left(\iota^{l+1}s_1\right)\right)\\
\rho\ka_1\left( \sig^l\left(\iota^{l+1}s_4\right)\right)&\rho\ka_1\left(\sig^l\left(\iota^{l+1}s_2\right)\right)
\end{array}
}\\[3pt]
&=
\rho \br{
\begin{array}{cc}
\ka_1( \sig^l(\iota^{l+1}s_1))&\ka_1(( \sig^l(\iota^{l+1}s_2))\\
\ka_1( \sig^l((\iota^{l+1}s_3) )&\ka_1((\sig^l(\iota^{l+1}s_4))
\end{array}
}
\\[1.5pt]
&=
\rho \ka_1\left(\sig^{l+1}\left(\iota^{l+1}s\right)\right),
\end{align*}
where the last equality follows from \eqref{eq: iota on blocks}.

\end{proof}
\ignore{
\begin{remark}
Note that there is no analogue of Lemma~\ref{lem: kappa equivariance} without 
Convention~\ref{conv: overlap}. That is, even though it seems cleaner to define $\ka_1$ to be the 
$4$-coding introduced in the introduction, the only way we can see and exploit the symmetries is with adding 
this convention.
\end{remark}
}

Before we state the next lemma, we recall that $\mu$ and $\tau$ stand for the substitution  \eqref{eq:tm-substitution} which generates the Thue-Morse sequence and the coding \eqref{eq:tm-coding} which sends $\tilea$ to $0$ and $\tileb$ to $1$. 
\begin{lemma}\label{lem:kappa1sigma0_diagonal}
	For any $l\ge0$, the main diagonal of $\kappa_1\left(\sigma^l(\splus{0})\right)$ is a sequence of $0$s. The diagonal below it is 
	the sequence $\tau\left(\mu^{l+2}(\tilea)\right)$,  
	the diagonal above it is a sequence of $1$'s, and apart from that every entry above the main diagonal is $0$. 
	An analogous statement holds for $\sminus{0}$, where $\tilea$ is replaced with $\tileb$.
\end{lemma}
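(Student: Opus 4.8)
The plan is to pull everything back to the explicit $5\times5$ matrices $\ka(\splus{0}),\ka(\sminus{0}),\ka(\tilec{0}),\ka(\tilo)$ from \eqref{eq:kappa} together with the overlap bookkeeping of Conventions~\ref{conv: overlap} and~\ref{conv: overlap2}, using Lemma~\ref{lem:sigma0_diagonal} for the structure of $\sig^l(\splus{0})$ on and above its main diagonal. Write $\sig^l(\splus{0})=(s_{i,j})_{i,j=1}^{2^l}$; by Lemma~\ref{lem:sigma0_diagonal} we have $s_{i,i}\in\{\splus{0},\sminus{0}\}$, with $(s_{1,1},\dots,s_{2^l,2^l})$ equal to $\mu^l(\tilea)$ under the identification $\splus{0}\leftrightarrow\tilea$, $\sminus{0}\leftrightarrow\tileb$, while $s_{i,i+1}=\tilec{0}$ and $s_{i,j}=\tilo$ for $j>i+1$. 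Under Convention~\ref{conv: overlap} the pattern $\ka_1(\sig^l(\splus{0}))$ has size $2^{l+2}+1$, and by Convention~\ref{conv: overlap2} each of its entries lies inside the $\ka$-image of one of the tiles $s_{i,j}$ at the corresponding local position.

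First I would dispose of the entries on and above the main diagonal. Reading off \eqref{eq:kappa}, the main diagonals of $\ka(\splus{0})$ and $\ka(\sminus{0})$ are identically $0$, their superdiagonals are identically $1$, and all their entries strictly above the superdiagonal vanish; moreover $\ka(\tilec{0})$ has its only nonzero entries at local positions $(4,1)$ and $(5,2)$, and $\ka(\tilo)=0$. Since a block $\ka(\tilec{0})$ sitting at block-position $(t,t+1)$ contributes its entry $(4,1)$ to global position $(4t,4t+1)$ and its entry $(5,2)$ to $(4t+1,4t+2)$ — both on the superdiagonal of $\ka_1(\sig^l(\splus{0}))$, and agreeing (via the consistency Lemma~\ref{lem:consistent}) with the superdiagonal entries $\ka(s_{t,t})(4,5)$ and $\ka(s_{t+1,t+1})(1,2)$ of the neighbouring diagonal blocks — this already yields the asserted behaviour of $\ka_1(\sig^l(\splus{0}))$ on the main diagonal, on the diagonal above it, and strictly above that.

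The heart of the matter is the diagonal below the main diagonal, of length $2^{l+2}$, whose entries I would group into consecutive blocks of four indexed by $t=1,\dots,2^l$. Tracking the indices of Convention~\ref{conv: overlap2}, the $t$-th block of four consists of $\ka(s_{t,t})(2,1)$, $\ka(s_{t,t})(3,2)$, $\ka(s_{t,t})(4,3)$, and the ``seam'' entry at global position $(4t+1,4t)$, which for $t<2^l$ lies in the block $\ka(s_{t+1,t})$ at local position $(1,4)$ and for $t=2^l$ lies in $\ka(s_{2^l,2^l})$ at local position $(5,4)$. The key point — which spares us from ever having to identify the subdiagonal tiles $s_{t+1,t}$ of $\sig^l(\splus{0})$ — is that the vertical consistency relation of Lemma~\ref{lem:consistent}, applied to the size-$2$ square pattern with corners $s_{t,t},s_{t,t+1},s_{t+1,t},s_{t+1,t+1}$ (which occurs in $\sig^\infty(\splus{0})$), gives $\ka(s_{t+1,t})(1,4)=\ka(s_{t,t})(5,4)$. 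Hence in every case the $t$-th block of four is precisely the subdiagonal of $\ka(s_{t,t})$, and from \eqref{eq:kappa} the subdiagonal of $\ka(\splus{0})$ is $(0,1,1,0)=\tau(\tilea\tileb\tileb\tilea)=\tau(\mu^2(\tilea))$ while that of $\ka(\sminus{0})$ is $(1,0,0,1)=\tau(\tileb\tilea\tilea\tileb)=\tau(\mu^2(\tileb))$.

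Since $\tau\circ\mu^2$ carries concatenation of words to concatenation letter by letter, and $(s_{1,1},\dots,s_{2^l,2^l})=\mu^l(\tilea)$, the diagonal below the main diagonal of $\ka_1(\sig^l(\splus{0}))$ is $\tau(\mu^2(s_{1,1}))\cdots\tau(\mu^2(s_{2^l,2^l}))=\tau(\mu^2(\mu^l(\tilea)))=\tau(\mu^{l+2}(\tilea))$, as claimed. Finally, the statement for $\sminus{0}$ should follow from $\sig^l(\sminus{0})=\iota\,\sig^l(\splus{0})$ (Corollary~\ref{cor:key}, using $\sminus{0}=\iota\splus{0}$): by Definition~\ref{def: action on matrices} the action of $\iota$ on a $2^l\times2^l$ matrix is geometrically trivial and replaces every entry by its $\iota$-image, hence merely interchanges $\splus{0}\leftrightarrow\sminus{0}$ along the diagonal, turning $\mu^l(\tilea)$ into $\mu^l(\tileb)$ while leaving the $0$-diagonal, the $1$-superdiagonal, and the vanishing above it untouched. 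The main obstacle I anticipate is purely organisational: setting up the index bookkeeping of Convention~\ref{conv: overlap2} carefully enough that the ``block of four per $t$'' decomposition and the seam identification through Lemma~\ref{lem:consistent} are genuinely unambiguous, including at the edges $t=1$ and $t=2^l$.
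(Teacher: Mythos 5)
Your proposal is correct and follows essentially the same route as the paper's (much terser) proof: Lemma~\ref{lem:sigma0_diagonal} for the tile-level structure on and above the diagonal, direct inspection of the $5\times5$ matrices in \eqref{eq:kappa} (whose subdiagonals are $\tau(\mu^2(\tilea))$ and $\tau(\mu^2(\tileb))$), and Lemma~\ref{lem:consistent} to handle the overlap seams, with the $\sminus{0}$ case obtained by symmetry. The only difference is that you spell out the Convention~\ref{conv: overlap2} index bookkeeping that the paper leaves implicit; no gap.
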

\begin{proof}
	The first claim follows from Lemma \ref{lem:sigma0_diagonal} by recalling \eqref{eq:kappa} and noting that $\kappa_1(\splus{0})$ and $\kappa_1(\sminus{0})$ have only $0$'s on the main diagonal. 
	
	The second claim follows from Lemma \ref{lem:consistent}, the fact that $\kappa(\splus{0})$ 
	and $\kappa(\sminus{0})$ have $\tau(\mu^2(\tilea))$ and $\tau(\mu^2(\tileb))$, respectively, on the diagonal which lies just below the main diagonal, and from the fact that both $s\in\{\tilea,\tileb\}$ satisfy $\mu^2\left(\mu^\infty(s)\right)=\mu^\infty(s)$. 
	
	The third claim follows by inspecting the diagonals above the main diagonal of $\ka_1(\splus{0})$ and $\ka_1(\sminus{0})$ which contain only $1$'s.
	
	The fourth claim follows by inspecting $\kappa(\tilec{0})$ and $\kappa(\tilo)$. 
	The assertion about $\sminus{0}$ is proved in a similar fashion.
\end{proof}
\begin{lemma}\label{lem: kappa1 of r0}
For any $l\ge0$ the pattern $\ka_1(\sig^l\left(\tiler{a}\right))$ has the form
\begin{center}
	\scalebox{0.45}{
		\begin{tikzpicture}

			\draw (-9.25,-9.25) rectangle (9.25,9.25);
			\draw (-8.75,0.25)--(-0.25, 8.75);
			\draw (-8.75,-0.25)--(-0.25, -8.75);
			
			\draw (0.25,-8.75)--(8.75,-0.25);
			\draw (0.25,8.75)--( 8.75,0.25);
			\draw (-9.25,-0.25) rectangle (-8.75, 0.25);
			\draw (9.25,-0.25) rectangle (8.75, 0.25);
			\draw (-0.25,-9.25) rectangle ( 0.25,-8.75);
			\draw (0.25,9.25) rectangle ( -0.25,8.75);
			
			%
			
			\node at (-9,0) {\LARGE $0$};
			\node at (9,0) {\LARGE $0$};
			\node at (0,9) {\LARGE $0$};
			\node at (0,-9){\LARGE $0$};
			
			\foreach \i in {0,1,...,5}{
				\node at(-8.5 +\i/2,\i/2) {\LARGE $1$};
				\node at (-8.5 +\i/2 +1/2, \i/2) {\LARGE $0$};
			}
			\foreach \i in {0,1,...,5}{
				\node at(-8.5 +\i/2,-\i/2) {\LARGE $1$};
				\node at(-8.5 +\i/2+1/2,-\i/2) {\LARGE $0$};
			}
			
			\foreach \i in {0,1,...,5}{
				\node at(\i/2,-8.5 +\i/2) {\LARGE $1$};
				\node at(\i/2,-8.5 +\i/2 +1/2) {\LARGE $0$};
			}
			\foreach \i in {0,1,...,5}{
				\node at(-\i/2,-8.5 +\i/2) {\LARGE $1$};
				\node at(-\i/2,-8.5 +\i/2+1/2) {\LARGE $0$};
			}
			
			\foreach \i in {0,1,...,5}{
				\node at(8.5 -\i/2,\i/2) {\LARGE $1$};
				\node at(8.5 -\i/2-1/2,\i/2) {\LARGE $0$};
			}
			\foreach \i in {0,1,...,5}{
				\node at(8.5 -\i/2,-\i/2) {\LARGE $1$};
				\node at(8.5 -\i/2-1/2,-\i/2) {\LARGE $0$};
			}
			
			
			\foreach \i in {0,1,...,5}{
				\node at(\i/2,8.5 -\i/2) {\LARGE $1$};
				\node at(\i/2,8.5 -\i/2-1/2) {\LARGE $0$};
			}
			\foreach \i in {0,1,...,5}{
				\node at(-\i/2,8.5 -\i/2) {\LARGE $1$};
				\node at(-\i/2,8.5 -\i/2-1/2) {\LARGE $0$};
			}
			
			\node (0,0){\huge only $0$'s};
			\node at (-6,-6) {\Huge *};
			\node at (6,-6) {\Huge *};
			\node at (-6,6) {\Huge *};
			\node at (6,6) {\Huge *};
			
			\draw [decorate,decoration={brace,amplitude=8pt},xshift=0pt,yshift=0pt]
			(-10,-9.2) -- (-10,9.2) node [midway,left,xshift=0pt] {\huge \begin{tabular}{l} 
				length  \\ $2^{l+2}+1\;$ 
			\end{tabular}};
	\end{tikzpicture}}
\end{center}

\end{lemma}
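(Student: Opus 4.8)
Before the author's proof, here is how I would approach Lemma~\ref{lem: kappa1 of r0}.

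The strategy is to deduce the picture from Lemma~\ref{lem:kappa1sigma0_diagonal}, which pins down the ``upper triangular'' part of $\ka_1(\sig^{l}(\splus{0}))$ and $\ka_1(\sig^l(\sminus{0}))$ — the main diagonal is all $0$'s, the diagonal directly above it is all $1$'s, and everything strictly above that is $0$ — by unfolding the self-similar identity~\eqref{eq:sigmal8} one step. First I would settle the base case $l=0$ by direct inspection: $\ka_1(\sig^0(\tiler{a}))=\ka(\tiler{a})$ is the explicit $5\times 5=(2^{0+2}+1)\times(2^{0+2}+1)$ matrix in~\eqref{eq:kappa}, and one checks by hand that it has the asserted diamond shape.

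For $l\ge 1$ I would write $\sig^l(\tiler{a})=\sig^{l-1}(\sig(\tiler{a}))$, apply~\eqref{eq:sigmal8} and then Corollary~\ref{cor:key} to obtain
$$
\sig^l(\tiler{a})=\mat{\rho\,\sig^{l-1}(\sminus{0}) & \rho^2\sig^{l-1}(\splus{0})\\ \sig^{l-1}(\splus{0}) & \rho^3\sig^{l-1}(\sminus{0})}.
$$
Since $\tiler{a}$ occurs in $\sig^\infty(\splus{0})$ (say in $\sig^2(\splus{0})$), the whole pattern $\sig^l(\tiler{a})$ occurs in $\sig^\infty(\splus{0})$, so Lemma~\ref{lem:consistent} and Convention~\ref{conv: overlap} let us pass to $\ka_1$ blockwise in the sense of Convention~\ref{conv: overlap2}. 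Iterating Lemma~\ref{lem: kappa equivariance} on each of the four blocks — and using that $\iota$ is central in $G$, so the two copies of $\iota^{l-1}$ produced by $\rho^2$ cancel — gives
$$
\ka_1\!\left(\sig^l(\tiler{a})\right)=\br{\begin{array}{cc} \rho\,\ka_1\!\left(\sig^{l-1}(\iota^{l-1}\sminus{0})\right) & \rho^2\,\ka_1\!\left(\sig^{l-1}(\splus{0})\right)\\[2pt] \ka_1\!\left(\sig^{l-1}(\splus{0})\right) & \rho^3\,\ka_1\!\left(\sig^{l-1}(\iota^{l-1}\sminus{0})\right)\end{array}},
$$
each block a rotation of $\ka_1(\sig^{l-1}(u))$ with $u\in\{\splus{0},\sminus{0}\}$, all four of size $2^{l+1}+1$.

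Now I would feed in Lemma~\ref{lem:kappa1sigma0_diagonal}: on and above its main diagonal each $\ka_1(\sig^{l-1}(u))$ is all $0$'s except for the first superdiagonal, which is all $1$'s, while the strictly lower-triangular part is unconstrained. The big square $\ka_1(\sig^l(\tiler{a}))$ has size $2^{l+2}+1$, its centre is the common corner of the four blocks, and the four blocks meet the central diamond in their bottom-right, upper, lower and top-left triangles respectively (for the $\rho$-, identity-, $\rho^2$- and $\rho^3$-rotated block). A short computation with the map $R$ of~\eqref{eq: R rotation} shows that the four rotations carry the distinguished pair ``all-$0$ main diagonal / all-$1$ first superdiagonal'' exactly onto the four edges of the diamond and the four fence lines one step inside them; that everything lying strictly beyond the first superdiagonals, in all four blocks, covers the region two or more steps inside every edge and is all $0$; and that the unconstrained lower-triangular halves land in the four corner regions marked $*$. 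In particular the diamond edges and their four intersection points (the tips) are $0$. Consistency of the four blocks along the central row and column where they overlap is automatic from Lemma~\ref{lem:consistent} and Convention~\ref{conv: overlap2}. The main obstacle is precisely this last bookkeeping step — tracking, for each of the four rotations, which triangular half of the block enters the diamond and how the $0/1$ diagonal pair is transported to the correct edge and fence — together with treating small $l$ (e.g.\ $l=1$, and the $l=0$ base case), where the ``deep interior'' is tiny and the fences emanating from neighbouring tips already touch.
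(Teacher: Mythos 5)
Your proposal is correct and follows essentially the same route as the paper: decompose $\sig^l(\tiler{a})$ via $\sig(\tiler{a})$ and Corollary~\ref{cor:key} into four quarters that are ($\iota$-twisted) $\rho$-rotations of $\ka_1\bigl(\sig^{l-1}(\splus{0})\bigr)$, pass $\ka_1$ through blockwise using Lemma~\ref{lem:consistent} and Lemma~\ref{lem: kappa equivariance}, and then read off the diamond from Lemma~\ref{lem:kappa1sigma0_diagonal} together with the geometric action of $\rho$. The paper's proof is just a terser version of this quarter-by-quarter verification, so no further comparison is needed.
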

\begin{proof}
Writing  
$A = \sig^{l-1}(\splus{0})$, 
equation~\eqref{eq:magic} implies that  
$$\sig^l\left(\tiler{a}\right) = \mat{
\iota \rho A &\rho^2A\\ A &\iota \rho^3 A
}.$$
Applying $\ka_1$ and using the bracket notation of Lemma~\ref{lem: kappa equivariance} gives
$$\ka_1\left(\sig^l\left(\tiler{a}\right)\right) = 
\br{
\begin{array}{cc}
\ka_1(\iota \rho A) & \ka_1(\rho^2A)\\
\ka_1(A)&\ka_1(\iota \rho^3 A)
\end{array}
}.$$ 
The pattern that appears in the statement of the theorem is verified quarter by quarter. The statement for the bottom left quarter $\ka_1(A)$ is included in Lemma~\ref{lem:kappa1sigma0_diagonal}. 
Next, it is verified that the upper left quarter $\ka_1(\iota \rho A)$ has the desired pattern.
Lemma~\ref{lem: kappa equivariance} implies that
$$\ka_1(\iota \rho A) =\ka_1\left(\rho\iota \sig^{l-1}(\splus{0})\right) = \rho\ka_1\left(\sig^{l-1}\left(\iota^l\splus{0}\right)\right).$$
The claim follows from Lemma~\ref{lem:kappa1sigma0_diagonal} applied to $\splus{0}$ or $\sminus{0}$ according to the parity of $l$, and the definition of $\tgeo_\rho$ which rotates $90\degree$ clockwise. The argument for the other two quarters is virtually the same.
\end{proof}	
	
\subsection{Proof of Theorem~\ref{thm:main2}}	
Lemma \ref{lem:kappa1sigma0_diagonal} and the converse part of Theorem \ref{thm:Lunnon-for-flats} reduce Theorem \ref{thm:main2} to the verification of the frame relations in the tiling $\ka_1\left(\sig^\infty(\splus{0})\right)$. 

\begin{theorem}\label{thm:main2-reduced}
Let $\set{\frakg_{m,n}}_{m,n\ge 1} := \ka_1\left(\sig^\infty(\splus{0})\right)$ and extend it to $m=0$ by setting $\frakg_{0,1}=1$ and $\frakg_{0,n}=0$ for every $n>1$. The tiling $\frakg\defeq\left\{\frakg_{m,n}\right\}_{m\geq0, n\geq1}$ is a diagonally-aligned number wall.
\end{theorem}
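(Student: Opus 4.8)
The plan is to verify directly that $\frakg$ satisfies every diagonally-aligned frame relation of Definition~\ref{def:flat-frame-relations} in its simplified $\ft$-form: the parity relation, the ex relation $E=AD+BC$, the inner frame relation~\eqref{eq:inner-frame-f2}, and the outer frame relation~\eqref{eq:outer-frame-f2}. The parity relations come for free: inspecting the six matrices in~\eqref{eq:kappa}, using the $\rho$-equivariance~\eqref{eq:kappa-equivariance}, and glancing at $\ka(\sminus{0})$ shows that every $\ka(s)$, $s\in\Sig$, is supported on the cells $(i,j)$ with $i\not\equiv j\pmod2$. Assembling the blocks $\ka_1(s)$ with an overlap of length $1$ (Convention~\ref{conv: overlap}) respects this parity, so $\frakg_{m,n}=0$ whenever $m\equiv n\pmod2$, compatibly with the extension at $m=0$; in particular no two horizontally or vertically adjacent entries of $\frakg$ are both nonzero, which is exactly the parity relation.

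For the remaining relations I would separate bounded frame patterns from unbounded ones. Every ex pattern sits inside a $3\times3$ sub-rectangle of $\frakg$, hence inside the $\ka_1$-image of a $2\times2$ block of tiles of $\sig^\infty(\splus{0})$; more generally an inner or outer frame pattern whose window has size at most $N$ lies inside the $\ka_1$-image of a block of tiles of size bounded in terms of $N$. As in the proof of Lemma~\ref{lem:consistent} (and using \cite[Lemma~4.14]{ALN}), for each fixed bounded shape the sub-patterns of $\sig^\infty(\splus{0})$ of that shape form a finite set, already exhausted by $\sig^L(\splus{0})$ for a suitable finite $L$. Hence the ex relations, and the inner/outer frame relations for all windows whose size is below any prescribed bound, reduce to a finite verification carried out by computer inside $\ka_1(\sig^L(\splus{0}))$.

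It then remains to treat the windows of unbounded size. Using Lemmas~\ref{lem:sigma0_diagonal}, \ref{lem:sigmal0}, \ref{lem:sigmal8} and~\ref{lem:kappa1sigma0_diagonal} --- which record the main diagonal, the first columns, the last rows, and the diagonal $\tilec{i}$-fences of the iterates --- I would show that the only unbounded all-zero regions of $\frakg$ are: (i) the staircase region $\{n\ge m+2\}$ strictly above the $1$-diagonal $\frakg_{m,m+1}=1$, and (ii) the $\ka_1$-images of the nested $\tilo$-diamonds carried by the sub-patterns $\sig^l(\tiler{a})$ and $\sig^l(\tiler{b})$ occurring inside $\sig^\infty(\splus{0})$. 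For (i), the boundary data $\frakg_{0,1}=1$, $\frakg_{0,n}=0$ $(n>1)$, $\frakg_{n-1,n}=1$, $\frakg_{m,n}=0$ $(n>m+1)$ --- all contained in Lemma~\ref{lem:kappa1sigma0_diagonal} --- make it an increasing union of square windows whose inner frame patterns have at most one nonzero corner, with the opposite corner vanishing, so~\eqref{eq:inner-frame-f2} holds and no outer frame pattern occurs there. For (ii), Lemma~\ref{lem: kappa1 of r0} exhibits each such window together with a width-one collar explicitly; from that picture one reads off that every associated inner frame pattern has all four corners equal (all $0$ or all $1$), giving~\eqref{eq:inner-frame-f2}, and that the surrounding $\tilec{i}$-fence is $\rho$-symmetric --- which is precisely the $\ka_1$-equivariance of Lemma~\ref{lem: kappa equivariance}, the $\iota$-twist for odd $l$ being harmless because $\tilo$ and the $\tilec{i}$ are $\iota$-fixed --- so the outer entries satisfy $E=H$ and $F=G$, whence~\eqref{eq:outer-frame-f2}.

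I expect the main obstacle to be part~(ii): making the list of unbounded zero-regions provably exhaustive --- ruling out any large all-zero rectangle not of the two stated forms, which is exactly where the fine diagonal/row/column bookkeeping of the structural lemmas is needed --- and then aligning the width-one collar supplied by Lemma~\ref{lem: kappa1 of r0} with the precise shapes of the inner and outer frame patterns of Definition~\ref{def:flat-frame-relations}. Everything else should be either immediate (the parity relations) or a bounded, mechanical computation (the ex relations and the windows of small size).
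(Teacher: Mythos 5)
Your treatment of the parity relations and of the ex relations coincides with the paper's: both are reduced to the checkerboard support of the matrices in \eqref{eq:kappa} and to a finite computer check on the (finitely many) $2\times2$ tile patterns occurring in $\sig^\infty(\splus{0})$, which are already exhausted by $\sig^5(\splus{0})$. The divergence, and the problem, is in how you handle the inner and outer frame relations. You propose a global dichotomy: windows below some size threshold are checked finitely, and all larger zero regions are claimed to be either the staircase above the diagonal or the central $\tilo$-diamonds of occurrences of $\sig^l(\tiler{a})$ and $\sig^l(\tiler{b})$. That classification is the entire content of the theorem in this range, and you do not prove it --- you flag it yourself as ``the main obstacle''. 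The lemmas you cite (Lemmas~\ref{lem:sigma0_diagonal}, \ref{lem:sigmal0}, \ref{lem:sigmal8}, \ref{lem:kappa1sigma0_diagonal}, \ref{lem: kappa1 of r0}) describe only the main diagonal, the boundary rows and columns of the iterates, and the interior of $\sig^l(\tiler{a})$; none of them rules out a large all-zero rectangle straddling the boundary between two adjacent substitution blocks, which is exactly where such a rectangle could a priori hide. Without that exclusion your ``finite verification for all windows below any prescribed bound'' is not a single finite check, and the argument does not close.

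The paper avoids needing the global classification by running an induction on the substitution level $l$ (Claim~\ref{claim: induction}): every frame pattern in $\ka_1(\sig^{l+1}(s))$ either lies in one of the four quarters (inductive hypothesis) or meets a middle horizontal or vertical strip of height $9$. The decisive observation is that these middle strips are periodic with explicit period (Lemmas~\ref{lem:sigmal0} and \ref{lem:sigmal8}) and contain no run of more than $5$ consecutive zeros; hence any frame pattern crossing a quarter boundary has window size at most $5$, is entirely contained in the periodic strip, and its relations reduce to a finite check on $\ka_1(\sig^4(\splus{0}))$. The large diamonds are then confined to Case~3 ($s=\tiler{a},\tiler{b}$), where Lemma~\ref{lem: kappa1 of r0} and the four-quarter symmetry \eqref{eq: 4 quarters} give $E=H$, $F=G$ exactly as you describe. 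So your analysis of the diamonds and of the staircase is sound and matches the paper's, but to make your route rigorous you would have to prove the ``no large zero rectangle across a block boundary'' statement --- and the natural proof of that is precisely the paper's induction. As written, the proposal has a genuine gap at its central step.
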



\ignore{
extending it to large square patterns over $\Sigma$. For any $l\geq1$ and any square pattern $M$ of size $2^l$, let $A,B,C,D$ be square patterns of size $2^{l-1}$ for which
$$M=\begin{array}{cc}
	A&B\\
	C&D
\end{array}.$$
Define recursively
\begin{align*}
	\rho M&\defeq\begin{array}{cc}
		\rho \eta \iota C &\rho \eta \iota A \\
		\rho \eta \iota D &\rho \eta \iota B
	\end{array},\quad &\eta M&\defeq\begin{array}{cc}
		\iota D&\iota B\\
		\iota C&\iota A
	\end{array},\quad &\iota M&\defeq\begin{array}{cc}
	\eta D&\eta B\\
	\eta C&\eta A
	\end{array}.
\end{align*}
where the case $l=1$ is already defined in  \eqref{eq:G-of-matrices}. With this definition it easily follows that \eqref{eq:symbols-equivariance} extends to large square patterns.

\begin{theorem}\label{thm:key}
	For every $l\geq0$, every square pattern $M$ of size $2^l$ and $g\in G$ satisfy
	\begin{equation*}
		\sigma(g M) = g \sigma(M)\,.
	\end{equation*}
\end{theorem}
\begin{proof}
	It is enough to verify this for a set of generators of $G$. This can be verified by induction for $\rho$, $\eta$ and $\iota$ separately.
\end{proof}
}




\begin{proof}

The proof is by verification of the frame relations of Definition~\ref{def:flat-frame-relations}. It is useful to note that for every finite subset of the set $\{(m,n)\sep m\geq0,n\geq1\}$ the frame relations can be checked in a simple manner -- since the first part of Theorem \ref{thm:Lunnon-for-flats} implies that $\frakf(\alpha)$ satisfies the frame relations, it is enough to verify that $\frakg$ agrees with $\frakf(\alpha)$ on that set. As mentioned in Section \ref{sec:structure}, this was verified in particular for the subset $\{(m,n)\sep 0\leq m\leq129,1\leq n\leq130\}$ (see Figure \ref{fig:thm_flat}). This fact will be used twice during this proof. 

\quad\\
\noindent\emph{Parity relations} \eqref{item: FR inflation}. Examining
the matrices in the definition of $\ka$ 
in \eqref{eq:kappa} and its equivariance with respect to $\rho$ in \eqref{eq:kappa-equivariance}, it is evident that for every $s\in\Sigma$, its coding $\kappa(s)$ satisfies $\ka(s)(m,n)=0$ for every $1\leq m,n\leq 5$ for which $m\equiv n(2)$. Therefore, $\frakg_{m,n}=0$ for every $m,n\geq 1$ for which $m\equiv n(2)$. By the definition of the $0$\textsuperscript{th} row, this property is easily checked to extend to the row $m=0$. Therefore, the tiling $\frakg$ satisfies the parity relations.

\begin{center}
\scalebox{0.8}{
\begin{tikzpicture}
    \draw[thick, ->] (-1, 0) -- (4, 0) node[right] {$n$};
    
    \draw[thick, ->] (0, 1) -- (0, -4) node[below] {$m$};
    
    \foreach \i in {0.5,1,1.5,...,3} \node at (\i, -\i) {0}; 
    
    \foreach \i in {1,1.5,...,3}\node at (\i, 0) {0}; 
    

    \foreach \i in{0.5,1,1.5,...,3} \node at (\i, -\i+0.5) {1}; 
    
    \foreach \i in{1,1.5,...,3} \node at (\i, -\i+1) {0}; 
    
    \node at (0.5,-1) {0};
    \node at (1,-1.5){1};
    \node at (1.5, -2){1};
    \node at (2,-2.5){0};
    \node at (2.5,-3){1};
    \draw[->] (-1,-2)--(1.3,-2);
    \node[left] at (-1,-2) {Thue-Morse};
    \node at (3.25, -0.75) {only $0$'s...};
    \node at (1, -3.25) {\small {\begin{tabular}{c} interesting \\ part \end{tabular}}};
\end{tikzpicture}}
\end{center}

\noindent\emph{Ex relations} \eqref{item: FR ex}. 
The ex patterns whose underlying shapes are not contained in $\bbn^2$ are clearly satisfied. The rest of them are contained in the image under $\ka_1$ of a $2\times 2$ matrix over $\Sig$. As noted in the proof of Lemma~\ref{lem:consistent}, all the square patterns of size $2$ that appear 
in $\sig^\infty(\splus{0})$, must already appear in $\sig^5(\splus{0})$. Checking that $\ka_1(\sig^5(\splus{0}))$ satisfies the ex relation is a finite check that is verified with a computer program. In fact, by the first paragraph of the current proof, this holds since $\frakg_{m,n}=\frakf_{m,n}(\alpha)$ whenever $0\leq m\leq129$ and $1\leq n \leq 130$, and $\frakf(\alpha)$ does satisfy the ex relations. 


\quad\\
\noindent \emph{Inner \eqref{item:flat-frame-relations-2} and outer \eqref{item: FR outer} frame relations}.
\ignore{Let $P$ be an inner frame pattern in $Q'$. If it is not contained in $Q$, then its top corners are 0's and the corresponding frame relation holds. Moreover, such a pattern is not part of an outer frame pattern. 
It follows that in order to verify that $Q'$ satisfies the inner and outer frame relations, it is enough to restrict attention to frame patterns contained in $Q$.
}
Every inner frame pattern whose shape is not contained in $\bbn^2$, has top corners that are $0$'s and, thus, the corresponding frame relation holds. Moreover, such a pattern is not contained in any outer frame pattern. It follows that in order to verify that $\frakg$ satisfies the inner and outer frame relations, it is enough to restrict attention to frame patterns that are contained in $\bbn^2$.

The proof proceeds by induction on $l$ in order to show the following:
\begin{claim}\label{claim: induction}
For any $l\ge0$, the inner and outer frame relations hold in
$\ka_1\left(\sig^l(s)\right)$ for any $s\in \Sig$. 
\end{claim}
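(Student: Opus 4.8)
The plan is to prove Claim~\ref{claim: induction} by induction on $l$. For the base of the induction I would handle all $l$ up to a small bound --- the value $5$ that already governs the consistency check in Lemma~\ref{lem:consistent} is convenient --- where each $\ka_1(\sig^l(s))$, $s\in\Sig$, has bounded size, so that the finitely many inner and outer frame patterns it contains are verified by computer to satisfy~\eqref{eq:inner-frame-f2} and~\eqref{eq:outer-frame-f2}. (For $s=\splus{0}$ this is also immediate from the observation recalled at the start of the proof of Theorem~\ref{thm:main2-reduced}, that $\frakg$ agrees with the diagonally-aligned number wall $\frakf(\alpha)$ on $\set{0\le m\le 129,\ 1\le n\le 130}$, which satisfies every frame relation.)

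For the inductive step, assume the claim for some $l$ at least the base-case bound, fix $s\in\Sig$, and write $\sig(s)=\smallmat{s_1&s_2\\ s_3&s_4}$. By Conventions~\ref{conv: overlap} and~\ref{conv: overlap2},
$$\ka_1\left(\sig^{l+1}(s)\right)=\br{\begin{array}{cc}\ka_1(\sig^l(s_1))&\ka_1(\sig^l(s_2))\\ \ka_1(\sig^l(s_3))&\ka_1(\sig^l(s_4))\end{array}}$$
is the overlap-gluing of four blocks of size $2^{l+2}+1$ sharing a central row and a central column. Any inner or outer frame pattern contained in one of the four blocks satisfies its frame relation by the inductive hypothesis, so the content of the step is to treat frame patterns whose underlying window of $0$'s crosses the central seam.

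For that I would combine the overlap-consistency Lemma~\ref{lem:consistent} with the explicit block descriptions: up to an isometry of the square coming from Lemma~\ref{lem: kappa equivariance}, each $\ka_1(\sig^l(s_i))$ is all $0$'s ($s_i=\tilo$), supported on two short diagonals near a corner ($s_i=\tilec{j}$), upper-triangular with a Thue-Morse sub-diagonal and a constant $1$ super-diagonal ($s_i\in\set{\splus{j},\sminus{j}}$, by Lemma~\ref{lem:kappa1sigma0_diagonal}), or a large diamond of $0$'s bordered by alternating $1,0$ stripes with four ``interesting'' triangular corners ($s_i$ equal to $\tiler{a}$ or $\tiler{b}$, by Lemma~\ref{lem: kappa1 of r0}), while the first and last row and column of each block are periodic with a short explicit period (Lemmas~\ref{lem:sigmal0} and~\ref{lem:sigmal8}). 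Because each of the four blocks in a $2\times2$ arrangement meets the outer boundary of $\ka_1(\sig^{l+1}(s))$ along two of its sides, any \emph{large} window that crosses the central seam is of one of two harmless kinds: it lies in the all-$0$ triangular region above the main diagonal, where its two top corners are $0$, so~\eqref{eq:inner-frame-f2} is immediate and, since $ABCD=0$, there is no outer frame pattern to check; or it is the central diamond of a $\tiler{a}$- or $\tiler{b}$-block, which touches the outer boundary of $\ka_1(\sig^{l+1}(s))$ at two of its four extreme points, so the associated inner frame pattern is not contained in $\bbn^2$ and the relation holds vacuously. Every remaining seam-crossing window has bounded size, hence is contained in the $\ka_1$-image of one of finitely many bounded patches of tiles occurring in $\sig^\infty(\splus{0})$, and the frame relations for the windows inside these patches --- together with the outer entries they may involve --- are checked by computer. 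This exhausts the frame patterns crossing the central seam and completes the inductive step.

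I expect the real obstacle to be establishing this dichotomy ``large and harmless versus small and finitely checkable'' for seam-crossing windows. One must verify that no other large $0$-region straddles the central seam --- in particular that the central diamond of a $\tiler{a}$- or $\tiler{b}$-block never fuses with the $0$'s of the block on the other side of the edge containing the midpoint where the diamond touches, which relies on that adjacent edge being the non-$0$ ``interesting'' edge of a $\splus{j}$- or $\sminus{j}$-block --- and that the four ``interesting'' triangular corners of Lemma~\ref{lem: kappa1 of r0} never touch the frame of a seam-crossing window beyond what the inductive hypothesis or the finite check already cover. Making Lemmas~\ref{lem:sigmal0}--\ref{lem: kappa1 of r0} precise enough to describe the edges, and a strip near them, for every admissible $\smallmat{s_1&s_2\\ s_3&s_4}$ is exactly what reduces this to a finite verification.
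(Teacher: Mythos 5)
Your overall architecture is the same as the paper's: induct on $l$, write $\ka_1\left(\sig^{l+1}(s)\right)$ as four overlapping quarters, dispose of frame patterns inside a quarter by the inductive hypothesis, and reduce the seam-crossing ones to structural lemmas plus finite checks. The gap is in your dichotomy for \emph{large} seam-crossing $0$-regions, specifically the central diamond. You argue that because the diamond touches the outer boundary of the square, ``the associated inner frame pattern is not contained in $\bbn^2$ and the relation holds vacuously.'' This confuses the window with the frame patterns it generates. An inner frame pattern is \emph{any} axes-aligned rectangle all of whose non-corner entries vanish; a diamond of $0$'s of radius $r$ centred at $c$ admits $r$ distinct maximal such rectangles, namely $[c_1-a,c_1+a]\times[c_2-b,c_2+b]$ for each splitting $a+b=r+1$ with $a,b\ge1$, each with all four corners on the diamond's border. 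For the central diamond of $\ka_1\left(\sig^{l+1}\left(\tiler{a}\right)\right)$ (whose vertices sit exactly at the four edge midpoints) every one of these rectangles is contained in the square, has $ABCD\ne0$ whenever its corners are at live parity, and so is a genuinely non-vacuous inner frame pattern; it moreover extends to an outer frame pattern that is also contained in the square. Your argument says nothing about these. The paper verifies the inner relation by observing (Lemma~\ref{lem: kappa1 of r0}) that the live-parity border of the diamond is a diagonally-aligned square of $1$'s, so $A=B=C=D=1$ and \eqref{eq:inner-frame-f2} holds; and it verifies the outer relation by using the $\rho^2$-symmetry of the four quarters in \eqref{eq: 4 quarters} to get $E=H$ and $F=G$, whence \eqref{eq:outer-frame-f2}. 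Both steps are essential and absent from your proposal. (The same issue arises, one level down, for the diamond of the $\tiler{a}$-quarter of $\sig^{l+1}(\splus{0})$: its inscribed inner frame patterns are handled by induction, but the outer frame patterns extending the extreme ones poke across the seam and still need the middle-strip analysis.)

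A secondary weakness: for the all-$0$ region above the main diagonal you assert that ``its two top corners are $0$.'' An inscribed rectangle there can have a corner on the superdiagonal of $1$'s (top-left and bottom-right corners can both lie on it), so this needs an argument; the paper instead bounds the width and height of every seam-crossing frame pattern with nonzero corners by $5$, using the periodicity of the middle horizontal and vertical strips (Lemmas~\ref{lem:sigmal0} and~\ref{lem:sigmal8}) and the explicit check that their codings contain no run of more than $5$ consecutive $0$'s, after which everything lives in a periodic strip and reduces to a finite verification. Your instinct that the crux is ``large and harmless versus small and finitely checkable'' is right, and your finite-check reduction for the small windows is sound, but the ``harmless'' half of the dichotomy is where the real content lies, and as stated it is false for the central diamond.
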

The base of the induction is verified by examining $\ka(s)$ for $s\in \Sig$ and is left to the reader. 
In the inductive step, it is assumed that Claim \ref{claim: induction} holds for $l$ and proved that it holds for $l+1$. The analysis splits into cases according to $s\in\Sigma$. 

\quad\\
\noindent \textbf{Case 1}. For $s = \tilo$ or $s\in G\tilec{0}$, then it is immediate that all the frame patterns in $\ka_1\left(\sig^{l+1}(s)\right)$ satisfy the corresponding frame relations, because for any inner frame pattern there, at least three of its corners are $0$'s. This also implies that there are no outer frame patterns in $\ka_1\left(\sig^{l+1}(s)\right)$. 
\quad\\

\ignore{
that for any $s\in \Sig$, $\ka_1(\sig^l(s))$ is thought of as a 
pattern of size $(2^{l}+1)\times(2^{l}+1)$ and that the 1-overlapping property allows us to 
obtain $\ka_1(\sig^{l+1}(s))$ by placing the 4 quarters of it with one row or column overlap. 
}
\ignore{
Denote 
\begin{align*}
	\sig(s) &= \mat{a&b\\ c&d},  
	&&\mbox{ and }
	&\ka_1\left(\sig^{l+1}(s)\right)
	&= 
	{\br{ \begin{array}{cc} \ka_1\left(\sig^l(a)\right)&\ka_1\left(\sig^l(b)\right)\\ \ka_1\left(\sig^l(c)\right)&\ka_1\left(\sig^l(d)\right)\end{array}}}.
\end{align*}
}

\quad\\
\noindent\textbf{Case 2}.	
For $s=\splus{0}$, it is convenient to introduce some terminology. Recall Convention~\ref{conv: overlap} and the square bracket notation that is introduced in Convention~\ref{conv: overlap2}. Consider the equality
$$
\kappa_1\left(\sigma^{l+1}(\splus{0})\right) 
= 
\br{
\begin{array}{cc}
	\ka_1\left(\sigma^{l}(\splus{0})\right) &\ka_1\left(\sigma^{l}(\tilec{0})\right) \\
	\ka_1\left(\sigma^{l}\left(\tiler{a}\right)\right) &\ka_1\left(\sigma^{l}(\sminus{0})\right)
\end{array}}.
$$
There exists a rectangular pattern of height $9$ and width $2^{l+3}+1$ which arises from the overlapping 
images under $\ka_1$ of the last and first rows of the top and bottom halves of $\sig^{l+1}(\splus{0})$, respectively. 
This rectangular pattern will be referred to as the \emph{middle horizontal strip}. A similar terminology is used for the \emph{middle vertical strip}.

Let $P$ be an inner frame pattern contained in $\kappa_1\left(\sigma^{l+1}(\splus{0})\right)$, and let $A,B,C,D$ be its corners, using the notation of Definition~\ref{def:flat-frame-relations}\eqref{item:flat-frame-relations-2}. If $P$ is contained in one of the quarters, the inductive hypothesis applies. Otherwise, 
either $A$ and $B$ are in the upper half and $C$ and $D$ are in the lower half, or $A$ and $C$ are in the right half and $B$ and $D$ are in the left one. Assume, for example, the former. By Lemmas \ref{lem:sigmal0} and \ref{lem:sigmal8}, the middle horizontal strip is periodic with a prescribed period given by the lemmas. If the width of $P$ is strictly greater than $5$, then the middle horizontal strip must contain a rectangular pattern of zeros of width strictly greater than $5$. 
However, it does not. 

Indeed, the middle horizontal strip is the image under $\kappa_1$ of one of two periodic strips which are composed of rectangular patterns of height $2$ and width $3$: either 
\begin{equation}\label{eq:first-four-1}
\begin{array}{cccc}
		\splus{0} &\sminus{3}&\tiler{b}& \cdots\\ \tiler{a}&\splus{1}&\sminus{2}&\cdots
\end{array}
\end{equation}
for $l$ even, or
\begin{equation}\label{eq:first-four-2}
	\begin{array}{cccc}
		\tiler{a}&\sminus{0}&\splus{3}&\cdots\\
		\sminus{1}&\splus{2} &\tiled{b}&\cdots
	\end{array}
\end{equation}
for $l$ odd. Therefore, it is enough to check that there are no more than $5$ consecutive $0$'s in 
$$
\br{
	\begin{array}{cccc}
		\kappa_1(\splus{0}) &\kappa_1(\sminus{3})&\kappa_1\left(\tiler{b}\right)&\kappa_1(\splus{0}) \\ \kappa_1(\tiled{a})&\kappa_1(\splus{1})&\kappa_1(\sminus{2})&\kappa_1(\tiled{a})
\end{array}}
$$
and
$$
\br{
	\begin{array}{cccc}
		\kappa_1\left(\tiler{a}\right)&\kappa_1(\sminus{0})&\kappa_1(\splus{3})&\kappa_1\left(\tiler{a}\right)\\
		\kappa_1(\sminus{1})&\kappa_1(\splus{2}) &\kappa_1\left(\tiler{b}\right)&\kappa_1(\sminus{1})
\end{array}}.
$$
The patterns which make up the first $4$ columns of \eqref{eq:first-four-1} and \eqref{eq:first-four-2} already appear in the first $4$ column in rows $4\textrm{--}5$ and $8\textrm{--}9$, respectively, as can be verified by looking at $\sig^3(\splus{0})$ in Example \ref{example: first iterations}. Therefore, it is enough to look at Figure~\ref{fig:thm_flat} and verify that the images under $\kappa_1$ of these pattern do not contain strictly more than $5$ consecutive $0$'s in a row. These images appear in columns $1\textrm{--}17$ and rows $13\textrm{--}21$ and $29\textrm{--}37$, respectively.
 
The height of $P$ must also be smaller than or equal to $5$. Otherwise, the ex relations would imply that the middle horizontal strip contains a diagonally-aligned square of $0$'s of size strictly greater than $5$. So, in particular, 
a rectangle of $0$'s of width strictly greater than $5$,
which was just ruled out. 

\ignore{The reader can use the following depiction of the horizontal middle strip in order to verify these statements.
\begin{center}
\begin{tikzpicture}
    \foreach \x in {0,1,...,5} \node at (\x, 0.5) {0};
    \foreach \x in {0.5,1.5,...,5.5} \node at (\x, 0.5) {*};
    \foreach \x in {0,1,...,5} \node at (\x, 1) {*};
    \foreach \x in {0.5,1.5,...,5.5} \node at (\x, 1) {0};
    \foreach \x in {0,1,...,5} \node at (\x, 1.5) {0};
    \foreach \x in {0.5,1.5,...,5.5} \node at (\x, 1.5) {*};
    \foreach \x in {0,1,...,5} \node at (\x, 2) {*};
    \foreach \x in {0.5,1.5,...,5.5} \node at (\x, 2) {0};
    %
    \foreach \x in {0,1,...,5} \node at (\x, 0) {*};
    \foreach \x in {0.5,1.5,...,5.5} \node at (\x, 0) {0};
    \foreach \x in {0,1,...,5} \node at (\x, -0.5) {0};
    \foreach \x in {0.5,1.5,...,5.5} \node at (\x, -0.5) {*};
    \foreach \x in {0,1,...,5} \node at (\x,- 1) {*};
    \foreach \x in {0.5,1.5,...,5.5} \node at (\x, -1) {0};
    \foreach \x in {0,1,...,5} \node at (\x, -1.5) {0};
    \foreach \x in {0.5,1.5,...,5.5} \node at (\x, -1.5) {*};
    \foreach \x in {0,1,...,5} \node at (\x,- 2) {*};
    \foreach \x in {0.5,1.5,...,5.5} \node at (\x, -2) {0};

    \draw[thick] (-0.2, -0.2) rectangle (5.75, 0.25);
    
    \node at (7, 0) {\small{overlap row}};
     \node at (7, 1) {\small{middle strip}};

\end{tikzpicture}
\end{center}
}
	
A similar analysis applies to inner frame patterns which involve the middle vertical strip. It is enough to verify that the patterns
$$
\br{
	\begin{array}{cc}
		\kappa_1\left(\tiler{a}\right)&\kappa_1(\sminus{0})\\
		\kappa_1(\splus{2})&\kappa_1\left(\tiler{b}\right)\\
		\kappa_1(\sminus{3})&\kappa_1(\splus{1})\\  \kappa_1\left(\tiler{a}\right)&\kappa_1(\sminus{0})
\end{array}}
$$
and 
$$
\br{
	\begin{array}{cc}
		\kappa_1(\splus{2})&\kappa_1(\sminus{0})\\
		\kappa_1(\sminus{3})&\kappa_1\left(\tiler{b}\right)\\  
		\kappa_1\left(\tiler{a}\right)&\kappa_1(\splus{1})\\
		\kappa_1(\splus{2})&\kappa_1(\sminus{0})
\end{array}}
$$
do not contain strictly more than $5$ consecutive $0$'s in a column. The relevant patterns in $\sig^\infty(\splus{0})$ occur already in rows $5\textrm{--}8$ and columns $4\textrm{--}5$, and in rows $9\textrm{--}12$ and columns $8\textrm{--}9$, respectively, as can be verified by looking at $\sig^4(\splus{0})$ in Example \ref{example: first iterations}. Therefore, it is enough to look at Figure~\ref{fig:thm_flat} and check their images under $\kappa_1$, which occur at rows $17\textrm{--}33$ and columns $13\textrm{--}21$, and rows $33\textrm{--}51$ and columns $29\textrm{--}37$, respectively.

The conclusion is that every inner frame pattern which is not contained in one of the quarters of $\sig^{l+1}(\splus{0})$ must have height and width smaller than or equal to $5$. Since it is assumed that it is not contained in one of the quarters, this implies that it is fully contained in the middle horizontal strip or the middle vertical strip. As mentioned above, these strips are periodic, so verifying that such a pattern must satisfy the inner frame relation boils down to a finite check. Recall that by \eqref{eq:inner-frame-f2}, over $\ft$ the inner frame relations are equivalent to the property that if any three out of the corners $A,B,C,D$ are $1$'s, then the fourth corner must be $1$ as well. Therefore, in order to verify the inner frame relation it is enough to verify that the $0$ tiles inside these middle strips occur in diagonally-aligned squares (which may be rectangular patterns contained in such squares in case they lie near the boundary of the strip). Therefore, this finite check can be carried out by seeing that the $0$'s in $\kappa_1\left(\sigma^4(\splus{0})\right)$ are arranged in diagonally-aligned squares. Alternatively, this follows from the fact that $\kappa_1\left(\sigma^4(\splus{0})\right)$ is contained in $\frakf(\alpha)$ as was discussed in the beginning of the current proof, and hence satisfies all the frame relations.

Assume now that $P'$ is an outer frame pattern extending an inner frame pattern $P$ with $ABCD\ne 0$, referring to the notation of Definition~\ref{def:flat-frame-relations}\eqref{item: FR outer}. If $P'$ is contained in one of the quarters, the inductive hypothesis applies and the corresponding outer frame relation holds. If $P'$ meets two quarters, then a similar analysis to the above shows that in fact it must be contained the corresponding middle strip. Again, since these middle strips are periodic with explicit periods which already occur in $\kappa_1\left(\sig^4(\splus{0})\right)$, and since $\kappa_1\left(\sig^4(\splus{0})\right)$ satisfies the frame relations -- this finishes the inductive step for $s=\splus{0}$.

A similar argument works for $s=\sminus{0}$. Since it is clear from \eqref{eq:inner-frame-f2} and \eqref{eq:outer-frame-f2} that over $\ft$ the frame relations are invariant under $\rho$, by Lemma~\ref{lem: kappa equivariance} the same holds for every $s\in \set{\splus{i}, \sminus{i}:0\le i\le 3}$. 	
\quad\\

\noindent\textbf{Case 3}. For $s=\tiler{a}$, let $P$ be an inner frame pattern in $\ka_1\left(\sig^{l+1}\left(\tiler{a}\right)\right)$ with corners $A,B,C,D$. If $P$ is contained in one of the quarters, the inductive hypothesis applies and the corresponding frame relation holds. If some non-opposite corners of the pattern are $0$'s then the corresponding inner frame relation holds. Applying Lemma~\ref{lem: kappa1 of r0} and taking into account that in an inner frame pattern, all the values are $0$'s apart from potentially the corners, gives that the complimentary cases are exactly when the corners are sitting on the diagonally-aligned square of $1$'s which appears in the statement of that lemma. In this case, the inner frame relation holds by recalling their form over $\ft$ in \eqref{eq:inner-frame-f2}.
	
Assume now that $P'$ is an outer frame pattern in $\ka_1\left(\sig^{l+1}\left(\tiler{a}\right)\right)$ and let $P$ be the inner frame pattern which corresponds to it, with corners $A,B,C,D$ such that $ABCD\ne 0$. If $P'$ is contained in one of the quarters of $\ka_1\left(\sig^{l+1}\left(\tiler{a}\right)\right)$, then the inductive hypothesis applies. If not, the previous paragraph implies that the corners $A,B,C,D$ sit on the diagonally-aligned square of $1$'s in $\ka_1\left(\sig^{l+1}\left(\tiler{a}\right)\right)$. Using the notation in Definition~\ref{def:flat-frame-relations}\eqref{item: FR outer}, observe that if $E=H$ and $F=G$ then the outer frame relation holds, by recalling its simple form over $\ft$ in \eqref{eq:outer-frame-f2}. This is verified now. As shown in the proof of Lemma~\ref{lem: kappa1 of r0}, 
\begin{equation}\label{eq: 4 quarters}
	\ka_1\left(\sig^{l+1}\left(\tiler{a}\right)\right) = \br{
	\begin{array}{c c}
	M_2 &{\rho^2}M_1\\ M_1 & {\rho^2} M_2
	\end{array}}.
\end{equation}
where $M_1=\ka_1\left(\sig^l(\splus{0})\right)$ and $M_2 =\rho \left(\ka_1\left(\sig^l(s)\right)\right)$ for $s\in \set{\splus{0},\sminus{0}}$. In any case, the conclusion is that indeed $H=E$ and $F=G$. This finishes the inductive step for $s=\tiler{a}$. The analysis for $\tiler{b}$ is similar. This finishes the inductive step for all cases and verifies Claim~\ref{claim: induction}, which finishes the proof of the theorem.
\end{proof}

\section{Escape of mass of the Thue-Morse sequence}\label{sec:uniform-escape-of-mass}
In this section we compute the escape of mass of $t^j\tm$ where $\tm$ is the Thue-Morse sequence thought of as a series in $\ftttt$. Escape of mass is expressed in Definition~\ref{def:escape} in terms of degrees of partial quotients of Laurent series. As we explain shortly, these degrees are tightly related to the length of consecutive zeros in number walls. However, in order to use the self-similarity of the number wall $\frakf(\alpha)$, it is convenient to extend the definition of escape of mass to number walls in general.

\subsection{Motivational picture}
We start by explaining an intuitive picture that motivates our discussion and terminology. Imagine a space $X$ partitioned into a sequence of subsets $X = \bigsqcup_{i=0}^\infty X_i$. The function $\mb{h}:X\to\bZ_{\ge 0}$ defined by $\mb{h}(x) =i\iff x\in X_i$ will be thought of as a height function, and so we think of $X$ as a tower with $X_i$ being the $i$\textsuperscript{th} floor. Imagine that there is a map $T:X\to X$ having the following two properties:
\begin{enumerate}
\item For all $x\in X$, $\mb{h}(T(x))\in \set{\mb{h}(x)+1, \mb{h}(x)-1}\cap \bZ_{\ge 0}$. That is, the height must change under the action of $T$ and cannot change by more than one.
\item If $\mb{h}(x) = i>0$, and $\mb{h}(T(x))=i-1$, then for any $0\le j\le i$, $\mb{h}(T^j(x)) = i-j$. That is, once an orbit starts going down, it will go down until it reaches floor $0$.
\end{enumerate}
Fix $x\in X_0$ and suppose we are given a sequence $P=\set{P_i}_{i=1}^\infty\in \bF_q^\bN$ with the property that 
$P_i \ne 0\iff T^i(x)\in X_0$. Then, we can recover the height of $T^i(x)$ by studying the $0$'s in the sequence $P$: each time we see in the sequence $P$ a pattern of the form $P_l,0,\dots, 0,P_{l+r}$, with $P_l\cdot P_{l+r}\ne 0$, then it follows that $r$ is even and the height of $T^{l+i}(x)$  satisfies 
\begin{equation}\label{eq:motivational}
	\mb{h}\left(T^{l+i}(x)\right) = \begin{cases}
	i & \mbox{ for } 0\le i\le r/2\\
	r/2 - (i-r/2) &\mbox{ for } r/2+1\le i \le r.
	\end{cases}
\end{equation}
This is exactly what happens behind the scenes when one restricts attention to a sequence that appears as a column (or a row) of a diagonally-aligned number wall. 

\subsection{Escape of mass for sequences}
In this section we develop the terminology which is necessary for the consideration of escape of mass in the context of general sequences over a finite field.
\begin{definition}\label{def:escape-for-sequences}
	Let $P:[m_1,m_2]\to \fq$ be any sequence over $\fq$ which is not identically zero. The \emph{height of $P$} is a function $\height_P:[m_1,m_2]\to\bbn$ defined by
	$$
	\height_P(i) \defeq \min_{P(i')\neq0} |i'-i|\,.
	$$
	The \emph{escape of mass of $P$} is a function $\esc_{P}:\bbn\to\bbn$ defined by
	$$
	\esc_P(d) \defeq \frac{\#\left\{m_1\leq i<m_2 \sep \max\left\{\height_P(i),\height_P(i+1)\right\}>d\right\}}{m_2-m_1}\,.
	$$
We refer to $\esc_P(d)$ as \emph{the mass of $P$ that lies above height $d$}.
\end{definition}

A basic tool that will be used several times later in this section is the ability to compute the escape of mass of a finite sequence by considering a partition of it into subsequences. In the statement of the next lemma we will use the square bracket notation that was introduced in Convention~\ref{conv: overlap2}.

\begin{lemma}\label{lem:gluing}
	Let $P':[0,m']\to \fq$ and $P'':[0,m']\to \fq$ be sequences over $\fq$ such that $P'(m')=P''(0)$, and set 
	$$P=\br{\begin{array}{c} P'\\P'' \end{array}}.$$
	Assume that $P'$ ends with precisely $0\leq d'<m'$ consecutive $0$'s while $P''$ starts with precisely $0\leq d''<m''$ consecutive $0$'s.
	Then for every $d$ which satisfies
	\begin{equation}\label{eq:overlap}	
		\max\{d',d''\}\leq d\,,
	\end{equation}
	the mass of $P$ which lies above $d$ satisfies 
	$$
	\esc_P(d) = 	\frac{m'}{m'+m''}\esc_{P'}(d)+\frac{m''}{m'+m''}\esc_P(d)\,.
	$$
\end{lemma}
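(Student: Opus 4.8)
The plan is to reduce the claimed identity to a combinatorial statement about how the index range of $P$ splits into those of $P'$ and $P''$. Unfolding the bracket notation, $P:[0,m'+m'']\to\fq$ is given by $P(i)=P'(i)$ for $0\le i\le m'$ and $P(m'+j)=P''(j)$ for $0\le j\le m''$, the two definitions agreeing at $i=m'$ by hypothesis. Let $N,N',N''\subseteq[0,m'+m'']$ be the sets of positions at which $P$, $P'$ (read inside $P$), and $P''$ (shifted by $m'$) take a nonzero value, so that $N=N'\cup N''$ and $\height_P(i)=\min(\dist(i,N'),\dist(i,N''))$, while $\height_{P'}(i)=\dist(i,N')$ for $0\le i\le m'$ and $\height_{P''}(j)=\dist(m'+j,N'')$ for $0\le j\le m''$. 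In particular $\height_P\le\height_{P'}$ on $[0,m']$ and $\height_P(m'+j)\le\height_{P''}(j)$ for $0\le j\le m''$.

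First I would record the consequences of the hypotheses on the boundary runs of zeros. Since $P'(m')=P''(0)$, one has $d'=0$ if and only if $d''=0$; the largest element of $N'$ is $r:=m'-d'$ and the smallest element of $N''$ is $\ell:=m'+d''$. A one-line computation then shows that for $0\le i\le r$ one has $\dist(i,N')\le r-i\le\ell-i=\dist(i,N'')$, hence $\height_P(i)=\height_{P'}(i)$; and for $r<i\le m'$ the nonzero positions of $P'$ all lie in $[0,r]$, so $\height_{P'}(i)=i-r\le m'-r=d'\le d$, whence also $\height_P(i)\le d$. The mirror image of this argument gives $\height_P(m'+j)=\height_{P''}(j)$ for $d''\le j\le m''$, while $\height_{P''}(j)=d''-j\le d$ and $\height_P(m'+j)\le d$ for $0\le j<d''$. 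This is the only place the hypothesis $\max\{d',d''\}\le d$ is used.

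Next, for the escape counts, set $B_P=\{\,i:0\le i<m'+m'',\ \max(\height_P(i),\height_P(i+1))>d\,\}$, and let $B_{P'}\subseteq[0,m'-1]$ and $B_{P''}\subseteq[0,m''-1]$ be the corresponding sets for $P'$ and $P''$. I claim $B_P\cap[0,m'-1]=B_{P'}$ and $B_P\cap[m',m'+m''-1]=m'+B_{P''}$. The inclusion $B_P\cap[0,m'-1]\subseteq B_{P'}$ is immediate from $\height_P\le\height_{P'}$ on $[0,m']$. Conversely, if $i\in B_{P'}$ then $\height_{P'}(i_0)>d$ for some $i_0\in\{i,i+1\}\subseteq[0,m']$; by the previous paragraph $\height_{P'}$ never exceeds $d$ on $(r,m']$, so $i_0\le r$, where $\height_P=\height_{P'}$, and therefore $\height_P(i_0)>d$ and $i\in B_P$. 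The statement for $P''$ is symmetric, with $\ell$ and the bound on $[0,d''-1]$ playing the roles of $r$ and the bound on $(r,m']$. Since $[0,m'-1]$ and $[m',m'+m''-1]$ partition the index range $[0,m'+m''-1]$ of $\esc_P$, the two equalities give $\#B_P=\#B_{P'}+\#B_{P''}$, and dividing by $m'+m''$ yields
$$\esc_P(d)=\frac{\#B_{P'}+\#B_{P''}}{m'+m''}=\frac{m'}{m'+m''}\,\esc_{P'}(d)+\frac{m''}{m'+m''}\,\esc_{P''}(d),$$
which is the assertion of the lemma (the last term of the displayed formula in the statement should read $\esc_{P''}(d)$).

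I do not expect a genuine obstacle; the whole content is the bookkeeping of the second paragraph. The one place that needs care is the overlap index $i=m'$ of $P$, which has to be assigned to the $P''$-block since it corresponds to $j=0$, together with the degenerate case $d'=d''=0$ in which a nonzero entry sits exactly at the join: there $r=\ell=m'$, the ``clean'' regions $[0,r]$ and $[d'',m'']$ already exhaust the two index sets, and the general argument goes through unchanged, but it is worth spelling out.
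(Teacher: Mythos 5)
Your proof is correct and follows essentially the same route as the paper's: both arguments show that the height functions of $P$, $P'$ and $P''$ agree wherever they exceed $d$ (using $\max\{d',d''\}\leq d$ to neutralise the overlap region near the join), and then split the count of bad indices over $[0,m'-1]$ and $[m',m'+m''-1]$. You are also right that the last term of the displayed identity is a typo for $\esc_{P''}(d)$ (and that $P''$ should be defined on $[0,m'']$); the paper's own proof concludes with exactly the corrected formula.
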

\begin{proof}
	For every $d$ as in \eqref{eq:overlap}, the height functions satisfy 
	\begin{equation}\label{eq:PP}
		\begin{aligned}
			\height_{P'}(i) &> d \; \textrm{if and only if} \;	\height_{P}(i) > d\,,\; \textrm{for every} \; 0\leq i<m'\,,\; \textrm{and}\\
			\height_{P''}(i) &> d \; \textrm{if and only if} \;	\height_{P}(m'+i) > d\,,\; \textrm{for every} \; 0\leq i<m''\,.
		\end{aligned}
	\end{equation}
	The first line in \eqref{eq:PP} will be proved and the second line follows by symmetry. Since $P'$ is a subsequence of $P$, it satisfies $\height_{P}(i) \leq \height_{P'}(i)$. Therefore, it is enough to show that if $\height_{P'}(i) > d$, then $\height_{P}(i) > d$. By assumption, $P(m'-d')\neq0$, so $\height_{P}(i)=\height_{P'}(i)$ for every $0\leq i\leq m'-d'$, and it is enough to consider $i>m'-d'$. Moreover, for any $m'-d' < i < m'$, the height function satisfies $$\height_{P'}(i) = i+1 - (m'-d')\leq d'\,.$$ 
	Since $d'\leq d$ by \eqref{eq:overlap}, this finishes the argument for \eqref{eq:PP}. Using \eqref{eq:PP} gives
	\ignore{\begin{align*}
		&\#\left\{0\leq i<m'+m'' \sep \max\left\{\height_P(i),\height_P(i+1)\right\}>d\right\}
		\\
		&=\#\left\{0\leq i<m' \sep \max\left\{\height_{P'}(i),\height_{P'}(i+1)\right\}>d\right\} + \#\left\{m'\leq i<m'' \sep \max\left\{\height_{P''}(i),\height_{P''}(i+1)\right\}>d\right\}
		\\
	\end{align*}}
	\begin{align*}
		\esc_P(d) &= \frac{\#\left\{0\leq i<m'+m'' \sep \max\left\{\height_P(i),\height_P(i+1)\right\}>d\right\}}{m'+m''}
		\\
		&=
		\frac{m'}{m'+m''}\frac{\#\left\{0\leq i<m' \sep \max\left\{\height_{P'}(i),\height_{P'}(i+1)\right\}>d\right\}}{m'} + 
		\\
		&\phantom{=} \frac{m''}{m'+m''}\frac{\#\left\{0\leq i<m'' \sep \max\left\{\height_{P''}(i),\height_{P''}(i+1)\right\}>d\right\}}{m''}
		\\
		&=
		\frac{m'}{m'+m''}\esc_{P'}(d) + \frac{m''}{m'+m''}\esc_{P''}(d)
		\,.
	\end{align*}
	\ignore{\item Since $|i'-i''|\leq m$,
		\begin{align*}
			\height_Q(i)&>d\,, &&\textrm{ if and only if } &\height_P(i)&>d\,, \textrm{ for every $0\leq i< m'$ and }\,,\\
			\height_Q(i+m+m')&>d\,, &&\textrm{ if and only if } &\height_{P'}(i)&>d\,, \textrm{ for every $0\leq i< m''$}\,.\\
		\end{align*}
		Therefore,
		\begin{align*}
			&\#\left\{0\leq i<m+m'+m'' \sep \max\left\{\height_Q(i),\height_Q(i+1)\right\}>d\right\}
			\\
			&=\#\left\{0\leq i<m' \sep \max\left\{\height_{P'}(i),\height_{P'}(i+1)\right\}>d\right\} + \#\left\{m'\leq i<m'' \sep \max\left\{\height_{P''}(i),\height_{P''}(i+1)\right\}>d\right\}
			\\
		\end{align*}}
\end{proof}

Finally, we will also need the computation of the escape of mass for a basic building block which is composed out of a stretch of $0$'s between nonzero elements. 
\begin{lemma}\label{lem:basic}
	For any $r>0$ let $P:[0,r]\to \fq$ be a sequence which satisfies $P(i) = 0$ if and only if $0<i<r$. 
	Then for every $d$, the escape of mass of $P$ satisfies
	\begin{equation}\label{eq:motivational-escape}
		\esc_{P}(d) = \frac{2\max\{r/2-d,0\}}{r}\,.  
	\end{equation}
\end{lemma}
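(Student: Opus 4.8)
The plan is a direct computation from Definition~\ref{def:escape-for-sequences}.

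\textbf{Step 1: the support of $P$ and the height function.} The hypothesis ``$P(i)=0$ if and only if $0<i<r$'' says that the only nonzero entries of $P$ are $P(0)$ and $P(r)$. Hence for every $i\in[0,r]$,
$$
\height_P(i)=\min\{|0-i|,\,|r-i|\}=\min\{i,\,r-i\}\,,
$$
a tent-shaped function: it increases (by $1$ at each step) on $\{0,1,\dots,\lfloor r/2\rfloor\}$, where $\height_P(i)=i$, and decreases on $\{\lceil r/2\rceil,\dots,r\}$, where $\height_P(i)=r-i$.

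\textbf{Step 2: the maxima $M(i):=\max\{\height_P(i),\height_P(i+1)\}$.} I would carry out the count for even $r=2s$, which is the only case that occurs in our application (the relevant zero gaps in a diagonally-aligned number wall have even length, by the parity and inner frame relations of Definition~\ref{def:flat-frame-relations}). From Step 1, for $0\le i\le s-1$ one has $\height_P(i)=i$ and $\height_P(i+1)=i+1\le s$, hence $M(i)=i+1$; and for $s\le i\le 2s-1$ one has $\height_P(i)=2s-i$ and $\height_P(i+1)=2s-i-1$, hence $M(i)=2s-i$. Therefore the list $\bigl(M(0),M(1),\dots,M(2s-1)\bigr)$ equals $(1,2,\dots,s-1,s,s,s-1,\dots,2,1)$; in particular each value $v\in\{1,\dots,s\}$ is attained exactly twice. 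The only mildly delicate point is the behaviour near the peak ($i=s-1,s$), which the two formulas above already handle uniformly.

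\textbf{Step 3: the count.} Since $m_2-m_1=r-0=r=2s$, the numerator of $\esc_P(d)$ is
$$
\#\{0\le i<r:\ M(i)>d\}=2\cdot\#\{v\in\{1,\dots,s\}:\ v>d\}=2\max\{s-d,\,0\}\,,
$$
and dividing by $r=2s$ gives
$$
\esc_P(d)=\frac{2\max\{s-d,0\}}{2s}=\frac{2\max\{r/2-d,\,0\}}{r}\,,
$$
which is exactly~\eqref{eq:motivational-escape}. There is no genuine obstacle in this argument --- it is pure bookkeeping --- and the only place to stay attentive is aligning the tent-shaped height function against the index set $\{0,1,\dots,r-1\}$ so that each height value in $\{1,\dots,s\}$ is counted with the correct multiplicity $2$; this is also the point where the evenness of $r$ is used, so if one wanted the statement verbatim for all $r$ one would simply restrict to the even case that the lemma is applied to.
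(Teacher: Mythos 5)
Your proof is correct and follows essentially the same route as the paper's: both compute the tent-shaped height function $\height_P(i)=\min\{i,r-i\}$ and then count directly. In fact you are more careful than the paper on two points --- you actually track the quantity $\max\{\height_P(i),\height_P(i+1)\}$ appearing in Definition~\ref{def:escape-for-sequences} (the paper's displayed chain silently replaces it by $\height_P(i)$, creating an off-by-one in the intermediate count that happens to cancel in the final answer), and you correctly observe that the stated formula requires $r$ to be even (for odd $r$, e.g.\ $r=3$ and $d=1$, the true value is $0$ while the right-hand side of \eqref{eq:motivational-escape} gives $1/3$), which is indeed the only case in which the lemma is applied.
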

\begin{proof}
	As in \eqref{eq:motivational}, the height function of $P_r$ satisfies 
	\begin{equation*}
		\height_{P}(i)= 
		\begin{cases}
			i & \mbox{ for } 0\le i\le r/2\\
			r - i &\mbox{ for } r/2 < i \le r.
		\end{cases}
	\end{equation*}
	Therefore, for every $d$,
	\begin{align*}
		\esc_{P}(d) &= \frac{\#\left\{0\leq i< r\sep \height_{P}(i)>d \right\}}{r} \\
		& = \frac{2\#\left\{0\leq i \leq r/2\sep \height_{P}(i)>d \right\}}{r} 
		\\
		&= \frac{2\max\{r/2-d,0\}}{r}\,.
	\end{align*}
	as claimed.
\end{proof}

\subsection{Partial escape of mass}

We will show now that for $\theta=\{b_n\}_{n=1}^\infty$ thought of also as the Laurent series $\theta=\sum_{n=1}^\infty b_nt^{-n}\in\fqttt$, Definition~\ref{def:escape-for-sequences} applied to the first column of $\frakf(\theta)$ agrees with Definition \ref{def:escape}. Recall that \eqref{eq:diophantine} defines the integers
\begin{equation*}
	i_{k} = \sum_{k'=0}^kd_{k'} 
\end{equation*}
for every $k\geq1$, and set $i_0=i_0(\theta)\defeq0$. For every $k\geq1$ and $d\geq0$, the mass of $\theta$ which lies above $d$ up to the $k$\textsuperscript{th} degree is defined in \eqref{eq:escape} as 
$$
\es_{\theta}(d,k) 
= 	\frac{\sum_{k'=1}^{k}\max\left\{d_{k'}-d,0\right\}}
{\sum_{k'=1}^{k}d_{k'}}\,.
$$

\begin{proposition}\label{prop:escape}
	For any $\theta=\{b_n\}_{n=1}^\infty$, 
	\begin{equation*}
		\es_{\theta}(d,k) = \esc_{\left.\frakf(\theta)\right|_{\left[0,2i_k\right]\times\{1\}}}(d)\,.
	\end{equation*}
\end{proposition}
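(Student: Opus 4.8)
The plan is to unwind both sides of the claimed identity in terms of the location of nonzero entries in the first column of $\frakf(\theta)$, and then invoke Lemma~\ref{lem:gluing} and Lemma~\ref{lem:basic} to assemble the computation block by block. First I would record what the first column of $\frakf(\theta)$ looks like: by \eqref{eq:explicit2}, the entry $\frakf_{m,1}(\theta)$ equals $\frakt_{\frac{m-1}{2},\frac{m}{2}}(\theta)$ when $m$ is odd and $0$ when $m$ is even, and by \eqref{eq:last} (transported from Hankel to Toeplitz determinants via the sign relation $\frakt_{m,n}(\theta) = (-1)^{m(m+1)/2}\frakh_{m+1,n-m}(\theta)$ recorded in the excerpt), a nonzero entry occurs at row $m$ if and only if $m$ is odd and $\frac{m-1}{2} = i_k(\theta)$ for some $k\ge 0$, i.e.\ if and only if $m = 2i_k+1$ for some $k\ge 0$ (using $i_0=0$, which matches $\frakf_{1,1}(\theta)=\frakt_{0,0}(\theta)=b_0$ or the normalisation $\frakf_{0,1}=1$; I would pin down the exact indexing to be consistent with the convention in Theorem~\ref{thm:Lunnon-for-flats}). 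So the nonzero positions of $\left.\frakf(\theta)\right|_{[0,2i_k]\times\{1\}}$ are exactly $\{0, 2i_1, 2i_2,\dots,2i_k\}$ — wait, more precisely $\{2i_0, 2i_1, \dots, 2i_k\}$ shifted appropriately, and the gaps between consecutive nonzero positions have lengths $2d_1, 2d_2, \dots, 2d_k$.

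Next I would apply Lemma~\ref{lem:gluing} inductively to split $\left.\frakf(\theta)\right|_{[0,2i_k]\times\{1\}}$ into the $k$ consecutive blocks $P^{(k')}$ for $k'=1,\dots,k$, where $P^{(k')}$ runs over the index interval of length $2d_{k'}$ between the $(k'-1)$\textsuperscript{st} and $k'$\textsuperscript{th} nonzero entries. Each $P^{(k')}$ is of the form handled by Lemma~\ref{lem:basic} with $r = 2d_{k'}$: it has nonzero endpoints and zeros strictly in between. The hypothesis \eqref{eq:overlap} of Lemma~\ref{lem:gluing} needs to be checked at each gluing: the number of trailing zeros of the left block and the number of leading zeros of the right block are each exactly $2d_{k'}-1$ and $2d_{k'+1}-1$ respectively — hmm, that is not $\le d$ in general. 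Let me reconsider: actually at the gluing point the shared entry is nonzero, so the left block ends with $0$ trailing zeros and the right block starts with $0$ leading zeros (the shared nonzero entry sits at the boundary). This makes $d'=d''=0 \le d$ trivially, so \eqref{eq:overlap} holds for every $d\ge 0$ and the gluing lemma applies cleanly at every step. Then by Lemma~\ref{lem:basic}, $\esc_{P^{(k')}}(d) = \frac{2\max\{d_{k'}-d,0\}}{2d_{k'}} = \frac{\max\{d_{k'}-d,0\}}{d_{k'}}$, and the block $P^{(k')}$ has length $2d_{k'}$.

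Finally, iterating Lemma~\ref{lem:gluing} gives
$$
\esc_{\left.\frakf(\theta)\right|_{[0,2i_k]\times\{1\}}}(d) = \sum_{k'=1}^k \frac{2d_{k'}}{2i_k}\,\esc_{P^{(k')}}(d) = \sum_{k'=1}^k \frac{d_{k'}}{i_k}\cdot\frac{\max\{d_{k'}-d,0\}}{d_{k'}} = \frac{\sum_{k'=1}^k \max\{d_{k'}-d,0\}}{\sum_{k'=1}^k d_{k'}} = \es_\theta(d,k),
$$
which is exactly the claim. I expect the only real subtlety to be bookkeeping: making sure the normalisation of row $m=0$ in $\frakf(\theta)$ and the index $i_0=0$ are handled so that the first nonzero entry and the first block line up correctly, and that the half-open interval $[0,2i_k]$ contributes exactly the blocks $P^{(1)},\dots,P^{(k)}$ with total length $2i_k$ and no leftover. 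I would also state explicitly that $\frakf(\theta)$ restricted to any finite interval is not identically zero (so that $\esc$ and $\height_P$ are well-defined), which holds since $\frakf_{1,1}(\theta)\ne 0$ — or, in the normalised extension, $\frakf_{0,1}=1$. Once these conventions are fixed the proof is a one-line induction on $k$ using the two lemmas.
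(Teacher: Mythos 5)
Your proof is correct and follows essentially the same route as the paper's: both rest on the characterisation that the nonzero entries of the first column of $\frakf(\theta)$ sit exactly at rows $2i_0,2i_1,\dots$, and both then decompose $[0,2i_k]$ into the $k$ gaps of lengths $2d_1,\dots,2d_k$ and compute the contribution $2\max\{d_{k'}-d,0\}$ of each — the paper just does this count inline (its equation \eqref{eq:aligned}) rather than formally invoking Lemma~\ref{lem:gluing} and Lemma~\ref{lem:basic}, which is a packaging difference only. One slip worth fixing: with $n=1$, equation \eqref{eq:explicit2} puts the (possibly) nonzero entries at \emph{even} rows $m$, namely $\frakf_{m,1}=\frakt_{\frac{m}{2}-1,\frac{m}{2}}$, so the nonzero positions are $m=2i_k$ and not $m=2i_k+1$ as your intermediate parity computation asserts; you self-correct to the right set $\{2i_0,\dots,2i_k\}$ afterwards, and nothing downstream is affected, but the displayed formula $\frakt_{\frac{m-1}{2},\frac{m}{2}}$ for odd $m$ should be repaired.
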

\begin{proof}
	The standard facts from Diophantine approximation of Laurent series quoted in \eqref{eq:last} and \eqref{eq: frakf tile} mean that
	\begin{equation}\label{eq:last1}
		\frakf_{2i,1}(\theta)\neq0 \quad\iff\quad i=i_k \text{ for some }k\geq0\,,
	\end{equation}
	where the case $k=0$ holds since $\frakf_{0,1}=1$ by definition. Fix any $k\geq1$, and consider the first column of the tiling $\frakf$ between row $0$ to row $2i_k$. For the purpose of the current proof it is convenient to use the shortened notation for the height function along this column:
	\[
	\height \defeq \height_{\left.\frakf(\theta)\right|_{\left[0,2i_k\right]\times\{1\}}}\,.
	\]
	Then \eqref{eq:last1} implies that for every $d\geq0$,
	\begin{equation}\label{eq:aligned}
		\begin{aligned}
			&\#\left\{0\leq i<2i_k \sep \max\left\{\height(i),\height(i+1)\right\}>d\right\}
			\\
			&=\sum_{k'=1}^k\#\left\{2i_{k'-1}\leq i<2i_{k'} \sep \max\left\{\height(i),\height(i+1)\right\}>d\right\}
			\\
			&=\sum_{k'=1}^k2\max\{d_{k'}-d,0\}\,.
		\end{aligned}
	\end{equation}
	Therefore, the mass of $\theta$ which lies above $d$ up to the $k$\textsuperscript{th} degree as defined in \eqref{eq:escape} satisfies
	\begin{equation}
		\begin{aligned}
			\es_{\theta}(d,k) 
			&= 	\frac{\sum_{k'=1}^{k}\max\left\{d_{k'}\left(\theta\right)-d,0\right\}}
			{\sum_{k'=1}^{k}d_{k'}\left(\theta\right)}
			\\[2pt] 
			&= 
			\frac{2\sum_{k'=1}^{k}\max\left\{d_{k'}\left(\theta\right)-d,0\right\}}
			{2i_k}
			= \esc_{\left.\frakf(\theta)\right|_{\left[0,2i_k\right]\times\{1\}}}(d)\,.
		\end{aligned}
	\end{equation}
\end{proof}

Note that for every $j\geq0$, the number wall of $\theta=\{b_{n+j}\}_{n=1}^\infty$, which is the sequence $\theta$ shifted to the left $j$ times, is given by a shift of the indices of the number wall of $\theta$ by the vector $(j,j)$. Formally, every $m$ and $n$ satisfy
$$
\frakf_{m,n}\left(t^j\theta\right) = \frakf_{m+j,n+j}(\theta)\,.
$$
This follows from the definition of $\frakf$ in terms of Hankel or Toeplitz determinants. Therefore, Proposition~\ref{prop:escape} implies that for every $j\geq0$, the escape of mass of $t^j\theta$ satisfies 
\begin{equation}\label{eq:follows}
	\es_{t^j\theta}(d,k) = \esc_{\left.\frakf(\theta)\right|_{\left[j,2i_k(t^j\theta)+j\right]\times\{j+1\}}}(d)\,.
\end{equation}

The rest of this section deals with the number wall of the Thue-Morse sequence $\frakf=\frakf(\tm)$, which as we proved in Section~\ref{sec:structure} is (after ignoring its $0$\textsuperscript{th} row) equal to $\kappa_1\left(\sigma^\infty(\splus{0})\right)$. Our immediate goal is to use the structure we developed in that section in order to express the escape of mass along each of its columns in simple terms. In the following theorem we will show that these columns are periodic. Furtheremore, the period will be described explicitly and this will give a useful expression of the escape of mass in terms of the period. It is convenient to introduce the following shortened notation: for any $d,l\geq0$ and $0\leq j \leq 2^{l+2}$ denote
\[
\esc_{l,j}(d) \defeq \esc_{\left.\kappa_1\left(\sigma^l\left(\tiler{a}\right)\right)\right|_{\left[1,2^{l+2}\right]\times\{j+1\}}}(d)\,.
\]

\begin{theorem}\label{thm:escape}
	For every $d\geq5$, $l\geq0$ and $0 \leq j \leq 2^{l+2}$, the limit as $k$ goes to infinity of the mass of $t^j\theta$ that lies above $d$ up to the $k$\textsuperscript{th} degree exists, and satisfies
	\[
	\lim_{k\to\infty}\es_{t^j\tm}(d,k) = \frac{1}{3}\esc_{l,j}(d) + \frac{2}{3}\esc_{l+1,j}(d)\,.
	\]
\end{theorem}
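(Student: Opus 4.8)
The plan is to realise the $(j+1)$\textsuperscript{st} column of $\frakf(\tm)$ as an explicit periodic sequence whose escape of mass can be read off from a single period by means of Lemma~\ref{lem:gluing}. First, Proposition~\ref{prop:escape} together with the shift identity~\eqref{eq:follows} (applied with $\theta=\tm$) gives $\es_{t^j\tm}(d,k)=\esc_{\left.\frakf(\tm)\right|_{[j,\,2i_k(t^j\tm)+j]\times\{j+1\}}}(d)$, and since every $d_{k'}\ge 1$ the index $i_k(t^j\tm)=\sum_{k'=1}^k d_{k'}$ tends to $\infty$ with $k$. So it suffices to prove that the escape of mass $\esc_{\left.\frakf(\tm)\right|_{[j,N]\times\{j+1\}}}(d)$ of the initial segment of the $(j+1)$\textsuperscript{st} column of $\frakf(\tm)$ converges, as $N\to\infty$, to $\tfrac13\esc_{l,j}(d)+\tfrac23\esc_{l+1,j}(d)$.

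Next I would describe that column. By Theorem~\ref{thm:main2}, $\frakf(\tm)$ coincides with $\ka_1(\sig^\infty(\splus{0}))$ off the $0$\textsuperscript{th} row. Viewing $\sig^L(\splus{0})$, for large $L$, as a block matrix with blocks of size $2^l$, its first block-column consists of the blocks $\sig^l(s)$ as $s$ ranges over the first column of $\sig^{L-l}(\splus{0})$, which is periodic with period $\splus{0},\tiler{a},\sminus{1}$ by Lemma~\ref{lem:sigmal0}; hence the first $2^l$ columns of $\sig^\infty(\splus{0})$ form the vertical concatenation, with period $3$, of the blocks $\sig^l(\splus{0})$, $\sig^l(\tiler{a})$, $\sig^l(\sminus{1})$. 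Since $\sig(\tiler{a})=\mat{\sminus{1}&\splus{2}\\ \splus{0}&\sminus{3}}$, the first $2^l$ columns of $\sig^{l+1}(\tiler{a})$ equal $\sig^l(\sminus{1})$ stacked on $\sig^l(\splus{0})$. Applying $\ka_1$ with the overlap of Convention~\ref{conv: overlap} and reading off column $j+1$ (legitimate since $j+1\le 2^{l+2}+1$ lies within the coded width of $\ka_1(\sig^l(\tiler{a}))$), I conclude that the $(j+1)$\textsuperscript{st} column of $\frakf(\tm)$ is periodic, a period $Q$ being the concatenation of $\left.\ka_1(\sig^l(\tiler{a}))\right|_{[1,2^{l+2}]\times\{j+1\}}$ with $\left.\ka_1(\sig^{l+1}(\tiler{a}))\right|_{[1,2^{l+3}]\times\{j+1\}}$, i.e.\ the blocks arranged in the cyclic order $\sig^l(\tiler{a})$, $\sig^l(\sminus{1})$, $\sig^l(\splus{0})$.

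It then remains to compute the escape of mass of this periodic column. I would apply Lemma~\ref{lem:gluing} at the two junctions of the period $Q$ chosen above: the junction between the $\sig^l(\tiler{a})$-block and the $\sig^l(\sminus{1})$-block, and the wrap-around junction between the $\sig^l(\splus{0})$-block and the $\sig^l(\tiler{a})$-block. By Lemmas~\ref{lem:sigmal0} and~\ref{lem:sigmal8}, each of these junctions lies along a row of $\sig^\infty(\splus{0})$ containing no copy of the tile $\tilo$ — the relevant last and first rows of the $\sig^l$-images are periodic sequences of ``full'' tiles — so the two coded columns abutting such a junction end, respectively begin, with only a short run of zeros; this is exactly one of the ``middle strips'' analysed in the proof of Theorem~\ref{thm:main2-reduced}, where no diagonally-aligned square of zeros of size greater than $5$ occurs. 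Thus for $d\ge 5$ the hypotheses of Lemma~\ref{lem:gluing} hold at both junctions. Iterating it at the wrap-around junction shows that the escape of mass of an initial segment of the periodic column converges, as its length grows, to $\esc_Q(d)$ (the transient prefix and final partial period contributing $O(1/N)$), which establishes the convergence asserted in the first paragraph. A final application of Lemma~\ref{lem:gluing} at the junction between the $\sig^l(\tiler{a})$-block and the $\sig^l(\sminus{1})$-block splits $Q$, of length $2^{l+2}+2^{l+3}=3\cdot 2^{l+2}$, into its two pieces of lengths $2^{l+2}$ and $2^{l+3}$, giving $\esc_Q(d)=\tfrac{2^{l+2}}{3\cdot 2^{l+2}}\esc_{l,j}(d)+\tfrac{2^{l+3}}{3\cdot 2^{l+2}}\esc_{l+1,j}(d)=\tfrac13\esc_{l,j}(d)+\tfrac23\esc_{l+1,j}(d)$, which is the claim.

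The step I expect to be the main obstacle is the verification that these two junctions are ``benign'' in the precise technical sense required by Lemma~\ref{lem:gluing}, namely that the abutting coded columns carry runs of zeros of length at most $d$ there; this is where the hypothesis $d\ge 5$ is used, and it rests on the detailed structure of $\sig^\infty(\splus{0})$ and its coding established in Lemmas~\ref{lem:sigma0_diagonal}--\ref{lem: kappa1 of r0} rather than on any soft argument. Note in particular that the third junction, between the $\sig^l(\sminus{1})$-block and the $\sig^l(\splus{0})$-block, is \emph{not} benign — it is where the large windows of $\frakf(\tm)$ sit — but it is never cut: it lies inside the $\ka_1(\sig^{l+1}(\tiler{a}))$-piece of $Q$, so the corresponding long gap is accounted for identically in $\esc_{l+1,j}(d)$ and in the column of $\frakf(\tm)$.
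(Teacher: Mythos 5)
Your proof is correct and follows essentially the same route as the paper's: both reduce to the periodicity of the $(j+1)$\textsuperscript{st} column with period the coded blocks in the cyclic order $\tiler{a},\sminus{1},\splus{0}$, cut only at the two benign junctions via Lemma~\ref{lem:gluing} (justified by the middle-strip analysis from the proof of Theorem~\ref{thm:main2-reduced} together with Lemmas~\ref{lem:sigmal0} and~\ref{lem:sigmal8}), and keep the non-benign $\sminus{1}$--$\splus{0}$ junction inside the $\sig^{l+1}\left(\tiler{a}\right)$ piece, yielding the weights $\sfrac13$ and $\sfrac23$. Your explicit remark that the large windows sit at the uncut junction is exactly the point the paper makes implicitly by shifting the window to $\left[2^{l+2},4\cdot2^{l+2}\right]$.
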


\begin{proof}
	Fix $l\geq0$ and $0 \leq j \leq 2^{l+2}$. By Lemma \ref{lem:sigmal0}, the first column of $\sigma^{\infty}(\splus{0})$ is periodic with period
	\begin{align*}
		\begin{array}{c} \splus{0}\\ \tiler{a}\\ \sminus{1}\end{array}.
	\end{align*}
	Consequently, for 
	\[
	j'\defeq\min\left\{\left\lfloor\frac{j}{4}\right\rfloor,2^{l}-1\right\},
	\] 
	the $(j')$\textsuperscript{th} column of $\sigma^{\infty}(\splus{0})$ is periodic with period 
	\begin{align*}
		\begin{array}{c} \left.\sigma^l(\splus{0})\right|_
			{\left[1,2^{l}\right]\times\{j'+1\}}\\
		\left.\sigma^l\left(\tiler{a}\right)\right|_
		{\left[1,2^{l}\right]\times\{j'+1\}}\\
		\left.\sigma^l(\sminus{1})\right|_
		{\left[1,2^{l}\right]\times\{j'+1\}}\\
		\end{array},
	\end{align*}
	and the $j$\textsuperscript{th} column of $\frakf(\tm)$ is periodic with period (due to Lemma \ref{lem:consistent} this holds also for $j=2^{l+2}$)
	\begin{equation}\label{eq:jth-column}
		\begin{aligned}
		\begin{array}{c}
			\left.\kappa_1\left(\sigma^l(\splus{0})\right)\right|_
			{\left[1,2^{l+2}\right]\times\{j+1\}}\\
			\left.\kappa_1\left(\sigma^l\left(\tiler{a}\right)\right)\right|_
			{\left[1,2^{l+2}\right]\times\{j+1\}}\\
			\left.\kappa_1\left(\sigma^l(\sminus{1})\right)\right|_
			{\left[1,2^{l+2}\right]\times\{j+1\}}\\
		\end{array}.
		\end{aligned}
	\end{equation}
	Recall that by definition $\frakf_{j,j+1}=1$ for every $j\geq0$. Since the period in \eqref{eq:jth-column} is of length $3\cdot2^{l+2}$, this implies that $\frakf_{j+3\cdot2^{l+2}i,j+1}=1$ for every $i\geq0$.
	Therefore,
	\ignore{By Theorem \ref{thm:main2},
	\[
	\left\{\frakf_{i,j}(\tm)\right\}_{i=1,j=1}^\infty=\kappa_1\left(\sigma^\infty(\splus{0})\right).
	\]}
	it follows from \eqref{eq:follows} and Lemma~\ref{lem:gluing} with $d'=d''=0$, that for every $d\geq0$,
	\begin{align*}
		\lim_{k\to\infty}\esc_{t^j\tm}(d,k) &= 	\lim_{k\to\infty}\esc_{\left.\frakf(\tm)\right|_{\left[j,3\cdot2^{l+2}k+j\right]\times\{j+1\}}}(d) \\
		&=
		\esc_{\left.\frakf(\tm)\right|_{\left[j,3\cdot2^{l+2}+j\right]\times\{j+1\}}}(d)\,.
	\end{align*}
	The border rows between $\sigma^l(\splus{0})$ and $\sigma^l\left(\tiler{a}\right)$ were analysed in the proof of Theorem \ref{thm:main2} in Section \ref{sec:tiling}. It follows from the discussion there that every sequence of consecutive $0$'s of length strictly greater than $5$ in this column is contained either within
	$$
	\left.\kappa_1\left(\sigma^l\left(\tiler{a}\right)\right)\right|_{\left[1,2^{l+2}\right]\times\{j+1\}},\\
	$$
	or within
	\begin{equation*}
		\begin{aligned}
			\begin{array}{c}
				\left.\kappa_1\left(\sigma^l(\sminus{1})\right)\right|_{\left[1,2^{l+2}\right]\times\{j+1\}}\\
				\left.\kappa_1\left(\sigma^l(\splus{0})\right)\right|_{\left[1,2^{l+2}\right]\times\{j+1\}}
			\end{array}.
		\end{aligned}
	\end{equation*}
	Therefore, applying Lemma~\ref{lem:gluing} twice gives that
	\begin{equation*}
	\esc_{\left.\frakf(\tm)\right|_{\left[j,3\cdot2^{l+2}+j\right]\times\{j+1\}}}(d) = 
	\esc_{\left.\frakf(\tm)\right|_{\left[2^{l+2},4\cdot2^{l+2}\right]\times\{j+1\}}}(d) \\\,,
	\end{equation*}
	for every $d\geq5$. Finally, recalling \eqref{eq:sigmal8} and applying Lemma~\ref{lem:gluing} again proves that 
	$$
	\esc_{\left.\frakf(\tm)\right|_{\left[2^{l+2}+1,4\cdot2^{l+2}\right]\times\{j+1\}}}(d) = \frac{1}{3}\esc_{l,j}(d) + \frac{2}{3}\esc_{l+1,j}(d)\,.
	$$
\end{proof}

\begin{proof}[Proof of Theorem \ref{thm:main}]
	By Lemma~\ref{lem: kappa1 of r0} and Lemma~\ref{lem:basic}, every $d,l\geq0$ satisfy
	\[
	\esc_{l,2^{l+1}}(d) = \frac{2\max\left\{2^{l+1}-2-d,0\right\}}{2^{l+2}}\,,
	\]
	and if $d\geq2$ then 
	\[
	\esc_{l,2^{l+2}}(d) = 0\,.
	\]
	Therefore, applying Theorem~\ref{thm:escape} with $j=2^{l+2}$ and taking the limit $l\to\infty$ gives \eqref{eq:main}.
\end{proof}

\subsection{Escape of mass along more general shifts}
The theory developed thus far allows for a slightly more detailed analysis of the escape of mass along subsequences of shifts of $\tm$.
In fact, applying 
Corollary \ref{cor:key} to \eqref{eq:sigmal8} gives that for every $l\geq0$,
\begin{align}
	\sigma^{l+2}\left(\tiler{a}\right)&=
	\mat{
        \sigma^{l+1}(\sminus{1})&\sigma^{l+1}(\splus{2})\\
        \sigma^{l+1}(\splus{0})&\sigma^{l+1}(\sminus{3})
    } 
    \nonumber
    \\[2pt]
    \label{eq:twice}
    &=
    \mat{
        \sigma^{l}\left(\tiler{a}\right)&\sigma^{l}(\splus{1})&\sigma^{l}(\sminus{2})&\sigma^{l}\left(\tiler{a}\right)
        \\
        \sigma^{l}(\sminus{1})&\sigma^{l}(\tilec{1})&\sigma^{l}(\tilec{2})&\sigma^{l}(\splus{2})
        \\
        \sigma^{l}(\splus{0})&\sigma^{l}(\tilec{0})&\sigma^{l}(\tilec{3})&\sigma^{l}(\sminus{3})
        \\
        \sigma^{l}\left(\tiler{a}\right)&\sigma^{l}(\sminus{0})&\sigma^{l}(\splus{3})&\sigma^{l}\left(\tiler{a}\right)
    }.
\end{align}
\noindent
Note that the submatrix given by the coordinates $(1,2),(1,3);(4,2),(4,3)$ of \eqref{eq:twice} is precisely 
\begin{equation*}
	\sigma^{l+1}\left(\tiler{b}\right)=
	\mat{
		\sigma^{l}(\splus{1})&\sigma^{l}(\sminus{2})\\
		\sigma^{l}(\sminus{0})&\sigma^{l}(\splus{3})
	}.
\end{equation*}
This enables a simple recursive formula for $\esc_{l+2,j}$ in terms of $\esc_{l+1,j}$ and $\esc_{l,j}$. But first, a lemma that relates between the escape of mass of $\sigma^{l}\left(\tiler{a}\right)$ and $\sigma^{l}\left(\tiler{b}\right)$ is required. Define 
\[
\overline{e}_{l,j}(d) \defeq \esc_{\left.\kappa_1\left(\sigma^l\left(\tiler{b}\right)\right)\right|_{\left[1,2^{l+2}\right]\times\{j+1\}}}(d)\,.
\]

\begin{lemma}
	For every $d\geq3$, $l\geq0$ and $0 \leq j \leq 2^{l+2}$,
	$$\overline{\esc}_{l,j}(d) = \esc_{l,j}(d)\,.$$
\end{lemma}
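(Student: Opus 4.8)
The plan is to deduce the statement from a more symmetric assertion about all of $\Sig$, proved by induction on the level of the substitution. Since $\tiler{b} = \iota\tiler{a}$, Corollary~\ref{cor:key} gives $\sig^l(\tiler{b}) = \iota\sig^l(\tiler{a})$, and by Definition~\ref{def: action on matrices} the element $\iota$ acts on $\Mat_{2^l}(\Sig)$ simply by applying $\iota$ to every entry ($\tgeo_\iota$ is trivial and $\psi^l(\iota)=\iota$). Passing through $\ka_1$, this says that $\ka_1(\sig^l(\tiler{b}))$ is obtained from $\ka_1(\sig^l(\tiler{a}))$ by replacing every $4\times4$ coding block $\ka_1(t)$ by $\ka_1(\iota t)$, without moving anything. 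Hence the lemma is the case $s=\tiler{a}$ of the claim that for every $s\in\Sig$, every $l\ge0$, every $0\le j\le 2^{l+2}$ and every $d\ge3$,
$$
\esc_{\left.\ka_1(\sig^l(\iota s))\right|_{[1,2^{l+2}]\times\{j+1\}}}(d)=\esc_{\left.\ka_1(\sig^l(s))\right|_{[1,2^{l+2}]\times\{j+1\}}}(d)\,.
$$
This is vacuous when $\iota s=s$, i.e.\ for $s\in\{\tilo\}\cup\{\tilec{i}\}$, so only the orbits $G\splus{0}$ and $G\tiler{a}$ are at issue.

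The mechanism is that for $d\ge3$ the escape of mass of a finite sequence only sees long runs of zeros. Combining Lemmas~\ref{lem:basic} and~\ref{lem:gluing}, a sequence $P$ of length $L$ whose maximal runs of zeros have lengths $r_1,r_2,\dots$ satisfies $\esc_P(d)=\frac1L\sum_i 2\max\{r_i/2-d,0\}$, and for $d\ge3$ only runs with $r_i\ge7$ contribute. So the displayed claim amounts to: column $j+1$ of $\ka_1(\sig^l(s))$ and of $\ka_1(\sig^l(\iota s))$ have the same multiset of zero-run lengths $\ge7$. I prove this by induction on $l$; the base case $l=0$ is trivial because the columns in question then have length $2^2=4$, admitting no zero-run of length $\ge7$, so both escapes vanish.

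For the inductive step write $\sig(s)=\smallmat{s_1&s_2\\ s_3&s_4}$, so that by Convention~\ref{conv: overlap2} $\ka_1(\sig^{l+1}(s))=\br{\begin{smallmatrix}\ka_1(\sig^l(s_1))&\ka_1(\sig^l(s_2))\\ \ka_1(\sig^l(s_3))&\ka_1(\sig^l(s_4))\end{smallmatrix}}$, and similarly for $\iota s$ with blocks $\iota s_i=(\iota s)_i$. Fix $j$ with $0\le j\le2^{l+2}$ (the range $j\ge2^{l+2}$ is symmetric, via the $s_2,s_4$ blocks). Column $j+1$ of $\ka_1(\sig^{l+1}(s))$ is the overlap-$1$ concatenation of column $j+1$ of $\ka_1(\sig^l(s_1))$ with column $j+1$ of $\ka_1(\sig^l(s_3))$. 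The key point is that no maximal zero-run of length $\ge7$ crosses the shared row. When $s\in G\splus{0}$ the pair formed by the last row of $\sig^l(s_1)$ and the first row of $\sig^l(s_3)$ is a middle horizontal strip of $\sig^{l+1}(s)$, which by Lemmas~\ref{lem:sigmal0} and~\ref{lem:sigmal8} is periodic with an explicit period whose image under $\ka_1$ contains no $6$ consecutive zeros (this is precisely what Case~2 of the proof of Theorem~\ref{thm:main2} verifies, extended to the whole orbit by $\rho$-equivariance, Lemma~\ref{lem: kappa equivariance}); since $\ka_1(\sig^{l+1}(s))$ satisfies the frame relations (Claim~\ref{claim: induction}), a zero-run crossing that row lies inside a diagonally-aligned square window, whose intersection with the two shared rows is a zero-run there of length $\le5$, forcing the window to have size $\le6$ and hence every vertical chord to have length $\le6<7$. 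When $s\in\{\tiler{a},\tiler{b}\}$, Lemma~\ref{lem: kappa1 of r0} shows the only window crossing the shared (central) row is the large central diamond, whose size depends only on $l$, so it contributes the very same zero-run to column $j+1$ of $\ka_1(\sig^{l+1}(s))$ and of $\ka_1(\sig^{l+1}(\iota s))$. In both cases one now re-associates the concatenation at a nonzero entry, exactly as in the proof of Theorem~\ref{thm:escape}, and applies Lemma~\ref{lem:gluing}: the escape along column $j+1$ of $\ka_1(\sig^{l+1}(s))$ is an average, with weights determined only by the block sizes, of the escapes along column $j+1$ of $\ka_1(\sig^l(s_1))$ and of $\ka_1(\sig^l(s_3))$; the inductive hypothesis applied to $s_1,s_3$ (and to $\iota s_1,\iota s_3$) makes these pieces agree with their $\iota$-counterparts, and the weights are identical, so the averages agree. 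This closes the induction.

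The step I expect to be the main obstacle is the assertion that no long zero-run crosses the shared row, together with the bookkeeping of the overlap conventions: one must check that each possibility for the lower-left block $s_3$ (namely $s_3=\tilo$, $s_3\in G\tilec{0}$, $s_3\in G\splus{0}$, $s_3\in\{\tiler{a},\tiler{b}\}$) is covered either by the middle-strip verifications already carried out in Cases~1--3 of the proof of Theorem~\ref{thm:main2} or by the explicit diamond structure of Lemma~\ref{lem: kappa1 of r0}, and that the single window that genuinely straddles the shared row in the $\tiler{a},\tiler{b}$ cases is the size-$l$ central window common to both pictures. Once this is in place, the rest is the routine computation already packaged in Lemmas~\ref{lem:basic} and~\ref{lem:gluing}.
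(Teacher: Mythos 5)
Your reduction of the lemma to ``the two columns have the same long zero-runs'' is the right phenomenon, and it is essentially what the paper proves; but the inductive mechanism you propose does not close. The decisive step is the application of Lemma~\ref{lem:gluing} at the junction between the $s_1$- and $s_3$-blocks: that lemma requires the trailing and leading zero-runs at the cut to have length at most $d$, and the middle strips contain zero-runs of length up to $5$ --- which is exactly why the paper only proves Theorem~\ref{thm:escape} for $d\ge5$, whereas the present lemma is claimed for $d\ge3$. Your repair, re-associating the cut at a nonzero entry, breaks the induction: after re-association the two pieces are no longer the columns of $\ka_1(\sig^l(s_1))$ and $\ka_1(\sig^l(s_3))$ to which the inductive hypothesis applies, and, more seriously, the admissible cut indices (hence the weights in the average) need not coincide for $s$ and $\iota s$, because the bit patterns near the junction genuinely differ --- $\ka(\splus{0})\ne\ka(\sminus{0})$, and the junction periods such as $\splus{0}\,\sminus{3}\,\tiler{b}$ over $\tiler{a}\,\splus{1}\,\sminus{2}$ versus their $\iota$-images code to different strings over $\ft$. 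Knowing only that the two escape values, or even the two multisets of long run lengths, agree at level $l$ does not tell you that the long runs sit at the \emph{same positions}, which is what the junction analysis needs. Two further gaps: $\esc_P(d)$ for a column restricted to $[1,2^{l+2}]$ is not a function of the interior maximal runs of length $\ge 2d$ alone, since runs touching the ends of the window contribute one-sidedly, so boundary runs of length $4$, $5$, $6$ already matter for $d=3$ and fall below your $\ge7$ threshold; and extending the claim to $s\in G\splus{0}$ forces a comparison of $\sig^l(\splus{0})$ with $\sig^l(\sminus{0})$, whose subdiagonals are the Thue--Morse word and its complement, a case that is neither needed nor analysed.

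The paper avoids all of this by proving a positional statement directly: every diagonally-aligned square of $0$'s of size at least $3$ in $\ka_1\left(\sig^l\left(\tiler{a}\right)\right)$ or $\ka_1\left(\sig^l\left(\tiler{b}\right)\right)$ is the central diamond of a sub-block $\ka_1\left(\sig^{l'}\left(\tiler{a}\right)\right)$ or $\ka_1\left(\sig^{l'}\left(\tiler{b}\right)\right)$ located at an entry of $\sig^{l-l'}\left(\tiler{a}\right)$, respectively $\sig^{l-l'}\left(\tiler{b}\right)$, lying in $\set{\tiler{a},\tiler{b}}$ (Lemma~\ref{lem: kappa1 of r0} together with an induction). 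Since $\iota$ acts entrywise and preserves the set $\set{\tiler{a},\tiler{b}}$, these positions and the corresponding diamond sizes are identical for $\tiler{a}$ and $\tiler{b}$, so the two tilings have identical large windows and therefore identical column height functions above level $3$, which is what the lemma asserts. If you want to rescue your induction, the hypothesis you must carry is this positional one, not the equality of the escape values.
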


\begin{proof}
	The result follows from the fact that both $\kappa_1\left(\sigma^l\left(\tiler{a}\right)\right)$ and $\kappa_1\left(\sigma^l\left(\tiler{b}\right)\right)$ contain the same diagonally-aligned square patterns of $0$'s of side length $d$ for every $d\geq3$. 
	Recall Lemma~\ref{lem: kappa1 of r0} and note that it has an analogue with 
	$\tiler{b}$ replacing $\tiler{a}$.
	In fact, for every $2\leq l'\leq l$, every diagonally-aligned square of $0$'s of size $2^{l'}-2 < d \leq 2^{l'+1}-2$ which is contained in either $\kappa_1\left(\sigma^l\left(\tiler{a}\right)\right)$ or $\kappa_1\left(\sigma^l\left(\tiler{b}\right)\right)$ is contained in the $0$'s which lie in the centre of $\kappa_1\left(\sigma^{l'}\left(\tiler{a}\right)\right)$and $\kappa_1\left(\sigma^{l'}\left(\tiler{b}\right)\right)$, respectively, for some $\tiler{a}$ or $\tiler{b}$ contained in $\sigma^{l-l'}\left(\tiler{a}\right)$ and $\sigma^{l-l'}\left(\tiler{b}\right)$. This can be proved by induction. Since $\tiler{b}=\iota\tiler{a}$ and $$\sigma^{l-l'}\left(\tiler{b}\right)=\iota\sigma^{l-l'}\left(\tiler{a}\right) = \tinn_\iota\left(\sigma^{l-l'}\left(\tiler{a}\right)\right),$$ 
	it follows that $\sigma^{l-l'}\left(\tiler{a}\right)(m,n)\in\{\tiler{a},\tiler{b}\}$ if and only if $\sigma^{l-l'}\left(\tiler{b}\right)(m,n)\in\{\tiler{a},\tiler{b}\}$, which finishes the proof.
\end{proof}
	
\begin{theorem}\label{thm:mkj}
	For every $d\geq0$, $l\geq0$, and $0 \leq j \leq 2^{l+2}$,
	\begin{equation}\label{eqn:m_k,j2}
		\esc_{l,j}(d)= \esc_{l,2^{l+2} - j}(d)\,,
	\end{equation}
	and if $d\geq5$ and $l\geq2$, then
	\begin{equation}\label{eqn:m_k,j}
		\esc_{l,j}(d)=
		\begin{cases}
			\frac{1}{2}\esc_{l-1,j}(d)+\frac{1}{2}\esc_{l-2,j}(d)&0\leq j\leq 2^{l}
			\\[10pt]
			\frac{2^l-\max\{d+2-\left(j-2^{l}\right),0\}}{2^{l+1}}+\frac{1}{2}\esc_{l-1,j-2^{l}}(d)&2^{l}\leq j\leq 2^{l+1}
		\end{cases}.
	\end{equation}
\end{theorem}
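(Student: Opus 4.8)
The plan is to prove both identities by unfolding the substitution one level (for \eqref{eqn:m_k,j2}) or two levels (for \eqref{eqn:m_k,j}) using the block decompositions \eqref{eq:sigmal8} and \eqref{eq:twice}, and then to compute the escape of mass column-by-column by repeated application of the gluing lemma (Lemma~\ref{lem:gluing}) together with the basic building-block computation (Lemma~\ref{lem:basic}) and the structural description of $\kappa_1(\sigma^l(\tiler{a}))$ in Lemma~\ref{lem: kappa1 of r0}. The only genuinely new ingredient beyond the material already developed is a careful bookkeeping of \emph{where} the long runs of $0$'s sit along a fixed column of $\kappa_1(\sigma^{l+1}(\tiler{a}))$ or $\kappa_1(\sigma^{l+2}(\tiler{a}))$, so that the hypotheses of Lemma~\ref{lem:gluing} (namely that each of the two glued pieces ends, resp.\ begins, with at most $d$ consecutive $0$'s, for $d\geq 5$) are satisfied at the seams between the blocks.

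First I would prove the symmetry \eqref{eqn:m_k,j2}. Recall that $\eta\tiler{a}=\tiler{b}$ and $\iota\eta\tiler{a}=\tiler{a}$, so by Corollary~\ref{cor:key} we have $\sigma^l(\tiler{a}) = \iota\eta\,\sigma^l(\tiler{a})$. The geometric part $\tgeo_\eta$ of this action reflects the square $\sigma^l(\tiler{a})$ along its anti-diagonal, which sends column $j+1$ to row $2^{l+2}-j$ in reversed order (as in the proof of Lemma~\ref{lem:sigmal0}), and then column $2^{l+2}-j$ of the reflected array is column $j+1$ of the original read from bottom to top. Since reversing a finite sequence and applying the fixed involution $\iota$ (which acts on entries) does not change its height function up to reindexing, and since $\kappa_1$ is $\rho$-equivariant in the sense of Lemma~\ref{lem: kappa equivariance} (whence it interacts predictably with $\eta=\rho^{-1}\cdot(\text{reflection})$ after passing to $\kappa$), the escape of mass along column $j+1$ equals that along column $2^{l+2}-j$. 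Here one must check, using Lemma~\ref{lem:consistent}, that the one-column overlap convention does not spoil the reflection symmetry; this is the one place where Lemma~\ref{lem: kappa equivariance} and Convention~\ref{conv: overlap} need to be invoked rather than the bare $4$-coding.

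Next I would prove \eqref{eqn:m_k,j} by cases on $j$. For $0\leq j\leq 2^l$, expand $\sigma^{l}(\tiler{a})$ via \eqref{eq:sigmal8}: column $j+1$ of $\kappa_1(\sigma^{l}(\tiler{a}))$ is, after the overlap convention, the column of $\kappa_1(\sigma^{l-1}(\sminus{1}))$ stacked over that of $\kappa_1(\sigma^{l-1}(\splus{0}))$, evaluated at the appropriate index. Using Corollary~\ref{cor:key} to rewrite $\sigma^{l-1}(\sminus{1})=\rho\,\sigma^{l-1}(\sminus{0})$ and, if needed, expanding one more level, one sees that column $j+1$ is a concatenation of two columns each of the shape ``column of $\kappa_1(\sigma^{l-2}(\tiler{a}))$'' up to the $G$-action, together with strips of $0$'s whose lengths are controlled by Lemma~\ref{lem: kappa1 of r0}. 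Since each block has length $2^{l+1}$ out of a total $2^{l+2}$ (wait: rather, the relevant normalized lengths are $2^{l+1}$ and $2^{l+1}$, giving weights $1/2$ and $1/2$), Lemma~\ref{lem:gluing} with $d\geq 5$ yields $\esc_{l,j}=\tfrac12\esc_{l-1,j}+\tfrac12\esc_{l-2,j}$; the check that the seams carry at most $d$ zeros for $d\geq 5$ is exactly the analysis already carried out in the proof of Theorem~\ref{thm:main2} and reused in the proof of Theorem~\ref{thm:escape}. For $2^l\leq j\leq 2^{l+1}$, the relevant column passes through the central diamond of $0$'s described in Lemma~\ref{lem: kappa1 of r0}: the top half of column $j+1$ is a stretch of $0$'s of length $2^l - |j-2^l|$ flanked by $1$'s (this is where the $\max\{d+2-(j-2^l),0\}$ term comes from, via Lemma~\ref{lem:basic} applied with $r=2(2^l-(j-2^l))$ or the appropriate value, minus the $d$ cutoff), while the bottom half is a column of $\kappa_1(\sigma^{l-1}(\tiler{b}))$, whose escape of mass equals that of $\kappa_1(\sigma^{l-1}(\tiler{a}))$ for $d\geq 3$ by the preceding lemma; gluing these two halves of equal normalized weight $1/2$ gives the second line of \eqref{eqn:m_k,j}.

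The main obstacle, and the step deserving the most care, is the bookkeeping at the seams: one must verify that every run of consecutive $0$'s of length $>5$ appearing in a fixed column of $\kappa_1(\sigma^{l+1}(\tiler{a}))$ or $\kappa_1(\sigma^{l+2}(\tiler{a}))$ lies entirely within one of the blocks of the decomposition (so that Lemma~\ref{lem:gluing} applies with the stated overlap hypothesis), and that the $0$-runs straddling a block boundary have length at most $5$. This is precisely the kind of finite verification already performed in Case~2 and Case~3 of the proof of Theorem~\ref{thm:main2-reduced}, using the periodicity of the middle horizontal and vertical strips (Lemmas~\ref{lem:sigmal0} and \ref{lem:sigmal8}) and the fact that these periods already appear in $\sigma^4(\splus{0})$ or in Figure~\ref{fig:thm_flat}; I would simply cite that analysis rather than redo it. The remaining arithmetic — tracking the index $j$ through the halving maps $j\mapsto j$ on each block, identifying the length of the central $0$-strip as a function of $j-2^l$, and assembling the weighted averages — is routine once the seam analysis is in place.
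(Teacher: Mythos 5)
Your overall strategy --- unfold $\sig^l(\tiler{a})$ via \eqref{eq:sigmal8} and \eqref{eq:twice} and apply Lemmas~\ref{lem:gluing} and \ref{lem:basic} --- is the paper's strategy, but two of your concrete steps are incorrect and would not yield the stated formulas. For \eqref{eqn:m_k,j2} you use the stabiliser element $\iota\eta$. Its geometric part is the reflection along the anti-diagonal, which exchanges rows and columns: the invariance $\sig^l(\tiler{a})=\iota\eta\,\sig^l(\tiler{a})$ only identifies column $j+1$ with a certain \emph{row} read backwards, and your assertion that ``column $2^{l+2}-j$ of the reflected array is column $j+1$ of the original read from bottom to top'' is false. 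To compare column $j+1$ with column $2^{l+2}-j+1$ you need a stabiliser element whose geometric part maps columns to columns; the paper uses $\rho^2\tiler{a}=\tiler{a}$ (a $180\degree$ rotation) together with Lemma~\ref{lem: kappa equivariance}.

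For the range $2^l\le j\le2^{l+1}$ your geometric picture is wrong. Setting $j'=j-2^l$, column $j+1$ runs through the second block-column of \eqref{eq:twice} (with $l-2$ in place of $l$), i.e.\ through $\sig^{l-2}(\splus{1}),\sig^{l-2}(\tilec{1}),\sig^{l-2}(\tilec{0}),\sig^{l-2}(\sminus{0})$. Hence the long run of $0$'s sits in the \emph{middle} of the column and has length roughly $2^{l+1}+2j'$, increasing in $j$ --- not a stretch of length $2^l-|j-2^l|$ in the top half --- and the copy of a column of $\ka_1\left(\sig^{l-1}\left(\tiler{b}\right)\right)$ is not the bottom half but the non-contiguous union of the top ($\splus{1}$) and bottom ($\sminus{0}$) blocks; this is precisely why the paper records that the blocks in positions $(1,2),(1,3);(4,2),(4,3)$ of \eqref{eq:twice} assemble into a copy of $\sig^{l+1}\left(\tiler{b}\right)$. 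The correct computation compares column $j+1$ with column $j'+1$ of $\ka_1\left(\sig^{l-1}\left(\tiler{b}\right)\right)$ (they agree except that the central $0$-run is lengthened by $2^{l+1}$), applies Lemma~\ref{lem:basic} to both runs, and invokes $\overline{\esc}_{l-1,j'}=\esc_{l-1,j'}$; your version, with Lemma~\ref{lem:basic} applied to a run of length about $2\left(2^l-j'\right)$, produces a term that decreases in $j'$ where \eqref{eqn:m_k,j} requires one that increases. (Your first case is also loose: the weights arise as $\tfrac14+\tfrac12+\tfrac14$ from the four block-rows of the first block-column of \eqref{eq:twice}, the middle two forming a column of $\sig^{l-1}(\tiler{a})$, rather than from two blocks of weight $\tfrac12$ each.)
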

\begin{proof}
	Equation \eqref{eqn:m_k,j2} follows immediately from the fact that $\rho^2\tiler{a}=\tiler{a}$ combined with 
	Lemma~\ref{lem: kappa equivariance}. Equation \eqref{eqn:m_k,j} follows by inspecting \eqref{eq:twice} and applying lemmas \ref{lem:gluing} and \ref{lem:basic}.
\end{proof}

In order to complement 
Theorem \ref{thm:main}, 
an example of a subsequence which exhibits full escape of mass is provided:
\begin{corollary}\label{cor}
    Let 
    \begin{equation}\label{eq:jl}
    	j_l=\sum_{l'=0}^{\left\lceil\frac{l}{2}\right\rceil}2^{\max\{l-2l',0\}}\,.
    \end{equation} 
    Then, every $d\geq0$ satisfies
    \[
    \lim_{l\to\infty}\lim_{k\to\infty}\esc_{t^{j_l}\tm}(d,k)=1
    \]
\end{corollary}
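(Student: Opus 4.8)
The plan is to exhibit the sequence $j_l$ from~\eqref{eq:jl} as a sequence along which the columns of $\frakf(\tm)$ are "as degenerate as possible", namely columns passing through the large diamonds of $\tilo$'s in $\sig^\infty(\splus{0})$, and to push the recursion of Theorem~\ref{thm:mkj} to the limit.

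First I would identify $j_l$ with a column index that lands on (or extremely near) the centre of an inflated copy of $\tiler{a}$. Concretely, using~\eqref{eq:twice}, the column index $2^{l+1}$ inside $\sig^{l+2}(\tiler{a})$ is (up to an additive bounded error coming from the coding overlap) the centre column of the $\tiler{a}$ sitting at block position $(4,1)$ of~\eqref{eq:twice}, and more generally, iterating this self-similar structure, the column $j_l$ is obtained by repeatedly choosing the "centre of the bottom-left $\tiler{a}$" branch. The explicit formula $j_l=\sum_{l'=0}^{\lceil l/2\rceil}2^{\max\{l-2l',0\}}$ is precisely what one gets from summing the offsets $2^{l}, 2^{l-2}, 2^{l-4},\dots$ that locate this nested sequence of centres. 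So the first step is to check, by induction on $l$ using~\eqref{eq:twice} and Lemma~\ref{lem: kappa equivariance}, that $\esc_{l,j_l}(d)$ is governed by the "$j$ near $2^l$" branch of~\eqref{eqn:m_k,j}, i.e. that plugging $j=j_l$ in repeatedly always lands us in the regime $2^l\le j\le 2^{l+1}$ with $j-2^l = j_{l-2}$ (after the shift), so that
\[
\esc_{l,j_l}(d)=\frac{2^l-\max\{d+2-(j_l-2^l),0\}}{2^{l+1}}+\frac12\esc_{l-1,j_{l-2}}(d).
\]
Here one must check the bookkeeping: $j_l - 2^l = j_{l-2}$ is immediate from~\eqref{eq:jl}, and for $l$ small the base cases are a finite check.

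Second, I would take $l\to\infty$ in this recursion. For fixed $d$, once $l$ is large enough that $2^l > d+2$, the first term $\frac{2^l-\max\{d+2-(j_l-2^l),0\}}{2^{l+1}}$ tends to $\tfrac12$ (the $\max$ term is eventually a bounded quantity divided by $2^{l+1}$, hence negligible, provided $j_l-2^l=j_{l-2}\ge 0$, which it is). Thus if we set $\ell(d):=\liminf_{l\to\infty}\esc_{l,j_l}(d)$ and $L(d):=\limsup_{l\to\infty}\esc_{l,j_l}(d)$, the recursion gives $\ell(d)\ge \tfrac12+\tfrac12\ell(d)$ and $L(d)\le \tfrac12+\tfrac12 L(d)$, whence $\ell(d)=L(d)=1$; so $\lim_{l\to\infty}\esc_{l,j_l}(d)=1$ for every $d$. (One small point: the index of the inner escape term is $j_{l-2}$, a shift of the outer index by $2^l$, and $j_{l-2}\le 2^{l-1}$, so the inner term is again a genuine escape-of-mass value in the allowed range $[0,2^{(l-2)+2}]=[0,2^l]$; this needs to be verified from~\eqref{eq:jl}, but it is elementary.)

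Finally, I would connect $\esc_{l,j_l}(d)$ to the actual escape of mass $\lim_{k\to\infty}\esc_{t^{j_l}\tm}(d,k)$. By Theorem~\ref{thm:escape}, for $d\ge 5$ one has $\lim_{k\to\infty}\esc_{t^{j}\tm}(d,k)=\tfrac13\esc_{l,j}(d)+\tfrac23\esc_{l+1,j}(d)$ for any valid $(l,j)$; choosing $l$ so that $j=j_l$ and also (by Theorem~\ref{thm:mkj} or by re-running the argument one level up) controlling $\esc_{l+1,j_l}(d)$, both convex-combination terms tend to $1$, so $\lim_{l\to\infty}\lim_{k\to\infty}\esc_{t^{j_l}\tm}(d,k)=1$ for $d\ge 5$; and since $\esc_{t^j\tm}(d,k)$ is non-increasing in $d$, the equality $=1$ for all $d\ge 0$ follows. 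The main obstacle I anticipate is the first step: pinning down exactly which column index $j_l$ corresponds to the nested-centres branch and verifying that the recursion~\eqref{eqn:m_k,j} stays in the $2^l\le j\le 2^{l+1}$ case at every level — this is a careful but routine induction using~\eqref{eq:twice}, Lemma~\ref{lem: kappa1 of r0} for the diamond of $\tilo$'s, and Lemma~\ref{lem: kappa equivariance} to handle the $\rho$-rotations of the entries; everything after that is a clean limit argument.
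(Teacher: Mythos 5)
Your strategy coincides with the paper's: run the recursion of Theorem~\ref{thm:mkj} along $j_l$, deduce $\lim_{l\to\infty}\esc_{l,j_l}(d)=1$, then transfer via Theorem~\ref{thm:escape} (with monotonicity in $d$ covering $0\le d<5$). However, the central self-consistency step has a genuine gap. Applying the second branch of \eqref{eqn:m_k,j} to $\esc_{l,j_l}(d)$ (valid since $j_l=2^l+j_{l-2}$) produces the remainder term $\tfrac12\esc_{l-1,j_{l-2}}(d)$. The index here is $j_{l-2}$, not $j_{l-1}$, so this is not a member of the diagonal sequence $\bigl\{\esc_{l',j_{l'}}(d)\bigr\}_{l'}$, and your assertion that the iteration ``always lands in the regime $2^{l'}\le j\le2^{l'+1}$'' already fails at the very next step: $j_{l-2}\le2^{l-1}$, so at level $l-1$ the index falls in the \emph{first} branch of \eqref{eqn:m_k,j}. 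Consequently the inequality $\ell(d)\ge\tfrac12+\tfrac12\ell(d)$ for $\ell(d)=\liminf_{l}\esc_{l,j_l}(d)$ does not follow from the one-step recursion you wrote down; as stated, the recursion does not close.

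The repair is exactly what the paper does and needs two ingredients absent from your argument: one application of the \emph{first} branch of \eqref{eqn:m_k,j}, giving $\esc_{l-1,j_{l-2}}(d)=\tfrac12\esc_{l-2,j_{l-2}}(d)+\tfrac12\esc_{l-3,j_{l-2}}(d)$, and the reflection symmetry \eqref{eqn:m_k,j2}, which together with $j_{l-2}=2^{l-1}-j_{l-3}$ converts the last term into the diagonal term $\esc_{l-3,j_{l-3}}(d)$. Since $\esc_{l-2,j_{l-2}}(d)$ is already diagonal, one obtains $\esc_{l,j_l}(d)=\tfrac12\bigl(1-o(1)\bigr)+\tfrac14\esc_{l-2,j_{l-2}}(d)+\tfrac14\esc_{l-3,j_{l-3}}(d)$, and now the liminf argument does close: $\ell(d)\ge\tfrac12+\tfrac12\ell(d)$, hence $\ell(d)=1$. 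The same symmetry is also needed in your final step: Theorem~\ref{thm:escape} yields a convex combination of $\esc$ at two consecutive levels with the \emph{same} index $j_l$, and the off-diagonal term ($\esc_{l-1,j_l}(d)$ in the paper's normalisation, $\esc_{l+1,j_l}(d)$ in yours) must again be identified with a diagonal term via \eqref{eqn:m_k,j2} and $j_l=2^{l+1}-j_{l-1}$ before the limit can be taken.
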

\begin{proof}
    Equation \eqref{eq:jl} implies that every $l\geq2$ satisfies
    \begin{equation}\label{eq:jl-symmetry}
    	j_l = 2^{l+1} - j_{l-1} = 2^l + j_{l-2}\,.
    \end{equation}
    Fix any $d\geq5$. For every $l\geq3$, Theorem \ref{thm:mkj} gives 
     \begin{align*}
    	\esc_{l,j_l}(d) 
    	={}&
    	\frac12\left(1-\frac{\max\{d+2-\left(j-2^{l}\right),0\}}{2^l}\right) + 
    	\\
    	&
    	\frac{1}{2}\esc_{l-1,j_{l-2}}(d) 
    	\\[10pt]
    	={}&
  		\frac12\left(1-\frac{\max\{d+2-\left(j-2^{l}\right),0\}}{2^l}\right) + 
  		\\
  		&
  		\frac{1}{4}\esc_{l-2,j_{l-2}}(d) +
  		\frac{1}{4}\esc_{l-3,j_{l-2}}(d)
  		\\[10pt]
  		={}&
  		\frac12\left(1-\frac{\max\{d+2-\left(j-2^{l}\right),0\}}{2^l}\right) +
  		\\
  		&
  		\frac18\left(1-\frac{\max\{d+2-\left(j-2^{l-2}\right),0\}}{2^{l-2}}\right)+
  		\\
  		&
  		\;\frac18\esc_{l-2,j_{l-2}}(d) +
  		\;\frac14\esc_{l-3,j_{l-3}}(d)\,.
    \end{align*}
    Hence,
    $$\lim_{l\to\infty}\esc_{l,j_l}(d)=1\,.$$
    Since $2^l\leq j_l\leq 2^{l+1}$, Theorem \ref{thm:escape} together with \eqref{eqn:m_k,j2} and \eqref{eq:jl-symmetry} imply that
    \begin{align*}
    \lim_{k\to\infty}\esc_{t^{j_l}\tm}(d,k) &= \frac{1}{3}\esc_{l-1,j_l}(d) + \frac{2}{3}\esc_{l,j_l}(d)
    \\
    &=
    \frac{1}{3}\esc_{l-1,j_{l-1}}(d) + \frac{2}{3}\esc_{l,j_l}(d) 
    \xrightarrow[{l\to\infty}]{}1
    \,.
    \end{align*}
\end{proof}


\paragraph{}
\renewcommand{\abstractname}{Acknowledgements}
\begin{abstract}
	EN wishes to thank Elon Lindenstrauss for his encouragement to pursue research of the utility of the frame relations in homogeneous dynamics over function fields, and for an inspiring discussion about rotating number walls by $45\degree$. EN wishes to thank his father, Gozal Nesharim, for his interest in this project and for a useful discussion about the self-similarity of \eqref{eq:tm_flat_array} and $3$-dimensional solid realisations of it. The authors thank Omri Solan for his observation that the limit in \eqref{eq:main} is $1$ if $\liminf$ is replaced with $\limsup$, during the workshop ``Distribution of orbits:
	Arithmetics and Dynamics'' where this work was presented (see Corollary~\ref{cor} for an explicit subsequence which attains this $\limsup$).
\end{abstract}

\bibliography{Ref} 
\bibliographystyle{alpha}
\end{document}